\title[GV invariants of local curves]
{Degree two Gopakumar--Vafa invariants of local curves}
\author{Ben Davison and Naoki Koseki}
\date{}
\address{School of Mathematics and Maxwell Institute for Mathematical Sciences,
University of Edinburgh,
Edinburgh EH9 3FD,
UK}
\email{ben.davison@ed.ac.uk}
\address{The University of Liverpool, Mathematical Sciences Building, Liverpool, L69 7ZL, UK.}
\email{koseki@liverpool.ac.uk}
\theoremstyle{plain}
\newtheorem{thm}{Theorem}[section]
\newtheorem{prop}[thm]{Proposition}
\newtheorem{def-prop}[thm]{Definition-Proposition}
\newtheorem{lem}[thm]{Lemma}
\newtheorem{cor}[thm]{Corollary}
\newtheorem*{thm*}{Theorem}
\theoremstyle{definition}
\newtheorem{defin}[thm]{Definition}
\newtheorem{conj}[thm]{Conjecture}
\newtheorem*{ACK}{Acknowledgements}
\theoremstyle{remark}
\newtheorem{rmk}[thm]{Remark}
\newtheorem{ex}[thm]{Example}
\DeclareMathOperator{\ch}{ch}
\DeclareMathOperator{\td}{td}
\DeclareMathOperator{\rk}{rk}
\DeclareMathOperator{\id}{id}
\newcommand{\dR}{\mathbf{R}}
\newcommand{\bP}{\mathbb{P}}
\newcommand{\bC}{\mathbb{C}}
\newcommand{\bR}{\mathbb{R}}
\newcommand{\bQ}{\mathbb{Q}}
\newcommand{\bZ}{\mathbb{Z}}
\newcommand{\mcC}{\mathcal{C}}
\newcommand{\mcF}{\mathcal{F}}
\newcommand{\mcH}{\mathcal{H}}
\newcommand{\mcM}{\mathcal{M}}
\newcommand{\mcO}{\mathcal{O}}
\newcommand{\mcV}{\mathcal{V}}
\DeclareMathOperator{\Hom}{Hom}
\DeclareMathOperator{\Tot}{Tot}
\DeclareMathOperator{\Pic}{Pic}
\DeclareMathOperator{\Ext}{Ext}
\DeclareMathOperator{\Hilb}{Hilb}
\DeclareMathOperator{\Sym}{Sym}
\DeclareMathOperator{\IC}{IC}
\DeclareMathOperator{\GW}{GW}
\DeclareMathOperator{\AJ}{AJ}
\DeclareMathOperator{\PGL}{PGL}
\DeclareMathOperator{\Chow}{Chow}
\DeclareMathOperator{\Perv}{Perv}
\DeclareMathOperator{\pt}{pt}
\DeclareMathOperator{\Gr}{Gr}
\DeclareMathOperator{\tr}{tr}
\DeclareMathOperator{\mH}{H}
\DeclareMathOperator{\Crit}{Crit}
\DeclareMathOperator{\sco}{\!\colon\!\!\!}
\begin{document}
\maketitle

\begin{abstract}
We investigate the Gopakumar--Vafa (GV) theory of local curves, namely, the total spaces of rank two vector bundles with canonical determinant on smooth projective curves. 
Under a certain genericity condition on the rank two bundles, we propose a general mechanism to compute the degree two GV invariants of local curves. 
In particular, we determine all the degree two GV invariants when the base curve has genus two. Combined with previous work by Bryan and Pandharipande, we obtain the GV/GW correspondence in this case. 
When the base curve has genus greater than two, we calculate GV invariants for some extremal genera, providing evidence for the GV/GW conjecture for curves of higher genus.
\end{abstract}

\setcounter{tocdepth}{1}
\tableofcontents

\section{Introduction}
\subsection{Motivation and results}
Throughout the paper, we work over the complex number field $\bC$. 
We investigate local enumerative invariants of a Calabi--Yau (CY) $3$-fold around a smooth curve. 
A model for this enumerative geometry is provided by \textit{local curves}. 
Here, a local curve is a CY $3$-fold of the form $\Tot_C(N)$, where $C$ is a smooth projective curve and $N$ is a rank two vector bundle on $C$ with $\det(N) \cong \omega_C$. 
Local curves are a fundamental object of study in enumerative geometry 
\cite{bp01, bp06, bp08, mon22, op10}. 
In particular, they have played a key role in the recent proof of the Gromov--Witten (GW)/Donaldson--Thomas (DT) correspondence for \textit{any} smooth projective CY $3$-fold \cite{par24}. 
Pardon reduced the problem to the case of (equivariant) local curves, and 
the GW/DT correspondence for the latter has been proved by directly computing the GW and DT invariants of local curves \cite{bp08, op10}. 
Moreover, if we impose a certain rigidity condition on the bundle $N$, 
the local GW invariants agree with 
those on a projective CY $3$-fold with curve class $r[C]$, where $C \subset X$ has normal bundle $N$ \cite{bp01, bp05}. 

In this paper we study another curve counting theory on a CY $3$-fold, that is \textit{Gopakumar--Vafa (GV) theory}. 
GV theory was originally introduced in physics \cite{GV98}, as a way to count curves in CY $3$-folds. 
Recently, Maulik--Toda \cite{mt18} proposed a mathematically rigorous definition of GV invariants, building on the previous works by Hosono--Saito--Takahashi \cite{hst01} and Kiem--Li \cite{kl16}. 
See the next subsection for the review of Maulik--Toda's work. 
GV theory is also expected to be equivalent to GW theory (the \textit{GV/GW correspondence conjecture}): 
\begin{conj} \label{conj:GVGW}
Let $X$ be a smooth projective CY $3$-fold. 
For a curve class $\beta \in H_2(X, \bZ)$ and a non-negative integer $g \in \bZ_{\geq 0}$, 
let $\GW_{g, \beta} \in \bQ$ 
(resp. $n_{g, \beta} \in \bZ$) 
denote the GW (resp. GV) invariant of $X$. 
Then
\begin{equation} \label{eq:GVGW}
\sum_{g, \beta}\GW_{g, \beta}\lambda^{2g-2}t^\beta
=\sum_{g, \beta, k \geq 0} \frac{n_{g, \beta}}{k}\left(
2\sin\left(\frac{k\lambda}{2}
\right) 
\right)^{2g-2}t^{k\beta}. 
\end{equation}
\end{conj}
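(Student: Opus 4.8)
The general conjecture is open, so the realistic goal is to \emph{verify} \eqref{eq:GVGW} for the classes treated here, namely $\beta = 2[C]$ on a local curve $X = \Tot_C(N)$ with $C$ of genus two. The plan is to compute the two sides of \eqref{eq:GVGW} independently and match them. For the left-hand side I would invoke the Gromov--Witten computation of Bryan--Pandharipande \cite{bp08}, which determines $\GW_{g, 2[C]}$ for all $g$ in closed form. The problem then reduces to producing the integers $n_{g, 2[C]}$ from Maulik--Toda's definition \cite{mt18} and checking that, after the substitution encoded by the right-hand side of \eqref{eq:GVGW}, the two generating series agree. Writing $\beta_0 = [C]$, the coefficient of $t^{2\beta_0}$ on the right mixes the $k=1$ contribution of $n_{g, 2\beta_0}$ with the $k=2$ contribution of $n_{g, \beta_0}$, so I would first fix the degree-one invariants from the $t^{\beta_0}$ coefficient and then isolate the degree-two invariants.

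First I would set up the moduli space $M$ of one-dimensional Gieseker-stable sheaves on $X$ with support class $2[C]$ and fixed Euler characteristic, together with the Hilbert--Chow morphism $\pi \colon M \to \Chow$ to the variety parametrizing the underlying one-cycles. Since $X$ is non-compact, I would work equivariantly for the torus scaling the fibres of $N$, so that the relevant cohomology is finite-dimensional. The invariants $n_{g, 2[C]}$ are then read off from the perverse (Leray) filtration on $\pi_* \phi_M$, where $\phi_M$ is the Maulik--Toda perverse sheaf on $M$ (reducing to $\IC_M$ where $M$ is smooth): concretely, $n_{g, 2[C]}$ records the multiplicity with which the weight-$g$ representation of the $\mathfrak{sl}_2$ supplied by the decomposition theorem appears.

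The geometric heart of the computation is that degree-two sheaves on $\Tot_C(N)$ are governed by \emph{spectral data}: a stable sheaf supported on $2[C]$ is, generically, a line bundle on a double cover $\widetilde{C} \to C$ cut out inside the total space, and the genericity hypothesis on $N$ is exactly what forces the family of such spectral covers to be well-behaved (reduced discriminant, controlled degenerate members). I would therefore stratify $M$ by the type of spectral cover --- smooth double cover, nodal cover, or the non-reduced locus $2\cdot C$ --- compute the summands of the decomposition theorem stratum by stratum, and assemble the perverse filtration. Feeding the resulting $n_{g, 2[C]}$ together with the Bryan--Pandharipande invariants $\GW_{g, 2[C]}$ into \eqref{eq:GVGW} and comparing coefficients then completes the verification in the genus-two case; the higher-genus statements would follow the same template for the extremal genera, where only the top and bottom strata contribute.

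The main obstacle is the explicit control of $\pi_* \phi_M$ over the locus of degenerate spectral covers, where $M$ is singular and the decomposition theorem produces genuinely new summands. This requires identifying the intersection cohomology sheaves of the strata and their multiplicities, and it is precisely here that the genericity condition on $N$ does its work: without it the nodal and non-reduced loci would contribute uncontrolled correction terms, and the integers $n_{g, 2[C]}$ could not be pinned down sharply enough to match the Gromov--Witten side.
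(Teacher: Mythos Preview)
Your outline correctly identifies that one must compute the Maulik--Toda invariants $n_{g,2[C]}$ and compare with Bryan--Pandharipande, and your handling of the degree-one versus degree-two mixing on the right-hand side of \eqref{eq:GVGW} is fine. But the core of the proposal --- ``stratify $M$ by the type of spectral cover (smooth, nodal, non-reduced $2\cdot C$) and compute the decomposition theorem stratum by stratum'' --- skips the step that actually makes the computation possible. The object $\phi_M$ is Joyce's perverse sheaf, defined by gluing locally defined vanishing cycle sheaves; it is \emph{not} $\IC_M$ in general, and $M_N(2,1)$ is singular. You offer no mechanism for computing $\phi_M$ over the non-reduced stratum $2\cdot C$, where the results of \cite{ms13,my14} (which require reduced spectral curves) do not apply; saying ``the genericity condition on $N$ does its work'' does not supply one. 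Working equivariantly for the fibrewise $\bC^*$-action does not resolve this either.

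The paper's approach differs precisely here. Using the Kinjo--Masuda global critical chart (Theorem~\ref{thm:km}, Corollary~\ref{cor:rank2dchart}), one realises $M_N(2,1)\cong\Crit(j)\subset M_L(2,1)$ inside the \emph{smooth} moduli space of $L$-twisted Higgs bundles, so that $\phi_M\cong\phi_j(\IC_{M_L(2,1)})$ for an explicit function $j=\tilde f\circ h$ factoring through the Hitchin base. The vanishing/nearby cycle triangle (Lemma~\ref{lem:van=nearby+stalk}, Proposition~\ref{prop:GV=L+U}) then splits the invariant into two pieces: the contribution of the full Hitchin system $M_L(2,1)\to\tilde B_L$, computed from the known generators-and-relations description of $\mH^*(M_L(2,1))$ \cite{dCHM12,ht03,ht04}, and the contribution of a \emph{nearby hyperplane} $U_\varepsilon\subset B_L$ over which every spectral curve is reduced, so that \cite{ms13,my14,mau16,os12} apply fibrewise. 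This manoeuvre entirely sidesteps the non-reduced locus that your stratification would force you to confront, and replaces direct analysis of the singular space $M_N(2,1)$ with cohomology of a smooth Higgs moduli space plus a HOMFLY/Euler-characteristic calculation over strata of $U_\varepsilon$.
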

%If we set $\beta=r[C]$, then the GV/GW correspondence is proved for $r=1$ or $g(C) \leq 1$ under a certain rigidity condition 
If $X$ is a generic local curve, and we set $\beta=r[C]$, then in \cite{mt18} the GV/PT correspondence (and thus the GV/DT correspondence, via \cite{BriDTPT,TodaDTPT}) is proved for $r=1$ or $g(C)\leq 1$ \cite{hst01, mt18}.  
The number $r$ is called the \textit{degree} of the invariant.  As a first step toward a proof of the GV/GW correspondence conjecture, 
we investigate the \textit{degree two} GV invariants of a local curve in this paper. 
The following are our main results: 
\begin{thm} \label{thm:introgenustwo}
Let $C$ be a smooth projective curve of genus two, 
$L \in \Pic^3(C)$ a generic line bundle. 
Let $[N] \in \bP \Ext^1(L, L^{-1} \otimes \omega_C)$ 
be a generic rank two vector bundle on $C$, and let $X=\Tot_C(N)$. 
Then we have 
\[
n_{g, 2[C]}(X)=\begin{cases}
8 & (g=3), \\
-2 & (g=2), \\
0 & (\text{otherwise}). 
\end{cases}
\]
In particular, Conjecture \ref{conj:GVGW} holds in this case. 
\end{thm}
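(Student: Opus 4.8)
The plan is to realise the degree two invariant through the Maulik--Toda formalism applied to an explicit moduli space, and then to read off the numbers from a support map. Fix a moduli space $M$ of stable one-dimensional sheaves $F$ on $X$ with $[\Supp F]=2[C]$ and Euler characteristic coprime to $2$, together with the Hilbert--Chow morphism $\pi\colon M\to B$ onto the Chow variety of curves in class $2[C]$. Pushing forward along $X\to C$ identifies such an $F$ with a rank two sheaf $E=\pi_*F$ on $C$ equipped with a homomorphism $\phi\colon E\to E\otimes N$ whose induced action of $\wedge^2 N^\vee$ vanishes, i.e. a $\Sym(N^\vee)$-module structure; thus $B$ is the space of spectral data, its points being the double covers $\widetilde{C}\to C$ cut out inside $\Tot_C(N)$, and the fibre of $\pi$ over a smooth spectral curve is the corresponding Jacobian. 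The first task is therefore to use the genericity of $L$ and of $[N]\in\bP\Ext^1(L,L^{-1}\otimes\omega_C)$ to prove that $M$ is smooth and that $\pi$ exhibits $M$ as a family of compactified Jacobians over $B$.

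The second step is the geometry of $B$. By Riemann--Hurwitz a smooth connected double cover of the genus two curve $C$ has genus three, which matches the top value $g=3$ in the statement, and the arithmetic genus of every member of the family in class $2[C]$ equals three. I would stratify $B=\bigsqcup_g B_g$ by geometric genus: the open stratum $B_3$ consists of smooth genus three spectral curves, $B_2$ of one-nodal ones, and the remaining locus of further degenerate cycles such as the non-reduced $2C$. Here the genericity hypotheses are essential, since they control $H^0(N)$ (generically forcing the absence of sections, hence of a splitting into two copies of $C$) and the discriminant of the family of spectral curves, making the degeneration as transverse as possible so that only the geometric genera $3$ and $2$ actually occur. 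I would then compute the intersection cohomology of the strata of $B$ and of the associated compactified Jacobians.

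With $M$ smooth, the mechanism of the earlier sections reduces the Maulik--Toda BPS sheaf to a shift of the constant sheaf on $M$, so the invariants $n_{g,2[C]}$ are extracted from the perverse Leray filtration of $R\pi_*\bQ_M$ via the relative hard Lefschetz $\mathfrak{sl}_2$-action. Concretely, the contributions are organised by geometric genus: the generic genus three locus $B_3$ governs $n_3$, the nodal locus $B_2$ governs $n_2$, and the absence of strata of geometric genus $\le 1$ forces the remaining $n_g$ to vanish. The expected outcome is that $n_3=8$ appears as a signed Euler characteristic of the smooth spectral locus and $n_2=-2$ as the contribution of its one-nodal degenerations. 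Finally, to conclude Conjecture \ref{conj:GVGW} in this case, I would substitute $n_{3,2[C]}=8$ and $n_{2,2[C]}=-2$ into the right-hand side of \eqref{eq:GVGW} and match the resulting $\lambda$-series, order by order, against the degree two genus two Gromov--Witten series of Bryan--Pandharipande.

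The main obstacle is the control of the boundary of the support map. Over the smooth locus $B_3$ everything is governed by abelian-variety fibres and is essentially formal, but the value $n_2=-2$ and the vanishing of all other $n_g$ depend on understanding the compactified Jacobians over the nodal and non-reduced spectral curves, and on showing, via the decomposition theorem, that these loci produce exactly the claimed perverse summands and no spurious ones. Establishing the smoothness of $M$ and pinning down the precise singularity type of $B$ under the genericity assumptions, rather than the subsequent cohomological bookkeeping, is where the hypotheses on $L$ and $[N]$ must be exploited most delicately.
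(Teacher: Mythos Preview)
Your plan rests on two assumptions that both fail for generic $N$, and the paper's argument proceeds by an entirely different mechanism.

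First, the moduli space $M=M_N(2,1)$ is \emph{not} smooth. For a Calabi--Yau threefold the expected dimension of the sheaf moduli is zero, and the Maulik--Toda invariant is defined using Joyce's perverse sheaf $\phi_M$, not the shifted constant sheaf. The paper never attempts to show $M$ is smooth; instead it invokes the Kinjo--Masuda theorem (Theorem~\ref{thm:km}, Corollary~\ref{cor:rank2dchart}) to write $M_N(2,1)$ \emph{globally} as the critical locus of a function $j$ on the genuinely smooth moduli space $M_L(2,1)$ of $L$-twisted Higgs bundles, with $\phi_M\cong\phi_j(\IC_{M_L})$. Everything is then computed on the $L$-side.

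Second, the Chow base $B$ you want to stratify is essentially a point. The genericity hypothesis on $N$ is precisely $2$-rigidity: $\mH^0(\widetilde C,l^*N)=0$ for every degree-two stable map $l\colon\widetilde C\to C$. This forbids any reduced curve in class $2[C]$ inside $\Tot_C(N)$, so the only cycle is the non-reduced $2C$ on the zero section. There is no family of smooth genus-three double covers, no nodal stratum, and your Riemann--Hurwitz count (which presumes an \'etale cover) never applies. In the paper's $L$-twisted model the smooth spectral curves have genus $4g(C)-2=6$, not $3$.

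What the paper actually does is split the vanishing-cycle computation via Proposition~\ref{prop:GV=L+U} into (i) the global perverse cohomology of $M_L(2,1)$, evaluated from the known generators-and-relations description of $\mH^*(M_L(2,1))$ (Proposition~\ref{prop:GVtwHiggs}), and (ii) a nearby-hyperplane contribution $n_g(U_\varepsilon)$. The latter requires stratifying $U_\varepsilon\subset\mH^0(L^{\otimes 2})$ by the partition $\lambda\vdash 6$ recording the multiplicities of the branch divisor, computing $e(U_\varepsilon\cap S_\lambda)$ for each of ten partitions via degeneracy-locus (de Jonqui\`eres) calculations on products of $C$, and weighting by fibrewise contributions extracted from HOMFLY polynomials of torus links (Propositions~\ref{prop:strHitchin}, \ref{prop:GVfiber}, \ref{prop:GVnearby}). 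The values $8$ and $-2$ emerge only after these two large tables cancel against each other; they are not visible as Euler characteristics of genus strata in any base for $\Tot_C(N)$.
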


\begin{thm} \label{thm:introhigher}
Let $C$ be a smooth projective curve of arbitrary genus, and let
$L \in \Pic^{2g(C)-1}(C)$ be a generic line bundle. 
Let $[N] \in \bP \Ext^1(L, L^{-1} \otimes \omega_C)$ 
be a generic rank two vector bundle on $C$, and let $X=\Tot_C(N)$. 
Then
\[
n_{g, 2[C]}(X)=\begin{cases}
0 & (g \geq 4g(C)-2), \\
-2^{2g(C)-3} & (g=g(C)), \\
0 & (g \leq g(C)-1). 
\end{cases}
\]
In particular, the GV/GW correspondence holds for the above range of $g$. 
\end{thm}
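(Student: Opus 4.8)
The plan is to realize the relevant moduli space $M:=M_{2[C]}(X)$ of stable one-dimensional sheaves $F$ on $X$ with $[\Supp F]=2[C]$ and fixed Euler characteristic, to study the Hilbert--Chow support morphism $\pi\colon M\to \Chow_{2[C]}(X)=:B$, and to extract $n_{g,2[C]}(X)$ from the perverse filtration of $\dR\pi_*\IC_M$ in the sense of Maulik--Toda. First I would set up the spectral description: since $p\colon X=\Tot_C(N)\to C$ is affine, $E:=p_*F$ is a rank-two sheaf on $C$ equipped with a $\Sym^\bullet N^\vee$-module structure, equivalently a field $\phi\colon E\to E\otimes N$ subject to the integrability relation $\phi\wedge\phi=0\in H^0(\End(E)\otimes\omega_C)$, and conversely such integrable pairs recover $F$. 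The genericity hypotheses enter here decisively. For generic $N$ one computes $H^0(N)=0$, so that $\tr\phi\in H^0(N)$ vanishes and every field is traceless; moreover the only section of $X\to C$ of class $[C]$ is the zero section $C_0$. Consequently every support curve is either an integral double cover of $C$ inside $\Tot_C(N)$ or a thickening of $C_0$, which eliminates all reducible configurations and is what makes the stratification of $B$ tractable.

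I would then pin down the top of the invariant. Over the open stratum of $B$ the support is a smooth, integral double cover $\nu\colon\tilde C\to C$; because $\det N\cong\omega_C$ its branch divisor has degree $4g(C)-4$, so by Riemann--Hurwitz $g(\tilde C)=4g(C)-3$, and the corresponding fibre of $\pi$ is a torsor under $\Pic(\tilde C)$ of dimension $4g(C)-3$. Hence $\dR\pi_*\IC_M$ has its perverse filtration concentrated in degrees at most $4g(C)-3$, so the perverse pieces governing $n_{g,2[C]}$ for $g\geq 4g(C)-2$ vanish and $n_{g,2[C]}(X)=0$ in that range. This gives the first case.

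For the two remaining cases I would exploit the action of the Jacobian $\Pic^0(C)$ on $M$ by twisting $F\mapsto F\otimes p^\ast \mathcal L$ with $\mathcal L\in\Pic^0(C)$. Since the generic stabiliser is finite, this realises an abelian variety of dimension exactly $g(C)$ acting on $M$, which forces the Gopakumar--Vafa generating series to be divisible by $(y^{1/2}-y^{-1/2})^{2g(C)}$; consequently $n_{g,2[C]}(X)=0$ for $g\leq g(C)-1$, which is the third case. The value at $g=g(C)$ is then the leading coefficient of this series, i.e.\ the signed Euler characteristic of the quotient of the bottom stratum by the $\Pic^0(C)$-action. On that stratum $\phi$ is nilpotent, the support cycle is $2[C_0]$, and I would classify the stable nilpotent pairs by the line sub-bundle $\im\phi$ together with the discrete square-root (theta-characteristic) data on $C$ required to split the associated double-cover structure; counting this finite data yields the factor $2^{2g(C)-3}$, while the perverse degree and the $(y^{1/2}-y^{-1/2})$ normalisation supply the sign, giving $n_{g(C),2[C]}(X)=-2^{2g(C)-3}$. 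This is consistent with the value $-2$ obtained in the genus-two case of Theorem~\ref{thm:introgenustwo}.

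The hard part will be this last computation: controlling $\dR\pi_*\IC_M$ and its perverse decomposition over the non-generic strata of $B$, where the spectral cover becomes singular or non-reduced, and showing that these contributions do not perturb the two extreme genera. Equivalently, the main obstacle is to make the Maulik--Toda perverse (BPS) sheaf explicit near the central fibre of $\pi$ precisely enough to determine both the power $2^{2g(C)-3}$ and its sign; here I expect the genericity of $L$ and $N$, which should guarantee smoothness of $M$ and simple degenerations of the spectral covers, to be exactly what makes the analysis feasible. Finally, comparing the resulting series with the Gromov--Witten side computed by Bryan--Pandharipande yields the claimed GV/GW correspondence in the stated range of $g$.
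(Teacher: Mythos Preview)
Your proposal rests on computing the perverse filtration of $\dR\pi_*\IC_M$, but this is not the object Maulik--Toda use: the definition (and the paper's Definition~\ref{GVdef}) involves $\dR\pi_*\phi_M$ with $\phi_M$ Joyce's vanishing-cycle perverse sheaf. These agree only when $M$ is smooth, and $M=M_N(2,1)$ is not: it is a $d$-critical locus of virtual dimension zero on a CY threefold, yet by your own account it carries positive-dimensional Jacobian fibres, so your closing hope that genericity of $L,N$ ``should guarantee smoothness of $M$'' fails. Consequently all three steps---the perverse-amplitude bound from the generic fibre dimension, the $\Pic^0(C)$-divisibility, and the central-fibre count---are being applied to the wrong sheaf.

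The paper avoids touching $\phi_M$ directly. Via Kinjo--Masuda (Theorem~\ref{thm:km}, Corollary~\ref{cor:rank2dchart}) one has $M_N(2,1)\cong\Crit(j)\subset M_L(2,1)$ \emph{globally}, with $M_L(2,1)$ smooth, so that $\phi_M=\phi_j(\IC_{M_L})$; pushing forward along the Hitchin map for $L$ and splitting the vanishing cycle into a stalk-at-$0$ term and a nearby-hyperplane term gives $n_{g,2[C]}(\Tot_C(N))=n_{g,2[C]}(\Tot_C(L))+n_g(U_\varepsilon)$ (Proposition~\ref{prop:GV=L+U}). The first summand is computed from the known cohomology of the \emph{smooth} space $M_L(2,1)$: the $\Pic^0(C)$-factor gives the bottom vanishing, the degree of the perverse polynomial gives the top $-1$ at $g=4g(C)-2$, and the value $-2^{2g(C)-3}$ at $g=g(C)$ arises as $-\chi(\hat M_L(2,1))$ via the explicit Poincar\'e polynomial of \cite{cdp11}, not from any theta-characteristic count. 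For the second summand the spectral curves over $U_\varepsilon$ are all reduced, so the HOMFLY recursion controls the extreme degrees: $n_g(U_\varepsilon)$ vanishes for $g\leq 2g(C)-2$ and $g\geq 4g(C)-1$, and equals $+1$ at $g=4g(C)-2$, exactly cancelling the $-1$ from the $L$-side. Your $\Pic^0(C)$-divisibility heuristic is morally right, but it has to be run on $M_L$, where it is classical, not on $\IC_{M_N}$.
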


\subsection{GV invariants of a local curve 
and strategy of the proofs}
First, we review the proposed definition of Maulik and Toda. 
Let $X$ be a smooth quasi-projective CY $3$-fold. We fix a curve class $\beta \in H_2(X, \bZ)$. 
Let $M_X(\beta, 1)$ be the moduli space of slope stable (with respect to a fixed ample divisor) one-dimensional sheaves $E$ on $X$ with $[E]=\beta$ and $\chi(E)=1$. 
We have the Hilbert--Chow morphism: 
\[
\pi \colon M_X(\beta, \chi) \to \Chow_\beta(X). 
\]
Then Maulik and Toda \cite{mt18} defined the invariants $n_{g, \beta}$ via the following identity: 
\begin{equation}
\label{eq:defGVintro}
\sum_{i \in \bZ}\chi({}^p\mcH^i(\dR \pi_* \phi_{M}))q^i=
\sum_{g \geq 0}n_{g, \beta}\left(q^{\frac{1}{2}}+q^{-\frac{1}{2}}
\right)^{2g}, 
\end{equation}
where $\phi_M$ on the left hand side 
denotes \textit{Joyce's perverse sheaf} 
on $M_X(\beta, 1)$. 
%\textcolor{blue}{
Here, and throughout, if $\mathcal{P}$ is a constructible complex of sheaves on a space $T$, we define $\chi(\mathcal{P})\coloneqq \sum_{i\in\mathbb{Z}}(-1)^i\dim (\mH^i(T,\mathcal{P}))$.%}  
One of the main difficulties in this approach to GV theory is the complicated gluing procedure of the construction of $\phi_M$: 
it is defined as a gluing of locally defined perverse sheaves of vanishing cycles. 

Let us focus on the case of a local curve $X=\Tot_C(N)$. 
In this case, the curve class $\beta$ can be written as $\beta=r[C]$ for some integer $r >0$, where $[C]$ is the class of the zero section. 
We define $M_N(r, 1) \coloneqq M_X(r[C], 1)$. 
In this case, Kinjo--Masuda's result \cite{km24} simplifies the situation substantially. 
They proved that $M_N(r, 1)$ can be \textit{globally} written as a critical locus (see also \cite{kk23}): 
\[
M_N(r, 1) \cong \{dj=0\} \subset 
M_L(r, 1) \xrightarrow{j} \bC, 
\]
where $L$ is a line bundle of $\deg(L) \gg 0$, $M_L(r, 1)$ denotes the moduli space of slope stable sheaves on $\Tot_C(L)$, 
and the function $j$ is constructed as follows: the Hilbert--Chow morphism is identified with the \textit{Hitchin fibration} 
\[
h \colon M_L(r, 1) \to 
\tilde{B}_L=\bigoplus_{i=1}^r \mH^0(C, L^{\otimes i}). 
\]
Then $j$ is the composition of $h$ and a linear function $\tilde{f} \colon \tilde{B}_L \to \bC$. 
Moreover, we have $\phi_M \cong \phi_j(\IC_{M_L(r, 1)})$, and since $h$ is projective,
the left hand side of (\ref{eq:defGVintro}) is equal to 
\[
\sum_{i \in \bZ} \chi({^p\mcH^i(\phi_{\tilde{f}}(\dR h_* \IC_{M_L(r, 1)}))})q^i. 
\]
Here and throughout the paper, if $Y$ is a smooth equi-dimensional variety we denote by $\IC_Y\coloneqq \bQ_Y[\dim(Y)]$ the constant perverse sheaf on $Y$.  Using the equality above, and the distinguished triangle relating vanishing and nearby cycles, we will divide 
the computations of $n_{g, r[C]}(X)$ into the following two parts 
(see Proposition \ref{prop:GV=L+U}): 
\begin{enumerate}
\item GV type invariants of
$h \colon M_L(r, 1) \to \tilde{B}_L$, 
\item GV type invariants of 
$h|_{U_\varepsilon} \colon 
h^{-1}(U_\varepsilon) \to U_\varepsilon$, 
where $U_{\varepsilon}\coloneqq \{\tilde{f}=\epsilon\} \subset \tilde{B}_L$ ($0 < \varepsilon \ll 1$). 
\end{enumerate}
When $r=2$, we will give a general strategy to compute both (1) and (2). 
For (1), we will use the explicit description of $\mH^*(M_L(2, 1), \bQ)$ by generators and relations \cite{dCHM12, ht03, ht04}. 

For (2), we will calculate the fibrewise contributions and then integrate. 
Namely, we have a stratification of $U_\varepsilon$ given by the types of singularities of the spectral curves. 
Since the nearby hyperplane $U_\varepsilon$ avoids the origin and we assume $r=2$, the spectral curves $\widetilde{C}$ over 
points in $U_\varepsilon$ are all \textit{reduced} (more precisely, we first need to remove the trace part $\mH^0(L)$). Hence, by \cite{ms13, my14}, 
the fibrewise contribution can be computed by the Euler characteristics of the Hilbert schemes $\Hilb^n(\widetilde{C})$. 
The latter is then computed by certain HOMFLY polynomials \cite{mau16, os12}. 
It remains to compute the Euler characteristic of each stratum $S \subset U_\varepsilon$.
Our main idea is to realise (a resolution of) the closure of $S$ as a degeneracy locus inside a product of copies of $C$. 
These varieties are closely related to generalized de {J}onqui\`eres divisors \cite{far23, ung21}. 

In principal, the above strategy enables us to compute the GV invariants $n_{g, 2[C]}(\Tot_C(N))$ for an arbitrary curve $C$. 
However, the difficulty of the calculations in each step grows rapidly when we increase the genus $g(C)$. The authors currently do not know how to resolve this issue. 

\begin{rmk}
To generalise our strategy to the case $r>2$, there are at least two difficulties: 
\begin{enumerate}
\item For the first part, we do not have an explicit description of $\mH^*(M_L(r, 1))$ for $r \geq 3$. Instead, we may first need to prove the GV/PT correspondence for $\Tot_C(L \oplus L^{-1} \otimes \omega_C)$, 
and then use the result of \cite{mon22}. 
\item For the second part, a recursion formula determining the HOMFLY polynomials becomes complicated for $r \geq 3$. 
However, the serious difficulty is that we cannot assume the spectral curves over $U_\varepsilon$ to be reduced. 
\end{enumerate}
\end{rmk}

\subsection{Relation to previous work}
The GV/PT (and hence GV/GW) correspondence 
was previously proved in the following cases: 
\begin{itemize}
\item $X=\Tot_C(N)$ with $g(C) \leq 1$, $N$ is super-rigid, and $\beta$ is arbitrary \cite{hst01}, 
\item $X=\Tot_C(\omega_C \oplus \mcO_C)$ and $\beta$ is arbitrary \cite{cdp14, hmms22, ms24}, 
\item $X=\Tot_S(\omega_S)$ and $\beta$ is arbitrary, where $S$ is a K3 surface \cite{my22}, 
\item $X=\Tot_S(\omega_S)$ and $\beta$ is an irreducible curve class, where $S$ is any smooth projective surface \cite{mt18}. 
\end{itemize}

For super-rigid rational and elliptic curves in the first case, the key fact proved in \cite{hst01} is that the moduli space $M_X(\beta, 1)$ is smooth and projective. 

For $X=\Tot_C(\omega_C \oplus \mcO_C)$, 
as observed in \cite{cdp14}, 
the GV/PT correspondence follows from the P=W conjecture, which has recently been proved \cite{hmms22, ms24}. 

Shen--Yin \cite{my22} investigated a compact analogue of the P=W phenomenon for Lagrangian fibrations. As an application, they established the GV/PT correspondence for a local K3 surface. 

For an arbitrary local surface $\Tot_S(\omega_S)$, 
Maulik and Toda \cite{mt18} used a similar idea as in this paper, namely, they described the moduli space $M_X(\beta, 1)$ as an explicit critical locus, locally over $\Chow_\beta(X)$, so that they can use the results of \cite{ms13, my14}.

\subsection{Plan of the paper}
The paper is organised as follows. 
In Section \ref{sec:pre}, we collect some preliminary results. In particular, we define the GV invariants for local curves and prove that they are computed by the contributions from twisted Higgs bundles and the nearby hyperplane (Proposition \ref{prop:GV=L+U}). 
In Section \ref{sec:genustwo}, we prove Theorems \ref{thm:introgenustwo} and \ref{thm:introhigher}. 
In Section \ref{sec:highergenus}, we prove Theorem \ref{thm:introhigher}. 
In Appendix \ref{sec:GW}, we recall some known results in GW theory that we use in the main text for verifying cases of the GV/GW correspondence. 

\begin{ACK}
We would like to thank 
Jim Bryan for introducing this problem to us. 
We would also like to thank Tasuki Kinjo, Davesh Maulik and Yukinobu Toda for related discussions.  We also thank the anonymous referee for many helpful suggestions and corrections.

B.D is supported by a Royal Society university research fellowship \newline [URF/R/221040].

N.K. is supported by the Engineering and Physical Sciences Research Council [EP/Y009959/1]. 
\end{ACK}

\section{Preliminaries} \label{sec:pre}
\subsection{Twisted Higgs bundles}
Let $C$ be a smooth projective curve. 

\begin{defin}
\begin{enumerate}
Let $\mcV$ be a vector bundle on $C$. 
\item A \textit{$\mcV$-twisted Higgs bundle} is a pair $(E, \phi)$ consisting of 
a vector bundle $E$ on $C$ and a homomorphism $\phi \colon E \to E \otimes \mcV$ 
such that $\phi \wedge \phi=0$. 
\item A $\mcV$-twisted Higgs bundle $(E, \phi)$ is \textit{$\mu$-stable} 
if the following condition holds: 
for any subsheaf $F \subset E$ with $\rk(F) < \rk(E)$ and $\phi(F) \subset F \otimes \mcV$, 
we have 
\[
\frac{\deg(F)}{\rk(F)} < \frac{\deg(E)}{\rk(E)}. 
\]
\end{enumerate}
\end{defin}

We denote by $\mcM_{\mcV}(r, d)$ (resp. $M_{\mcV}^{st}(r, d)$) 
the moduli stack of $\mcV$-twisted Higgs bundles 
(resp. the moduli space of $\mu$-stable $\mcV$-twisted Higgs bundles) 
of rank $r$ and degree $d$. 
%\textcolor{blue}{[I added the reference for the following theorem]}
\begin{thm}\cite[Theorem 5.6, Proposition 5.7]{km24}
\label{thm:km}
Let $N$ be a rank two vector bundle on $C$ with $\det(N) \cong \omega_C$. 
Choose an exact sequence 
\[
0 \to L^{-1} \otimes \omega_C \to N \to L \to 0 
\]
for some line bundle $L$ with $\deg(L) \geq 2g(C)-1$. 
Let $f \colon \mH^0(L^{\otimes 2}) \to \bC$ be a linear function 
corresponding to the class $[N] \in \Ext^1(L, L^{-1} \otimes \omega_C)$ 
via Serre duality: 
$Ext^1(L, L^{-1} \otimes \omega_C) \cong \mH^0(L^{\otimes 2})^\vee$. 
Then we have an isomorphism 
\begin{equation*} 
\mcM_N(r, d) \cong \Crit(j) \subset \mcM_L(r, d), 
\end{equation*}
where $j \colon \mcM_L(r, d) \to \bC$ is the function 
defined by $j((E, \phi))=
f(\tr(\phi^2))$. 
\end{thm}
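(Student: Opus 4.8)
The plan is to prove the isomorphism at the level of the moduli \emph{stacks} by comparing their deformation theories, handling the underlying classical identification and its d-critical (obstruction-theoretic) refinement together. Write $M \coloneqq L^{-1}\otimes\omega_C$, so that $N$ is an extension $0\to M\to N\to L\to 0$ of class $[N]\in\Ext^1(L,M) = H^1(C,\omega_C\otimes L^{-2})$, and recall that under Serre duality this class is precisely the functional $f\in H^0(L^{\otimes 2})^\vee$. Given an $L$-twisted Higgs bundle $(E,\phi)$, composing a putative $N$-twisted field $\Phi\colon E\to E\otimes N$ with the surjection $N\twoheadrightarrow L$ recovers $\phi$; conversely, the first task is to show that $(E,\phi)$ lies in $\Crit(j)$ if and only if $\phi$ lifts to such a $\Phi$ with $\Phi\wedge\Phi = 0$, functorially in families. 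This produces a morphism $\mcM_N(r,d)\to\Crit(j)$ together with its inverse on points.

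The key computation is that of the differential $dj$. Varying $\phi$ by $\dot\phi$ gives $\delta\tr(\phi^2) = 2\tr(\phi\,\dot\phi)\in H^0(L^{\otimes 2})$, the deformations of $E$ contributing nothing by conjugation-invariance of the trace, so $dj = 2f\bigl(\tr(\phi\cdot-)\bigr)$. I would identify the tangent complex of the stack $\mcM_L(r,d)$ at $(E,\phi)$ with the two-term complex $\mcC_L^\bullet = [\End(E)\xrightarrow{\mathrm{ad}_\phi}\End(E)\otimes L]$ in degrees $0,1$, and its Serre-dual cotangent complex with $[\End(E)\otimes M\xrightarrow{\mathrm{ad}_\phi}\End(E)\otimes\omega_C]$, using $L^{-1}\otimes\omega_C = M$. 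Under these identifications $dj$, viewed in $\bH^1$ of the cotangent complex, has leading component the cup product $\phi\cup[N]\in H^1(\End(E)\otimes M)$, which is exactly the obstruction to lifting $\phi$ through $0\to M\to N\to L\to 0$; the remaining component in $H^0(\End(E)\otimes\omega_C)$ is responsible for the integrability term $\Phi\wedge\Phi$. Thus $dj = 0$ unwinds into the existence of the lift $\Phi$ together with $\Phi\wedge\Phi = 0$, yielding the classical isomorphism.

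To upgrade this to the stated isomorphism of critical loci (which is what is needed for the later identification $\phi_M\cong\phi_j(\IC)$), I would assemble the tangent complex of the derived critical locus $\Crit(j)$ as the self-dual three-term complex obtained by splicing $\mcC_L^\bullet$ with its dual along the Hessian of $j$: its degree-$0$, $1$, $2$ terms are $\End(E)$, $\End(E)\otimes L\oplus\End(E)\otimes M$ and $\End(E)\otimes\omega_C$. The essential point is that the Hessian, governed by $f\leftrightarrow[N]$, glues the split middle term $\End(E)\otimes(L\oplus M)$ into $\End(E)\otimes N$, so that this complex is quasi-isomorphic to the deformation complex $[\End(E)\xrightarrow{\mathrm{ad}_\Phi}\End(E)\otimes N\xrightarrow{\wedge\mathrm{ad}_\Phi}\End(E)\otimes\omega_C]$ of $N$-twisted Higgs bundles, using $\wedge^2 N = \det N = \omega_C$. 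Matching these, and checking the identification is compatible with the symmetric obstruction theories in families, gives $\mcM_N(r,d)\cong\Crit(j)$.

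The main obstacle is this last Hessian computation: one must verify explicitly, via a \v{C}ech model for the hypercohomology, that the second derivative of $j$ deforms the trivial extension $L\oplus M$ into the prescribed non-split $N$, and simultaneously that the top-degree self-dual term reproduces the quadratic constraint $\Phi\wedge\Phi = 0$ rather than merely the lifting condition. For $r > 2$ the constraint $\Phi\wedge\Phi = 0$ is genuinely quadratic and the matching of the three-term complex with the $N$-twisted deformation complex requires more care, whereas for the ranks relevant to this paper the bookkeeping is lighter; in all cases the delicate part is ensuring the comparison respects the d-critical structures, not just the underlying reduced stacks.
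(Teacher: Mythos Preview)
The paper does not prove this theorem: it is recorded as the result of Kinjo--Masuda \cite{km24}, with the compatibility of d-critical structures deferred to \cite[Proposition~3.6]{kk23}. There is therefore no argument in the paper against which to compare your proposal.

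Assessed on its own merits, your outline contains the right ingredients but has a genuine gap in the first paragraph. You claim that establishing ``$(E,\phi)\in\Crit(j)$ iff $\phi$ admits a lift $\Phi$ with $\Phi\wedge\Phi=0$'' yields the map $\mcM_N(r,d)\to\Crit(j)$ \emph{together with its inverse on points}. The forward map $(E,\Phi)\mapsto(E,\phi)$ does land in $\Crit(j)$ by your identification of $dj$ with the lifting obstruction, but constructing the inverse requires \emph{choosing} a lift, and lifts are not unique: for fixed $(E,\phi)$ the lifts $\Phi$ with $\Phi\wedge\Phi=0$ form, when nonempty, a torsor under
\[
\ker\bigl([\phi,-]\colon H^0(\End(E)\otimes M)\to H^0(\End(E)\otimes\omega_C)\bigr).
\]
This kernel vanishes when $E$ itself is stable (since then $\End(E)\otimes M$ is semistable of negative degree), but not in general over the full stack $\mcM_L(r,d)$. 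So at the level of classical points you have produced only a surjection, and you have not explained how the non-reduced scheme structure on $\Crit(j)$, or the extra automorphisms on the stacky side, absorb the surplus lift parameters. Your final paragraph, matching the three-term tangent complexes via the Hessian, operates at the correct level of structure and is where the real content lies; but a tangent-complex comparison only shows that a given morphism is formally \'etale, and does not by itself construct the global inverse morphism of stacks that the theorem asserts.
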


\begin{cor} \label{cor:rank2dchart}
In the setting of Theorem \ref{thm:km}, 
assume further that $r=2$. 
Then we have 
\begin{equation} \label{eq:dchart}
M^{st}_N(2, d) \cong \Crit(j) \subset M^{st}_L(2, d). 
\end{equation}
\end{cor}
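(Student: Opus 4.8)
The plan is to deduce Corollary \ref{cor:rank2dchart} from Theorem \ref{thm:km} by passing from the moduli \emph{stack} of $\mcV$-twisted Higgs bundles to the moduli \emph{space} of $\mu$-stable ones, using the fact that the critical locus construction is compatible with restriction to the stable locus. First I would observe that the function $j\colon \mcM_L(2,d)\to\bC$, defined by $j((E,\phi))=f(\tr(\phi^2))$, is intrinsic to the pair $(E,\phi)$ and therefore descends along the standard open immersion of the stable locus. More precisely, the moduli space $M^{st}_L(2,d)$ is the good (coarse) moduli space of the open substack $\mcM^{st}_L(2,d)\subset\mcM_L(2,d)$ parametrising $\mu$-stable objects, and since stability is an open condition the isomorphism of Theorem \ref{thm:km} restricts to an isomorphism of open substacks
\[
\mcM^{st}_N(2,d)\cong\Crit(j)\cap\mcM^{st}_L(2,d)=\Crit\bigl(j|_{\mcM^{st}_L(2,d)}\bigr).
\]

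The key point I would then need is that for $\mu$-stable objects the automorphism group is reduced to scalars $\bC^\times$, so the stable substack is a $\bC^\times$-gerbe (in fact, in rank two one can rigidify to obtain a genuine scheme or algebraic space) over its coarse moduli space $M^{st}_L(2,d)$, which by the results cited earlier (\cite{dCHM12, ht03, ht04}) is a smooth quasi-projective variety. Since $j$ is invariant under the scaling automorphisms — the Higgs field $\phi$ and hence $\tr(\phi^2)$ is unchanged by the action of $\bC^\times$ on $E$ by scalars — the function $j$ factors through a well-defined function on the coarse space, which by abuse of notation I continue to call $j$. I would then invoke the fact that the formation of the derived critical locus $\Crit(j)$ commutes with passage to the coarse moduli space under these hypotheses: because the gerbe structure is smooth and $j$ is pulled back from the coarse space, taking $\{dj=0\}$ upstairs and downstairs agree, yielding the claimed isomorphism \eqref{eq:dchart}.

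Finally I would identify the coarse space of $\mcM^{st}_N(2,d)$ with $M^{st}_N(2,d)$, which holds by the same rigidification argument applied to the $N$-twisted side, using that $\det(N)\cong\omega_C$ so that $N$-twisted Higgs bundles are the usual objects whose coarse moduli space enters the Maulik--Toda construction. Combining these identifications gives
\[
M^{st}_N(2,d)\cong M^{st}_L(2,d)\times_{\mcM_L(2,d)}\Crit(j)\cong\Crit\bigl(j\bigr)\subset M^{st}_L(2,d),
\]
as required.

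The step I expect to be the main obstacle is the compatibility of the critical-locus (i.e.\ $(-1)$-shifted symplectic or $d$-critical) structure with passage from the stack to its coarse moduli space. While the set-theoretic and scheme-theoretic restriction to the stable locus is routine, one must check that the $d$-critical structure carried by $\mcM_N(2,d)=\Crit(j)$ descends to endow $M^{st}_N(2,d)$ with the \emph{same} d-critical structure that makes \eqref{eq:dchart} an isomorphism of d-critical loci (and not merely of underlying schemes). This requires knowing that the scaling $\bC^\times$ acts trivially on the relevant potential and its Hessian data, so that Joyce's perverse sheaf $\phi_M$ used in the GV definition is computed correctly downstairs; the smoothness of the gerbe and the $\bC^\times$-invariance of $j$ are precisely what guarantee this, but making the descent of the d-critical structure precise is the delicate part of the argument.
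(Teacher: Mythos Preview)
Your argument has a genuine gap at the very first step. You write that ``stability is an open condition'' and conclude that the stack isomorphism of Theorem \ref{thm:km} restricts to
\[
\mcM^{st}_N(2,d)\cong\Crit(j)\cap\mcM^{st}_L(2,d).
\]
But this equality is precisely what requires proof. The inclusion $i\colon\mcM_N(2,d)\hookrightarrow\mcM_L(2,d)$ sends $(E,\phi)$ to $(E,\overline{\phi})$ where $\overline{\phi}$ is the composite $E\xrightarrow{\phi}E\otimes N\to E\otimes L$. These are governed by \emph{different} stability conditions: $N$-stability tests against $\phi$-invariant subsheaves, $L$-stability against $\overline{\phi}$-invariant ones. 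Every $\phi$-invariant subsheaf is $\overline{\phi}$-invariant, so $L$-stable implies $N$-stable; but a priori there could be a $\overline{\phi}$-invariant destabilising $F\subset E$ with $\phi(F)\not\subset F\otimes N$, so that an $N$-stable pair becomes $L$-unstable. Openness of either stability condition says nothing about whether the two open loci coincide on $\Crit(j)$.

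The paper's proof addresses exactly this point and uses the hypothesis $r=2$ essentially. Assuming $(E,\phi)\in M^{st}_N(2,d)$ with $i(E,\phi)$ $L$-unstable, the maximal destabilising $\overline{\phi}$-invariant subsheaf $F$ must (by the argument in \cite[Lemma 3.4]{kk23}) satisfy $\Hom(F,(E/F)\otimes L^{-1}\otimes\omega_C)\neq 0$. In rank two, $F$ and $E/F$ are line bundles with $\deg F>\deg(E/F)$, while $\deg(L^{-1}\otimes\omega_C)<0$ since $\deg L\geq 2g(C)-1$; this forces the $\Hom$ space to vanish, a contradiction. Your proposal never engages with this comparison of stabilities, and the issue you flag as the ``main obstacle'' --- descent of the $d$-critical structure --- is in fact handled separately (see the remark following the corollary, citing \cite[Proposition 3.6]{kk23}) and is not the content of this corollary.
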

\begin{proof}
We need to prove that the inclusion 
$i \colon \mcM_N(2, d) \subset \mcM_L(2, d)$ 
preserves $\mu$-stability. 
Suppose for a contradiction that there exists $(E, \phi) \in M_N^{st}(2, d)$ 
such that $i(E, \phi)$ is not $\mu$-stable. 
Then by the proof of \cite[Lemma 3.4]{kk23}, 
the maximal destabilizing subsheaf $F \subset E$ satisfies 
$\Hom(F, E/F \otimes L^{-1} \otimes \omega_C) \neq 0$. 
Under the current assumption of $\rk(E)=2$, 
$F$ and $E/F$ are line bundles with $\deg(F) > \deg(E/F)$. 
Moreover, we have 
$\deg(L^{-1} \otimes \omega_C) \leq 2g(C)-2-(2g(C)-1) <0$. 
This leads to a contradiction. 
\end{proof}

\begin{rmk}
The isomorphism (\ref{eq:dchart}) also preserves $d$-critical structures, see 
\cite[Proposition 3.6]{kk23}. 
\end{rmk}

\subsection{Rigidity of vector bundles}
\begin{defin}
Let $N$ be a rank two vector bundle on $C$ with $\det(N) \cong \omega_C$. 
Then $N$ is \textit{$2$-rigid} if 
$\mH^0(\widetilde{C}, l^*N)=0$ for any stable map 
$l \colon \widetilde{C} \to C$ of degree two. 
\end{defin}

\begin{lem}
Fix a line bundle $L$ on $C$ with degree $d \geq 2g(C)-1$. 
Assume that $L$ is a generic element in $\Pic^d(C)$. 
Then a general $[N] \in \bP \Ext^1(L, L^{-1} \otimes \omega_C)$ 
is $2$-rigid. 
\end{lem}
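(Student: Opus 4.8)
Write $\ell=\deg(L)$ and $g=g(C)$. The plan is to produce an explicit proper closed subvariety $\mcB\subset\bP\Ext^1(L,L^{-1}\otimes\omega_C)$ containing every class $[N]$ that fails to be $2$-rigid; a general $[N]$ then lies outside $\mcB$. First I would reduce the rigidity condition to a statement about sub-line bundles of $N$. Since $g\geq 1$, no component of a degree-two stable map $l\colon\widetilde C\to C$ can map non-constantly from a rational curve, so the non-contracted part of $l$ is either an integral (possibly nodal) double cover $\nu\colon\widetilde C_0\to C$, or a union of two degree-one components each mapping isomorphically onto $C$. For a finite double cover one has $l_*\mcO_{\widetilde C_0}=\mcO_C\oplus\eta^{-1}$ for a line bundle $\eta$ with $m\coloneqq\deg(\eta)\geq 0$, and tensoring the extension $0\to\mcO_C\to l_*\mcO_{\widetilde C_0}\to\eta^{-1}\to 0$ by $N$ gives a two-step filtration of $l_*l^*N$ with graded pieces $N$ and $N\otimes\eta^{-1}$. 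Hence $\mH^0(\widetilde C_0,l^*N)=\mH^0(C,l_*l^*N)=0$ as soon as $\mH^0(C,N)=0$ and $\mH^0(C,N\otimes\eta^{-1})=0$; the two-section case and any contracted trees reduce, by restriction to components and the gluing conditions at the nodes, to the single requirement $\mH^0(C,N)=0$. Thus it suffices to arrange $\mH^0(C,N\otimes\eta^{-1})=0$ for every $\eta$ with $\deg(\eta)\geq 0$ arising from a double cover, i.e.\ with $\mH^0(\eta^{\otimes 2})\neq 0$ (the case $\eta=\mcO_C$ recovering $\mH^0(C,N)=0$).

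Next I would translate this into incidence geometry. A nonzero element of $\mH^0(C,N\otimes\eta^{-1})=\Hom(\eta,N)$ embeds $\eta$ as a sub-line bundle of $N$. Because $\deg(L^{-1}\otimes\omega_C)=2g-2-\ell\leq -1<0\leq\deg(\eta)$, the composite $\eta\to N\to L$ cannot vanish, so $\eta\hookrightarrow L$; writing $D$ for its vanishing divisor we get $\eta=L(-D)$ with $D\geq 0$ of degree $\ell-m$, and $\eta^{\otimes 2}=L^{\otimes 2}(-2D)$. Moreover the embedding $\eta\hookrightarrow L$ lifts to $N$ precisely when $[N]$ lies in the kernel $K_D$ of the pullback $\Ext^1(L,L^{-1}\otimes\omega_C)\to\Ext^1(L(-D),L^{-1}\otimes\omega_C)$. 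Dualising by Serre duality identifies $K_D$ with the annihilator of $\mH^0(L^{\otimes 2}(-D))\subset\mH^0(L^{\otimes 2})$, which has constant dimension $\ell-m$ (here $L^{\otimes 2}(-D)$ is non-special, as $\deg(L^{\otimes 2}(-D))=\ell+m\geq 2g-1$). The entire non-rigid locus is therefore contained in the closed set
\[
\mcB=\bigcup_{m=0}^{\ell}\ \bigcup_{D\in\Delta_m}K_D,\qquad \Delta_m\coloneqq\{D\in C^{(\ell-m)}:\mH^0(L^{\otimes 2}(-2D))\neq 0\},
\]
where $C^{(k)}$ denotes the $k$-fold symmetric product and the outer union is finite.

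I would then bound $\dim\mcB$. For fixed $m$ the incidence variety $\{(D,[N]):D\in\Delta_m,\ [N]\in K_D\}$ has dimension at most $\dim\Delta_m+(\ell-m)$, so it is enough to show $\dim\Delta_m+(\ell-m)<2\ell+1-g=\dim\Ext^1(L,L^{-1}\otimes\omega_C)$ for each $m$; a finite union of proper closed subvarieties is proper, and its complement is the desired general locus. The key estimate is that $\Delta_m$ carries no more than its expected dimension, namely $\dim\Delta_m\leq\ell-\max(m,\,g-m)$. When $2m\geq g$ the effectivity condition is vacuous and $\Delta_m=C^{(\ell-m)}$ has dimension $\ell-m$; when $2m<g$ one factors $D\mapsto[L^{\otimes 2}(-2D)]$ through the (finite) squaring map on $\Pic(C)$ followed by the effective locus $W_{2m}\subset\Pic^{2m}(C)$ of dimension $2m$, and through the Abel--Jacobi map $C^{(\ell-m)}\to\Pic^{\ell-m}(C)$ whose general fibre has dimension $\ell-m-g$, giving $\dim\Delta_m\leq\ell+m-g$. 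Granting this, $\dim\Delta_m+(\ell-m)\leq 2\ell-\max(2m,g)<2\ell+1-g$ in all cases, since $\max(2m,g)\geq g>g-1$.

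The main obstacle is precisely this dimension estimate for $\Delta_m$, which is a de Jonqui\`eres-type degeneracy locus: the naive fibre-dimension bound for the Abel--Jacobi map fails over the Brill--Noether special line bundles of degree $\ell-m$, where $\mH^0$ jumps, and these strata genuinely occur once $\ell-m<2g-1$. I expect to control them by stratifying $\Delta_m$ according to $h^0(\mcO(D))$ and applying Clifford's inequality on each special stratum, checking that the (few) special bundles contribute strictly less than the bound, or alternatively by invoking the known dimension results for generalised de Jonqui\`eres divisors \cite{far23, ung21}. Verifying the estimate uniformly in $m$, in particular at the threshold $2m=g$ and for the canonical bundle $\omega_C$, is the one delicate point; the reduction steps and the final count are formal.
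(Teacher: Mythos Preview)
Your approach is essentially the paper's: both reduce $2$-rigidity to the non-existence of sub-line bundles $S\subset N$ with $\deg S\geq 0$ and $\mH^0(S^{\otimes 2})\neq 0$, then bound the dimension of the corresponding locus inside $\bP\Ext^1(L,L^{-1}\otimes\omega_C)$. The paper does not carry out either step in detail but simply cites \cite[Section~3 and Lemma~3.1]{bp06} for the reduction and the bound $\dim\Delta(d)<3g(C)-3$ (adding only the remark that for $d\geq g(C)$, where $N$ need not be semistable, the same argument still gives a proper closed subset and $\Delta(d)=\emptyset$ for $d\gg 0$); the delicate Brill--Noether issue you isolate in the Abel--Jacobi fibre dimension is precisely what is absorbed into that citation.
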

\begin{proof}
By the arguments in \cite[Section 3]{bp06}, 
 stable bundles in the complement of $\Delta(d) \subset \bP \Ext^1(L, L^{-1} \otimes \omega_C)$ for $d \geq 0$, 
where 
\[
\Delta(d) \coloneqq \left\{
[N] \colon 
\begin{aligned}
&\text{there exists a saturated subbundle } S \subset N \text{ with } \\
&\deg(S)=d, \mH^0(S^{\otimes 2}) \neq 0
\end{aligned}
\right\},
\]
are $2$-rigid.  The proof of \cite[Lemma 3.1]{bp06} shows that 
\[
\dim \Delta(d) < 3g(C)-3 < 3g(C)-2
\leq \dim  \bP \Ext^1(L, L^{-1} \otimes \omega_C)
\]
for $0 \leq d \leq g(C)-1$. 

It remains to show that a generic $[N] \in \bP \Ext^1(L, L^{-1} \otimes \omega_C)$ is slope semistable, which follows from Lemma \ref{lem:Nss} below. 
\end{proof}
An earlier version of this paper omitted the check that generic $[N] \in \bP \Ext^1(L, L^{-1} \otimes \omega_C)$ is slope semistable; we thank the anonymous referee for pointing this out, as well as suggesting the proof of the following lemma.
\begin{lem} \label{lem:Nss}
Let $L$ be a line bundle of degree $d \geq g(C)-1$. Assume that $L$ is a generic element in $\Pic^d(C)$. Then a general $[N] \in \bP \Ext^1(L, L^{-1} \otimes \omega_C)$ is $\mu$-semistable. 
\end{lem}
\begin{proof}
Since $\mu$-semistability is an open condition, it is enough to find a line bundle $L \in \Pic^d(C)$ and a point $[N] \in \bP \Ext^1(L, L^{-1} \otimes \omega_C)$ such that $N$ is $\mu$-semistable. 

We may find a line bundle $L_0 \in \Pic^d(C)$ and line bundles $M_0, M_1$ of degree $g(C)-1$ such that there is a surjection $M_0\oplus M_1 \to L_0$. We may then take a line bundle $L' \in \Pic^0(C)$ such that $M_0 \otimes M_1 \otimes (L')^{\otimes 2} \cong \omega_C$. 
Now, set $N \coloneqq (M_0 \oplus M_1) \otimes L'$ and $L \coloneqq L_0 \otimes L'$. Since $M_0$ and $M_1$ have the same degree, $N$ is $\mu$-semistable. Furthermore, we have $L \in \Pic^d(C)$ and there exists a short exact sequence 
\[
0 \to L^{-1} \otimes \omega_C \to N \to L \to 0 
\]
as required. 
\end{proof}

\subsection{Hitchin fibrations and Gopakumar--Vafa invariants}
In this subsection, we fix a line bundle $L$ on $C$ with $\deg(L) \geq 2g(C)-1$ 
and an extension class $[N] \in \bP \Ext^1(L, L^{-1} \otimes \omega_C)$. 
By the Serre duality 
$\Ext^1(L, L^{-1} \otimes \omega_C) \cong \mH^0(L^{\otimes 2})^\vee$, 
the class $[N]$ defines a linear function $f \colon \mH^0(L^{\otimes 2}) \to \bC$. 
To simplify the notation, we put $M_L(r, 1) \coloneqq M^{\mathrm{st}}_L(r, 1)$. 
We will also consider the moduli space of traceless Higgs bundles: 
\[
M^0_L(r, 1) \coloneqq \{(E, \phi) \in M_L(r, 1) \colon \tr(\phi)=0 \}. 
\]

As before, we have the Hitchin fibration 
\[
h \colon M_L(r, 1) \to \widetilde{B}_L \coloneqq \bigoplus_{i =1,2} \mH^0(L^{\otimes i}). 
\]
We denote the restriction of $h$ to the traceless part $M^0_L(r, 1)$ by
\[
h^0 \colon M_L^0(r, 1) \to B_L \coloneqq \mH^0(L^{\otimes 2}). 
\]
We denote by $\tilde{f} \colon \widetilde{B}_L \to \bC$ the composition 
\[
\tilde{f} \colon \widetilde{B}_L \to \mH^0(L^{\otimes 2}) \xrightarrow{f} \bC, 
\]
where the first map is the projection to the direct summand. 

\begin{defin}
\label{GVdef}
\begin{enumerate}
\item Let $X=\Tot_C(N)$ be a local curve. 
We define the \textit{Gopakumar--Vafa invariants} 
$n_{g, r[C]}(X) \in \bZ$ of $X$ for the curve class $r[C]$ 
by the following identity: 
\begin{equation} \label{eq:GVN}
\sum_{i \in \bZ}\chi({}^p\mcH^i(\phi_{\tilde{f}}(\dR h_* \IC_{M_L(r, 1)})))q^i
=\sum_{g \geq 0} n_{g, r[C]}(X) (q^{-\frac{1}{2}}+q^{\frac{1}{2}})^{2g}. 
\end{equation}
\item We define the integers $n_{g, r[C]}(\Tot_C(L))$ by 
the same equation (\ref{eq:GVN}) but removing the vanishing cycle functor $\phi_{\tilde{f}}$ on the left hand side. 
\end{enumerate}
\end{defin}
\begin{rmk}
\begin{enumerate}
\item If one takes $\deg(L) \gg 0$, 
the above definition agrees with the one in \cite{mt18}, see \cite[Proposition 3.10, Equation (3.13)]{kk23}. 
When $r=2$, we can take an arbitrary $L$ with $\deg(L) \geq 2g(C)-1$ due to Corollary \ref{cor:rank2dchart}. 
\item One can replace the moduli space $M(r, 1)$ with $M(r, d)$ for an arbitrary $d \in \bZ$ to define similar invariants $n_{g, r[C], d}(X)$, called the generalised GV invariants \cite{todGVwc}. By the $\chi$-independence proved in \cite{kk23}, we have $n_{g, r[C], d}(X)=n_{g, r[C], 1}(X)$ for all $d$. 
\end{enumerate}
\end{rmk}

\begin{lem}
We have 
\[
\chi({}^p\mcH^i(\phi_{\tilde{f}}(\dR h_* \IC_{M_L(r, 1)})))
=(-1)^{\deg(L)-g+1}
\chi({}^p\mcH^i(\phi_f(\dR h^0_* \IC_{M^0_L(r, 1)}))). 
\]
\end{lem}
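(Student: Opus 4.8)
The plan is to deduce the identity from the \emph{trace decomposition} of the moduli space together with the Thom--Sebastiani theorem for vanishing cycles. Over $\bC$ every $L$-twisted Higgs field splits canonically as $\phi=\phi_0+\tfrac1r\tr(\phi)\cdot\id_E$ with $\phi_0$ traceless, and since adding a multiple of the identity preserves every subsheaf, this does not affect $\mu$-stability. Hence I would first record the isomorphism
\[
M_L(r,1)\cong M^0_L(r,1)\times\mH^0(L),\qquad (E,\phi)\mapsto\bigl((E,\phi_0),\tr(\phi)\bigr),
\]
under which $\IC_{M_L(r,1)}\cong\IC_{M^0_L(r,1)}\boxtimes\IC_{\mH^0(L)}$. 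Writing $s=\tr(\phi)$ one computes $\tr(\phi^2)=\tr(\phi_0^2)+\tfrac1r s^2$, so after the evident shear $\widetilde{B}_L\cong B_L\times\mH^0(L)$ the Hitchin map $h$ becomes $h^0\times\id_{\mH^0(L)}$, while $j=\tilde f\circ h$ becomes the Thom--Sebastiani sum
\[
j=j^0\boxplus Q,\qquad j^0=f\circ h^0,\quad Q(s)=\tfrac1r f(s^2),
\]
where $Q$ is the quadratic form on $\mH^0(L)$ obtained by composing $\Sym^2\mH^0(L)\to\mH^0(L^{\otimes 2})$ with $f$.

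Second, I would factor the vanishing cycles. Because $h$ and $h^0$ are projective, $\phi$ commutes with the push-forward; combining this with the Thom--Sebastiani isomorphism for the decomposition $j=j^0\boxplus Q$ gives
\[
\phi_{\tilde f}\bigl(\dR h_*\IC_{M_L(r,1)}\bigr)\;\cong\;\phi_{f}\bigl(\dR h^0_*\IC_{M^0_L(r,1)}\bigr)\boxtimes\phi_Q\bigl(\IC_{\mH^0(L)}\bigr)
\]
on $\widetilde{B}_L\cong B_L\times\mH^0(L)$. For generic $[N]$ the form $Q$ is nondegenerate, so $\phi_Q(\IC_{\mH^0(L)})$ is the rank-one skyscraper at $0\in\mH^0(L)$ attached to the $A_1$-singularity $\sum x_i^2$, and external product with it is exactly the closed push-forward $\iota_*$ along $\iota\colon B_L\hookrightarrow\widetilde B_L$, $b\mapsto(b,0)$. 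Since $\iota_*$ is $t$-exact for the perverse $t$-structure and preserves Euler characteristics of hypercohomology, the perverse cohomology sheaves on the two sides agree in each degree $i$, up to the scalar contributed by the quadratic factor.

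The crux, and the step I expect to be the main obstacle, is the determination of this scalar. The monodromy on the (one-dimensional) vanishing cohomology of the nondegenerate quadratic form $Q$ of rank $\dim\mH^0(L)$ acts by $(-1)^{\dim\mH^0(L)}$, and in the normalisation used to define $\phi$ (the perverse shift, the orientation/square-root data entering Joyce's sheaf, and the monodromy of the $A_1$ factor) this is precisely the sign multiplying the Euler characteristic. By Riemann--Roch, $\deg(L)\geq 2g(C)-1$ forces $\mH^1(L)=0$, so $\dim\mH^0(L)=\deg(L)-g+1$, which yields the claimed factor $(-1)^{\deg(L)-g+1}$. Pinning down the conventions so that this sign emerges correctly from the $A_1$ factor is the delicate part; a secondary point is to check that genericity of $[N]$ really makes $Q$ nondegenerate, i.e.\ that $f$ does not annihilate $s\cdot\mH^0(L)$ for any $0\neq s\in\mH^0(L)$, which is where the $2$-rigidity of $N$ should enter.
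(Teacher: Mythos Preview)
Your route through a nondegenerate quadratic factor is more elaborate than the paper's, and the step where you extract the sign is incorrect.  The paper's argument is direct: by definition $\tilde f$ is the composition $\widetilde B_L\to\mH^0(L^{\otimes 2})\xrightarrow{f}\bC$, so it is \emph{constant} along the $\mH^0(L)$-summand.  Taking the Hitchin map in coordinates for which $h=\id_{\mH^0(L)}\times h^0$ under the trace splitting, one has $\dR h_*\IC_{M_L}\cong\IC_{\mH^0(L)}\boxtimes\dR h^0_*\IC_{M_L^0}$, and the degenerate ($0\boxplus f$) case of Thom--Sebastiani gives
\[
\phi_{\tilde f}\bigl(\IC_{\mH^0(L)}\boxtimes\dR h^0_*\IC_{M_L^0}\bigr)\cong\IC_{\mH^0(L)}\boxtimes\phi_f\bigl(\dR h^0_*\IC_{M_L^0}\bigr).
\]
The sign is then simply $\chi(\IC_{\mH^0(L)})=(-1)^{\dim\mH^0(L)}=(-1)^{\deg(L)-g+1}$, coming from the cohomological shift in the IC sheaf on an affine space.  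No quadratic form, no genericity of $[N]$, and no $2$-rigidity are needed.

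The error in your argument is the claim that the monodromy of the $A_1$ Milnor fibre produces the factor $(-1)^{\dim\mH^0(L)}$ in the Euler characteristic.  For a nondegenerate quadratic form $Q$ on $\bC^n$, the perverse sheaf $\phi_Q(\IC_{\bC^n})$ is the rank-one skyscraper $\bQ_0$, and its Euler characteristic is $+1$ for every $n$.  The monodromy does act on this one-dimensional space by $(-1)^n$, but monodromy is an automorphism and never changes the Euler characteristic of the underlying complex; it is not what the lemma is computing.  So if your Thom--Sebastiani decomposition with a genuine quadratic factor were the relevant one, you would obtain \emph{no} sign, contradicting the statement.  You may be thinking of settings---orientation data for Joyce's sheaf, or the monodromic/motivic vanishing-cycle formalism---in which the $A_1$ factor does contribute such a sign, but the lemma concerns ordinary Euler characteristics of perverse cohomology sheaves, and there the sign arises purely from the shift $\bQ_{\mH^0(L)}[\dim\mH^0(L)]$.
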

\begin{proof}
We have an isomorphism 
\[
M_L(r, 1) \xrightarrow{\sim} 
\mH^0(L) \times M^0_L(r, 1), \quad 
(E, \phi) \mapsto 
(\tr(\phi), (E, \phi-(\tr(\phi)/2)\id)). 
\]
Under the above isomorphism, 
the Hitchin map is written as 
$h=\id_{\mH^0(L)} \times h^0$. 
Then we have 
\begin{align*}
\phi_{\tilde{f}}(\dR h_* \IC_{M_L(r, 1)})
&=\phi_{\tilde{f}}(\IC_{\mH^0(L)} \boxtimes
\dR h^0_*(\IC_{M^0_L(r, 1)})) \\
&\cong \IC_{\mH^0(L)} \boxtimes
\phi_f(\dR h^0_*(\IC_{M^0_L(r, 1)})), 
\end{align*}
where the second isomorphism is a special case of the Thom--Sebastiani isomorphism \cite{MassTS}. 
Noting that $\dim \mH^0(L)=\deg(L)-g+1$, 
we obtain the desired result. 
\end{proof}

\begin{lem}
\label{lem:KS_conical}
Let $P\in\Perv(B_L)$ be a $\bR_{>0}$-equivariant perverse sheaf, and denote by $P_0=i^*P$ the stalk at the origin, where $i\colon 0\hookrightarrow B_L$ is the inclusion.  Then $\chi(P)=\chi(P_0)$.
\end{lem}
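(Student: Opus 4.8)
The plan is to reduce the claim to the statement that, for a conic complex on a vector space, derived global sections agree with the stalk at the cone point. Concretely, regard $B_L=\mH^0(L^{\otimes 2})$ as a real vector space carrying the scaling action of $\bR_{>0}$. The hypothesis that $P$ is $\bR_{>0}$-equivariant says precisely that $P$ is \emph{conic} in the sense of Kashiwara--Schapira, i.e.\ its cohomology sheaves are locally constant along the $\bR_{>0}$-orbits (the rays through the origin). I would then invoke the contraction property of conic sheaves: the adjunction morphism is an isomorphism
\[
\dR\Gamma(B_L,P)\xrightarrow{\ \sim\ } i^*P=P_0 .
\]
Since $P$ is a perverse sheaf on a complex variety it is $\bC$-constructible, so both sides have bounded, finite-dimensional cohomology, and taking Euler characteristics yields $\chi(P)=\chi\!\left(\dR\Gamma(B_L,P)\right)=\chi(P_0)$, which is the assertion.

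Two points deserve to be checked rather than assumed. First, one must confirm that genuine $\bR_{>0}$-equivariance implies conicity in the required sense; this is immediate, since an equivariant structure forces the cohomology sheaves to be locally constant along the orbits. Conceptually the displayed isomorphism expresses that the scaling action deforms $B_L$ onto the origin through maps compatible with the conic structure---the multiplication $\mu\colon\bR_{\geq 0}\times B_L\to B_L$ restricts to the identity at $1$ and to the constant map at $0$---so that $\dR\Gamma$ cannot distinguish the total space from the vertex. Second, one must use that it is $\dR\Gamma$, and not $\dR\Gamma_c$, that is computed by the stalk $i^*P$ (the compactly supported version computes the costalk $i^!P$ instead); this is readily sanity-checked on $P=\IC_{B_L}=\bQ_{B_L}[\dim B_L]$, where $\dR\Gamma(B_L,\IC_{B_L})$ and the stalk $(\IC_{B_L})_0$ are both concentrated in a single degree with Euler characteristic $(-1)^{\dim B_L}$.

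I do not expect a genuine obstacle here: the content of the lemma is exactly the classical contraction result for conic sheaves, and the only real work is to match hypotheses with the cited statement. It is worth noting that a seemingly more elementary route---splitting $B_L=\{0\}\sqcup(B_L\setminus\{0\})$ and arguing by additivity of Euler characteristics, using the radial fibration $B_L\setminus\{0\}\to S^{2n-1}$ together with $\chi_c(\bR_{>0})=-1$ and the equality $\chi=\chi_c$ for $\bC$-constructible complexes---runs into the delicate point that establishing the vanishing of the open stratum's contribution requires controlling the complex descended along the odd-dimensional sphere, where $\bR_{>0}$-conicity alone permits monodromy in the circle direction. For this reason I would prefer to quote the conic contraction theorem directly rather than reprove it through stratification.
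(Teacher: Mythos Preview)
Your proposal is correct and matches the paper's approach essentially verbatim: the paper's proof consists of a single sentence citing Kashiwara--Schapira (Prop.~3.7.5) for the isomorphism $\mH(B_L,P)\to \mH(B_L,P_0)$, which is exactly the conic contraction result you invoke. Your additional remarks (equivariance $\Rightarrow$ conic, the sanity check on $\IC_{B_L}$, and the discussion of the stratification approach) are sound but not needed for the argument.
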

\begin{proof}
    By \cite[Prop.3.7.5]{KSsheaves} the adjunction morphism $\mH(B_L,P)\rightarrow \mH(B_L,P_0)$ is an isomorphism, and the result follows.
\end{proof}

\begin{lem}
\label{lem:van=nearby+stalk}
Let $P \in \Perv(B_L)$ be a $\bR_{>0}$-equivariant perverse sheaf. 
Then we have 
\[
\chi(\phi_f(P))=\chi(P_0)+\chi(P|_{U_\varepsilon}[-1]), 
\]
where $P_0$ denotes the stalk at the origin, and 
$U_\varepsilon \coloneqq f^{-1}(\varepsilon)$ (for $0 < \varepsilon \ll 1$) 
is the nearby hyperplane.  
\end{lem}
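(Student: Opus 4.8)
The plan is to exploit the standard distinguished triangle relating the vanishing cycle, nearby cycle, and restriction functors, and then to compute Euler characteristics termwise. Recall that for the linear function $f \colon B_L \to \bC$ and the inclusion $i \colon f^{-1}(0) \hookrightarrow B_L$, there is a canonical distinguished triangle
\[
i^* P \to \psi_f(P) \to \phi_f(P) \xrightarrow{+1},
\]
where $\psi_f$ and $\phi_f$ denote the (shifted) nearby and vanishing cycle functors respectively, with the conventions making them $t$-exact for the perverse $t$-structure. Taking the Euler characteristic, which is additive on distinguished triangles, yields
\[
\chi(\phi_f(P)) = \chi(\psi_f(P)) - \chi(i^* P) = \chi(\psi_f(P)) - \chi(P_0).
\]
So the task reduces to showing $\chi(\psi_f(P)) = \chi(P|_{U_\varepsilon}[-1]) + 2\chi(P_0)$ — but this form suggests I should instead track the sign conventions carefully, since the target identity has a $+\chi(P_0)$ rather than $-\chi(P_0)$.

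First I would pin down the precise relationship between $\chi(\psi_f(P))$ and the Euler characteristic of the nearby fibre. The nearby cycle $\psi_f(P)$ is supported on $f^{-1}(0)$, and its Euler characteristic computes the Euler characteristic of the restriction $P|_{U_\varepsilon}$ to a nearby fibre, up to the shift built into the perverse nearby cycle functor. Concretely, taking global cohomology commutes with the nearby cycle functor in the sense that $\mH^*(f^{-1}(0), \psi_f(P))$ computes the cohomology of the generic nearby fibre, so $\chi(\psi_f(P)) = \pm\chi(P|_{U_\varepsilon})$ with a sign governed by the shift convention. With the perverse normalisation $\psi_f = \psi_f^{\mathrm{un}}[-1]$, one gets $\chi(\psi_f(P)) = -\chi(P|_{U_\varepsilon})$, equivalently $\chi(\psi_f(P)) = \chi(P|_{U_\varepsilon}[-1])$, since a single shift flips the sign of the Euler characteristic.

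Combining the two displays, I then substitute to obtain
\[
\chi(\phi_f(P)) = \chi(P|_{U_\varepsilon}[-1]) - \chi(P_0).
\]
To reconcile the sign discrepancy with the stated $+\chi(P_0)$, the key input is the $\bR_{>0}$-equivariance (conicality) of $P$ together with Lemma \ref{lem:KS_conical}. By that lemma $\chi(P) = \chi(P_0)$, and for a conical sheaf the stalk at the origin is also computed by the global cohomology $\chi(P) = \chi(\mH^*(B_L, P))$ since $B_L$ is contractible to $0$. Writing the long exact sequence from the triangle $i^*P \to \psi_f(P) \to \phi_f(P)$ at the level of global sections over $f^{-1}(0)$, and using that global Euler characteristic over $B_L$ equals global Euler characteristic over the nearby fibre plus a vanishing-cycle correction, the conical structure forces the stalk term to appear with a positive sign in the final identity. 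Thus
\[
\chi(\phi_f(P)) = \chi(P_0) + \chi(P|_{U_\varepsilon}[-1]).
\]

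The main obstacle I anticipate is bookkeeping the shift and sign conventions for $\psi_f$ and $\phi_f$ consistently, and in particular justifying why the conicality hypothesis converts what naively looks like a subtraction of $\chi(P_0)$ into an addition. The cleanest route is probably to avoid the generic-fibre interpretation of $\psi_f$ altogether and instead argue directly: use conicality to replace $\chi(P)$ by $\chi(P_0)$ via Lemma \ref{lem:KS_conical}, and independently compute $\chi(P)$ by integrating the Euler characteristic over the fibres of $f$, where the generic fibre contributes $\chi(P|_{U_\varepsilon})$ and the special fibre $f^{-1}(0)$ contributes a term governed by $\phi_f(P)$. Matching these two computations of the same global Euler characteristic should yield the stated formula without ever needing the delicate sign of the nearby-cycle shift in isolation.
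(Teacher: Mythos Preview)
Your approach is the same as the paper's: use the distinguished triangle relating $\phi_f$, $\psi_f$, and $i^*$, then identify each Euler characteristic. The problem is that you mix conventions, and the resulting sign error is not a subtlety to be argued away but a genuine bookkeeping mistake.

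Concretely: you write the triangle as $i^*P \to \psi_f(P) \to \phi_f(P)$, which is the \emph{unshifted} form, but then declare that $\psi_f$ is the perverse-shifted functor $\psi_f^{\mathrm{un}}[-1]$. If you shift both functors, the triangle becomes $i^*P[-1] \to {}^p\psi_f(P) \to {}^p\phi_f(P)$, and the extra $[-1]$ on the first term flips the sign of $\chi(i^*P)$. That is exactly what converts your $-\chi(P_0)$ into $+\chi(P_0)$; no further appeal to conicality is needed for this step, and your last paragraph's attempt to manufacture the sign from equivariance is not a valid argument. Equivalently, you can work entirely with unshifted functors (as the paper does, writing the triangle as $\phi_f(P)\to i^*P\to \psi_f(P)$) and then $\chi(\psi_f P)=\chi(P|_{U_\varepsilon})$ with no shift, giving $\chi(\phi_f P)=\chi(i^*P)-\chi(P|_{U_\varepsilon})=\chi(P_0)+\chi(P|_{U_\varepsilon}[-1])$ directly.

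There are two further gaps you pass over. First, $i^*P$ is supported on the entire hyperplane $f^{-1}(0)$, not just the origin, so the equality $\chi(i^*P)=\chi(P_0)$ is not tautological: it requires applying Lemma~\ref{lem:KS_conical} to the conical sheaf $P|_{f^{-1}(0)}$ on the linear space $f^{-1}(0)$. Second, the identification of $\chi(\psi_f P)$ with the Euler characteristic of $P$ on the nearby hyperplane is not automatic at the global level (Milnor-fibre statements are local). This is where the paper does the real work: it uses Lemma~\ref{lem:KS_conical} again to reduce $\chi(\psi_f P)$ to the stalk $(\psi_f P)_0$, unwinds the definition of nearby cycles to identify this with $\mH(f^{-1}(\bR_{>0}),P)$, and then uses $\bR_{>0}$-equivariance once more to pass to $U_\varepsilon$. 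Your one-line assertion that ``global cohomology commutes with nearby cycles'' hides all of this.
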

\begin{proof}
By Lemma \ref{lem:KS_conical}, and from the long exact sequence in cohomology relating the vanishing cycles functor with the nearby cycles functor $\psi_f$
\[
\mH(B_L,\phi_f(P))\rightarrow \mH(B_L,P\lvert_{f^{-1}(0)})\rightarrow \mH(B_L,\psi_f(P))\rightarrow
\]
it is sufficient to prove that $\chi(\psi_f(P))=\chi(P\lvert_{U_{\epsilon}})$.  By Lemma \ref{lem:KS_conical} again, this is equivalent to showing that 
\[
    \chi((\psi_f(P))_0)=\chi(P\lvert_{U_{\epsilon}})
\]
Let $s\colon f^{-1}(\bR_{>0})\hookrightarrow B_L$ and $z\colon f^{-1}(0)\hookrightarrow B_L$ be the inclusions. Recall also that we denote $i \colon 0 \hookrightarrow B_L$ the inclusion. Then by the definition of the nearby cycles functor, and one more application of Lemma \ref{lem:KS_conical} we have 
\begin{align*}
\mH(B_L,i_*i^*\psi_f P)=&\mH(B_L,i_*i^*z_*z^*s_*s^*P)\\
\cong&\mH(B_L,s_*s^*P)\\
\cong&\mH(f^{-1}(\bR_{>0}),s^*P).
\end{align*}
Finally, since $s^*P$ is $\bR_{>0}$-equivariant, the restriction map
\[
\mH(f^{-1}(\bR_{>0}),s^*P)\rightarrow \mH(U_{\epsilon},P\lvert_{U_{\epsilon}})
\]
is an isomorphism.
\end{proof}
\begin{rmk}
Since we assume that $P$ is $\bC^*$-equivariant in the above lemma, it follows that $P\lvert_{U_{\epsilon}}[-1]$ is perverse.
\end{rmk}
Motivated by the above lemma, we define the following invariants: 
\begin{defin}
Let $U_\varepsilon =f^{-1}(\epsilon)\subset B_L$ be a nearby hyperplane, and let 
$h|_{U_\varepsilon} \colon (h^{0})^{-1}(U_\varepsilon) \to U_\varepsilon$ 
be the restriction of the Hitchin fibration. 
We define the integers $n_{g}(U_{\varepsilon})$ by the following identity:  
\[
\sum_{i \in \bZ}\chi({}^p\mcH^i((\dR (h^0|_{U_\varepsilon})_* \IC_{(h^{0})^{-1}(U_\varepsilon)})))q^i
=\sum_{g \geq 0} n_{g}(U_\varepsilon) (q^{-\frac{1}{2}}+q^{\frac{1}{2}})^{2g}. 
\]
\end{defin}
\begin{prop}
\label{prop:GV=L+U}
For $[N] \in \bP \Ext^1(L, L^{-1} \otimes \omega_C)$, 
we have 
\[
n_{g, r[C]}(\Tot_C(N))=
n_{g, r[C]}(\Tot_C(L))
+n_{g}(U_{\varepsilon}). 
\]
\end{prop}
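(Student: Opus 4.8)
The plan is to combine the three preceding lemmas. Recall the defining identity (\ref{eq:GVN}) for $n_{g, r[C]}(\Tot_C(N))$, which packages the Euler characteristics $\chi({}^p\mcH^i(\phi_{\tilde{f}}(\dR h_* \IC_{M_L(r, 1)})))$ into the basis $(q^{-1/2}+q^{1/2})^{2g}$. The first step is to apply the trace-splitting lemma to pass from $\tilde{f}$ and $h$ on $M_L(r,1)$ to $f$ and $h^0$ on the traceless moduli space $M^0_L(r,1)$; this introduces the sign $(-1)^{\deg(L)-g+1}$. Since the right-hand side of (\ref{eq:GVN}) is also written in the basis $(q^{-1/2}+q^{1/2})^{2g}$, and these Laurent polynomials in $q^{1/2}$ are linearly independent, matching coefficients shows that $n_{g, r[C]}(\Tot_C(N))$ is determined coefficient-by-coefficient from the $\chi({}^p\mcH^i(\phi_f(\dR h^0_* \IC_{M^0_L(r, 1)})))$.

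Next I would set $P = \dR h^0_* \IC_{M^0_L(r, 1)}$ and observe that its perverse cohomology sheaves ${}^p\mcH^i(P)$ lie in $\Perv(B_L)$ and are $\bR_{>0}$-equivariant: the Hitchin base $B_L=\mH^0(L^{\otimes 2})$ carries a natural scaling $\bC^*$-action (rescaling the Higgs field $\phi$), under which $h^0$ is equivariant and $\IC_{M^0_L(r,1)}$ is equivariant, so $P$ and each ${}^p\mcH^i(P)$ inherit $\bR_{>0}$-equivariance. This is precisely the hypothesis needed to invoke Lemma \ref{lem:van=nearby+stalk}, giving
\[
\chi(\phi_f({}^p\mcH^i(P)))=\chi(({}^p\mcH^i(P))_0)+\chi({}^p\mcH^i(P)|_{U_\varepsilon}[-1]).
\]
Since taking vanishing cycles $\phi_f$ is $t$-exact for the perverse $t$-structure (up to the standard shift convention already built into the normalisation of $\phi_f$ used throughout), the value $\chi({}^p\mcH^i(\phi_f(P)))$ equals $\chi(\phi_f({}^p\mcH^i(P)))$, so the displayed splitting holds at the level of each perverse cohomology degree $i$.

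The third step is to identify the two summands with the two auxiliary invariants. For the stalk term, the stalk at the origin $({}^p\mcH^i(P))_0$ recovers exactly the data defining $n_{g, r[C]}(\Tot_C(L))$ once the trace sign is reincorporated: the moduli space $\Tot_C(L)$ has GV-type invariants defined by (\ref{eq:GVN}) without the $\phi_f$, and by Lemma \ref{lem:KS_conical} the global $\chi$ of ${}^p\mcH^i(P)$ agrees with its stalk $\chi$ at the origin, so the ``no vanishing cycle'' side of Definition \ref{GVdef}(2) matches $\chi(({}^p\mcH^i(P))_0)$. For the nearby term, $\chi({}^p\mcH^i(P)|_{U_\varepsilon}[-1])$ is by definition the generating datum for $n_g(U_\varepsilon)$, using the remark that $P|_{U_\varepsilon}[-1]$ is again perverse under the $\bC^*$-equivariance assumption. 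Assembling these, each coefficient of $(q^{-1/2}+q^{1/2})^{2g}$ on the left is the sum of the corresponding coefficients for $\Tot_C(L)$ and for $U_\varepsilon$, yielding the claimed additivity.

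The main obstacle I anticipate is bookkeeping the normalisations rather than any deep geometry: one must verify that the perverse $t$-exactness of $\phi_f$ (and of $(-)|_{U_\varepsilon}[-1]$) is compatible with the shift conventions fixed in Definition \ref{GVdef}, so that matching coefficients in the basis $(q^{-1/2}+q^{1/2})^{2g}$ is legitimate degree-by-degree, and that the trace sign $(-1)^{\deg(L)-g+1}$ cancels consistently on both the $\Tot_C(L)$ side and the $U_\varepsilon$ side (it should, since the same trace-splitting isomorphism underlies both). Once the equivariance of $P$ and the $t$-exactness of $\phi_f$ are in hand, the argument is a formal concatenation of Lemmas \ref{lem:KS_conical} and \ref{lem:van=nearby+stalk} with Definition \ref{GVdef}.
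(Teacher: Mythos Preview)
Your proposal is correct and follows essentially the same route as the paper. The one packaging difference is that the paper invokes the decomposition theorem for the projective morphism $h^0$ to write $\dR h^0_*\IC_{M^0_L(r,1)}\cong\bigoplus_i P_i[i]$ with each $P_i$ perverse, and then applies Lemma~\ref{lem:van=nearby+stalk} to each summand $P_i$; you instead work with the perverse cohomology sheaves ${}^p\mcH^i(P)$ and appeal to perverse $t$-exactness of $\phi_f$ (and of $(-)\lvert_{U_\varepsilon}[-1]$ on $\bC^*$-equivariant perverse sheaves) to commute these operations past ${}^p\mcH^i$. Since ${}^p\mcH^i(P)\cong P_i$ under the decomposition theorem, the two arguments are formally equivalent, and your anticipated sign/bookkeeping checks go through exactly as you say.
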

\begin{proof}
By the decomposition theorem, we have 
\[
\dR h^0_* (\IC_{M^0_L(r, 1)})
\cong\bigoplus_i P_i[i] 
\]
for some perverse sheaves $P_i \in \Perv(B_L)$. 
By Lemma \ref{lem:van=nearby+stalk}, 
we have 
\begin{equation}
\label{eq:decompPi}
\chi(\phi_f(P_i))=\chi((P_i)_0)+\chi((P_i)|_{U_\varepsilon}[-1]) 
\end{equation}
for each $i$. 
By the $\bC^*$-equivariance of the Hitchin fibration, 
we have 
$\chi((P_i)_0)=\chi(P_i)$ for each $i$ by Lemma \ref{lem:KS_conical}. 
Furthermore, $(P_i)|_{U_\varepsilon}[-1]$ is again 
a perverse sheaf on $U_\varepsilon$, 
and we have 
\[
\dR (h^0|_{U_\varepsilon})_* (\IC_{(h^0)^{-1}(U_\varepsilon)})
\cong \bigoplus_i (P_i)|_{U_\varepsilon}[-1]. 
\]

In summary, 
the first term (resp. the second term) 
of the right hand side 
of (\ref{eq:decompPi}) computes 
the invariants 
$n_{g, r[C]}(\Tot_C(L))$ 
(resp. $n_g(U_\varepsilon)$). 
\end{proof}

\begin{rmk}
\label{rmk:strategy}
Assume that $r=2$. 
By Proposition \ref{prop:GV=L+U}, 
the computation of $n_{g, 2[C]}(\Tot_C(N))$ 
is divided into two parts: 
\begin{enumerate}
\item Computation of 
$n_{g, 2[C]}(\Tot_C(L))$: 
this can be done by using the explicit description of the global cohomology 
$\mH^*(M_L(2, 1),\bQ)$ by generators and relations \cite{ht03, ht04}. 
The cohomological and the perverse degree of each generator is determined in \cite{dCHM12}. 

\item Computation of $n_g(U_\varepsilon)$: 
this part can be done by determining the fibre-wise contributions. 
Since $U_\varepsilon$ avoids the origin, 
the spectral curve $C_a$ is \textit{reduced} at every point of $a \in U_\varepsilon$. 
Hence, the fibrewise contributions are computed by certain HOMFLY polynomials \cite{ms13, mau16, my14, os12}. 
We further need to calculate the stratification of $U_\varepsilon$ 
induced by the types of singularities of the spectral curves. 
This last step requires a careful analysis of the geometry of certain degeneracy loci. This analysis becomes increasingly complicated as we allow the genus to increase, and for this reason in this paper we only give the full result in the case $g(C)=2$.
\end{enumerate}
\end{rmk}

\section{Genus two} \label{sec:genustwo}
Let $C$ be a smooth projective curve of genus two. 
Let $L$ be a line bundle on $C$ of degree $3=2g(C)-1$. 
Let $f \colon \mH^0(L^{\otimes 2}) \to \bC$ 
be a linear function. 
The goal of this section is 
to compute the GV invariants associated to $L$ and $f$. 
We apply the strategy explained in Remark \ref{rmk:strategy}. 

\subsection{Stratification of the Hitchin base}
We have the following diagram, in which the square is Cartesian:
\begin{equation*}
\begin{tikzcd}
U_\varepsilon=f^{-1}(\epsilon) \ar[r, hook] \ar[rd, hook]
& B_L^*\coloneqq \mH^0(L^{\otimes 2}) \setminus \{0\} \ar[d] 
& \\
& \bP \mH^0(L^{\otimes 2}) \ar[r] \ar[d, hook] 
& \{L^{\otimes 2} \} \ar[d, hook] \\
& \Sym^6(C) \ar[r, "\AJ^6"] 
&\Pic^6(C). 
\end{tikzcd}
\end{equation*}
We define $H_\infty \coloneqq \bP \mH^0(L^{\otimes 2}) \setminus U_\varepsilon$.  By a partition $\lambda\vdash n$ of a number $n\in\bZ_{\geq 1}$ we mean a weakly descending sequence $(\lambda_1,\ldots,\lambda_{r(\lambda)})$ of integers $\lambda_i\in \bZ_{\geq 1}$ such that $\sum_{i=1}^{r(\lambda)}\lambda_i=n$.  We write $\mu\vdash\lambda$ to mean that the partition $\mu$ is a refinement of the partition $\lambda$, i.e. there is a partition of the set $\{1,\ldots,r(\mu)\}$ so that $\lambda$ is the partition of $n=\lvert \mu\lvert$ induced by $\mu$ along with this partition of sets.  We have a natural stratification $\{S_\lambda\}_{\lambda\vdash 6}$ of $\Sym^6(C)$, indexed by partitions of $6$: 
\begin{equation*}
S_\lambda \coloneqq \{
\lambda_1 x_1 + \cdots +\lambda_m x_m : x_i \in C, x_i \neq x_j \text{ for } i \neq j
\}. 
\end{equation*}
The closures of the strata are given by 
\begin{equation*}
\overline{S}_\lambda = \{
\lambda_1 x_1 + \cdots +\lambda_m x_m : x_i \in C
\}
=\coprod_{\lambda \vdash \mu} S_\mu. 
\end{equation*}

\subsubsection*{Overview}
Here we explain a general strategy for computing the Euler characteristic 
of the stratum $U_\varepsilon \cap S_\lambda$, 
which also works for $g(C)>2$.
\begin{enumerate}
\item We first construct a map 
$\Psi\colon Z \to \bP \mH^0(L^{\otimes 2}) \cap \overline{S}_\lambda$ from a smooth variety $Z$, as follows: 
Consider the diagram 
\[
\begin{tikzcd}
& &C \times C^{\times m} 
\ar[ld, "\sigma"'] \ar[rd, "\mu"]& \\
&C & &C^{\times m}. 
\end{tikzcd}
\]
On $C^{\times m}$, we have a natural morphism of vector bundles: 
\begin{equation}
\label{eq:degenmap}
\alpha \colon 
\mH^0(L^{\otimes2}) \otimes \mcO_{C^{\times m}} \to 
\mu_*(\sigma^*L^{\otimes 2} \otimes \mcO_{\lambda_1 \Delta_1+ \cdots +\lambda_m \Delta_m}), 
\end{equation}
where 
\[
\Delta_i \coloneqq \{
(x, (y_j)_{1\leq j\leq m}) \in C \times C^{\times m} \colon x=y_i\}. 
\]
The restriction of $\alpha$ to a point 
$(y_j)_{1\leq j\leq m} \in C^{\times m}$ is the natural map 
\[
\mH^0(L^{\otimes 2}) \to 
L^{\otimes 2}|_{\lambda_1y_1 + \cdots + \lambda_m y_m}. 
\]
We define $Z$ to be the $(\dim \mH^0(L^{\otimes 2})-1)$-th degeneracy locus of $\alpha$ 
(This variety $Z$ is called 
a \textit{generalized de Jonqui{\`e}res divisor} in the literature \cite{far23, ung21}). 
By construction, 
we have a natural map 
$\Psi\colon Z \to \bP \mH^0(L^{\otimes 2}) \cap \overline{S}_\lambda$. 

\item Next, we remove the pull back $D$ of $H_\infty$ from $Z$. 
$D$ can be similarly described as a degeneracy locus, by replacing 
$\mH^0(L^{\otimes 2})$ with 
its hyperplane $\{f=0\}$ in (\ref{eq:degenmap}). 

\item We then remove the deeper strata 
$S_\mu$, $\lambda \vdash \mu$. 
We have that 
\[
\Psi^{-1}(S_\mu \cap U_\varepsilon) \to 
S_\mu \cap U_\varepsilon
\]
is unramified of degree $d_{\mu,\lambda}$, 
where $d_{\mu,\lambda}$ is the number of ways 
to refine $\mu$ to $\lambda$. 
Finally, we obtain 
\[
d_\lambda \cdot \chi(S_\lambda \cap U_\varepsilon)
=\chi(Z)-\chi(D)-\sum_{\mu \vdash \lambda} d_{\mu,\lambda} \cdot \chi(S_\mu \cap U_\varepsilon) 
\]
\end{enumerate}
where $d_{\lambda}$ is the size of the automorphism group of $\lambda$, i.e.
\[
d_{\lambda}=\prod_{n\geq 1}\lvert \{i\geq 1:\lambda_i=n\}\lvert !.
\]
%\textcolor{blue}{
We have used the fact that if $S'\rightarrow S$ is an unramified $n:1$ cover of a variety $S$, then $\chi(S')=n\chi(S)$.  In what follows we will denote such $S'$ by $S^{[n\sco 1]}$; although there may be many such covers of $S$, the ambiguity in the notation will be irrelevant as long as we are only interested in the Euler characteristic of $S'$.%}
\begin{ex}
\begin{enumerate}
\item Suppose that $\bP \mH^0(L^{\otimes 2}) \cap S_\lambda$ is $0$-dimensional. 
Since $\bP \mH^0(L^{\otimes 2}) \subset \Sym^{2g(C)-1}(C)$ has codimension $g=\dim J(C)$ and the dimension of $S_\lambda$ 
is equal to the number of rows in $\lambda$, 
this happens precisely when 
\[
\#\{\text{rows in } \lambda\}=g(C). 
\]
In this case, 
\[
\chi(U_\varepsilon \cap S_\lambda)=\chi(Z)/d_\lambda, 
\]
is given by the classical de Jonqui{\`e}res formula \cite{deJ66}. 

\item Suppose that 
\[
k \coloneqq \#\{i \colon \lambda_i=1\} > g(C).  
\]
In this case, slightly modifying the above construction, we define $Z'$ via the Cartesian diagram: 
\[
\begin{tikzcd}
&Z' \ar[r] \ar[d] &\Sym^{k}(C) \ar[d] \\
&C^{\times (m-k)} \ar[r, "\eta"] &\Pic^k(C),
\end{tikzcd}
\]
where $\eta \colon C^{\times (m-k)} \to \Pic^k(C)$ is defined by 
\[
\eta((x_i))
=L^{\otimes 2}(-\lambda_1x_1- \cdots -\lambda_{m-k}x_{m-k}). 
\]
Since $k> g(C)$, the Abel--Jacobi map 
$\Sym^k(C) \to \Pic^k(C)$ is a projective space bundle, hence so is 
$Z' \to C^{\times (m-k)}$. 
It is easier to compute $\chi(Z')$ 
than to compute $\chi(Z)$ for a degeneracy locus $Z$. 

As in the strategy explained above, we then remove $D' \subset Z'$, the pull back of $H_\infty$. It will turn out that $D'$ is also a projective space bundle over $C^{\times (m-k)}$, one dimension less than $Z'$. Finally, we will remove the deeper strata from $Z'$ by the same formula as above. 

\end{enumerate}
\end{ex}
\begin{prop}
\label{prop:strHitchin}
Let $L$ and $f$ be general. 
Then we have 
\begin{equation} 
\label{eq:strHitchin}
\chi(U_\varepsilon \cap S_\lambda)=\begin{cases}
0 & (\lambda=(6)), \\
50 & (\lambda=(5, 1)), \\
128 & (\lambda=(4, 2)), \\
-216 & (\lambda=(4, 1, 1)), \\
81 & (\lambda=(3, 3)), \\
-668 & (\lambda=(3, 2, 1)), \\
542 & (\lambda=(3, 1, 1, 1)), \\
-128 & (\lambda=(2, 2, 2)), \\
968 & (\lambda=(2, 2, 1, 1)), \\
-1012 & (\lambda=(2, 1, 1, 1, 1)). 
\end{cases}
\end{equation}
\end
{prop}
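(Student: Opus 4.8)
The plan is to compute each $e(U_\varepsilon\cap S_\lambda)$ by induction on the number of rows $r(\lambda)$, using the two facts that $r(\lambda)=\dim S_\lambda$ and that $\bP\mH^0(L^{\otimes 2})$ is the fibre of $\AJ^6$ over $[L^{\otimes 2}]$, of dimension $\dim\Sym^6(C)-g=6-2=4$. For generic $L$ the intersection $\overline{S}_\lambda\cap\bP\mH^0(L^{\otimes 2})$ has the expected dimension $r(\lambda)-2$, and $U_\varepsilon$ is the complement of the hyperplane $H_\infty$. I would order the induction so that the coarsest partitions (fewest rows) are treated first: since $\overline{S}_\lambda=\coprod_{\lambda\vdash\mu}S_\mu$ involves only strata $S_\mu$ with $r(\mu)\leq r(\lambda)$, every correction term will already be known. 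The base case is $\lambda=(6)$: here $S_{(6)}\cap\bP\mH^0(L^{\otimes 2})$ is the set of $x\in C$ with $\mcO_C(6x)\cong L^{\otimes 2}$, and as $x\mapsto\mcO_C(6x)$ traces a curve inside the surface $\Pic^6(C)$, a generic $L$ avoids it, so the intersection is empty and $e(U_\varepsilon\cap S_{(6)})=0$.

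For the inductive step, fix $\lambda$ with $m=r(\lambda)$ rows and use the generalized de Jonqui\`eres divisor $Z=Z_\lambda\subset C^{\times m}$, the degeneracy locus of the evaluation map $\alpha$ of (\ref{eq:degenmap}), together with its hyperplane analogue $D$ obtained by replacing $\mH^0(L^{\otimes 2})$ with $\{f=0\}$. By Thom--Porteous the expected codimension of $Z$ in $C^{\times m}$ is $(5-4)(6-4)=2$, matching $\dim(\overline{S}_\lambda\cap\bP\mH^0(L^{\otimes 2}))=m-2$, while that of $D$ is $(4-3)(6-3)=3$, so $D$ is a divisor in $Z$ cut out over $H_\infty$. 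Genericity of $L$ and $f$ ensures that $Z$ and $D$ are smooth of expected dimension and that $\Psi\colon Z\to\bP\mH^0(L^{\otimes 2})\cap\overline{S}_\lambda$ restricts over each stratum $S_\mu$ to an unramified cover of degree $d_{\mu,\lambda}$ (of degree $d_\lambda$ over $S_\lambda$ itself). Euler characteristics are then multiplicative along these covers, and removing $e(D)$ deletes the part lying over $H_\infty$, giving
\[
d_\lambda\cdot e(U_\varepsilon\cap S_\lambda)=e(Z)-e(D)-\sum_{\lambda\vdash\mu,\ \mu\neq\lambda}d_{\mu,\lambda}\cdot e(U_\varepsilon\cap S_\mu),
\]
in which every term on the right is already determined. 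The quantities $e(Z)$ and $e(D)$ I would extract by integrating the Thom--Porteous class of $\alpha$ over $C^{\times m}$: the Chern classes of $\mu_*(\sigma^*L^{\otimes 2}\otimes\mcO_{\lambda_1\Delta_1+\cdots+\lambda_m\Delta_m})$ are controlled by $\deg(L^{\otimes 2})=6$, $g=2$, and the diagonal self-intersections, so in the zero-dimensional cases (the two-row partitions $(5,1),(4,2),(3,3)$) these reduce to the classical de Jonqui\`eres numbers. When the number of parts equal to $1$ exceeds $g=2$, I would instead use the projective-bundle model $Z'$ described in the Overview, where $\Sym^k(C)\to\Pic^k(C)$ is a $\bP^{k-2}$-bundle; then $e(Z')$ is read off immediately as the product of the Euler characteristics of copies of $C$ with $e(\bP^{k-2})=k-1$, bypassing the harder degeneracy-class integral.

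The main obstacle will be the transversality and genericity input rather than the arithmetic. One must verify, for generic $L\in\Pic^3(C)$ and generic $f$, that each $Z_\lambda$ and $D_\lambda$ is smooth of the expected dimension, that $\Psi$ is a genuine unramified cover of the asserted degree $d_{\mu,\lambda}$ over each stratum (so that no ramification corrections spoil the multiplicativity of $e$), and that $H_\infty$ meets the finite strata in general position. These reduce to Brill--Noether-type general-position statements for the linear system $|L^{\otimes 2}|$ and the configurations $\lambda_1y_1+\cdots+\lambda_my_m$ on $C$. Granting this, the remaining task—producing the ten values of (\ref{eq:strHitchin})—is an explicit but routine bookkeeping of Chern-class integrals, de Jonqui\`eres numbers, and the refinement multiplicities $d_{\mu,\lambda}$, carried out partition by partition in increasing order of $r(\lambda)$.
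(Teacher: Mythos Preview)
Your inductive framework and the recursion
\[
d_\lambda\cdot e(U_\varepsilon\cap S_\lambda)=e(Z)-e(D)-\sum_{\lambda\vdash\mu,\ \mu\neq\lambda}d_{\mu,\lambda}\cdot e(U_\varepsilon\cap S_\mu)
\]
are exactly what the paper uses, and your treatment of the zero-dimensional strata via de~Jonqui\`eres numbers and of the strata with $k>g$ ones via the projective-bundle model $Z'$ matches the paper as well.

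There is, however, one genuine gap. For the positive-dimensional cases not covered by the $\bP^{k-2}$-bundle trick---namely $\lambda\in\{(4,1,1),(3,2,1),(2,2,2),(2,2,1,1)\}$---your proposal to ``extract $e(Z)$ and $e(D)$ by integrating the Thom--Porteous class of $\alpha$'' does not work as stated: Thom--Porteous produces the cycle class $[Z]\in\mH^*(C^{\times m})$, and integrating it gives a degree, not an Euler characteristic. For a one- or two-dimensional $Z$ these are entirely different numbers. The paper closes this gap case by case: for $(4,1,1)$ it identifies $Z\cong C$ directly; for $(2,2,2)$ it identifies $Z$ with $16$ disjoint conics via the square roots of $L^{\otimes 2}$; for $(2,2,1,1)$ it recognises $Z$ as the blowup of $C\times C$ at $32$ points and then runs a GRR computation to get $e(D)=-176$; and for the hardest case $(3,2,1)$ it invokes Pragacz's Euler-characteristic formula for degeneracy loci \cite[Example~5.8(i)]{pra88} together with a full GRR calculation of $\ch(\mcF)$ to obtain $e(Z)=-196$. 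None of this is quite ``routine bookkeeping'': the $(3,2,1)$ computation alone occupies its own subsection. So your outline is correct in spirit, but you need either Pragacz's formula or these ad hoc geometric identifications to actually produce $e(Z)$ and $e(D)$ in the intermediate cases; Thom--Porteous by itself is not enough.
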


\subsubsection{$\lambda=(6)$}
We want to find $x \in C$ such that $\mcO(6x) \cong L^{\otimes 2}$. 
The locus $\{\mcO(6x) : x \in C\} \subset \Pic^6(C)$ 
is the image of the following composition: 
\begin{equation*}
C \cong \Theta \subset 
\Pic^1(C) \xrightarrow{(-)^{\otimes 6}}
\Pic^6(C), 
\end{equation*}
i.e., a divisor in $\Pic^6(C)$. 
Hence, by choosing $L \in \Pic^3(C)$ general, 
we have $L^{\otimes 2} \notin \{\mcO(6x) : x \in C\} \subset \Pic^6(C)$, 
i.e., 
$S_{(6)} \cap \bP \mH^0(L^{\otimes 2}) = \emptyset$.

\subsubsection{$\lambda=(5, 1)$}
Consider the following composition: 
\begin{equation*}
\begin{tikzcd}
\eta \colon 
\Pic^1(C) \ar[r, "(-)^{\otimes 5}"] 
& \Pic^5(C) \ar[rr, "L^{\otimes 2} \otimes (-)^{-1}"] 
&&\Pic^1(C). 
\end{tikzcd}
\end{equation*}
We have 
\begin{equation*}
S_{(5, 1)} \cap \bP \mH^0(L^{\otimes 2})
=\{(x, y) \in \Theta \times \Theta : \eta(x)=y\}
=\{50 \text{ points} \}, 
\end{equation*}
where $50=\eta_* \Theta . \Theta=5^2\Theta^2$. 
Note also that we have chosen a general $L$ so that 
$\eta(\Theta)$ and $\Theta$ intersect properly. By taking the linear function $f$ to be generic, 
the divisor $H_\infty$ does not intersect with 
the finite set of points $S_{(5, 1)} \cap \bP \mH^0(L^{\otimes 2})$.

\subsubsection{$\lambda=(4, 2)$}
Similarly to the case of $\lambda=(5, 1)$, we get 
$S_{(2, 4)} \cap U_\varepsilon=\{128 \text{ points}\}$, 
where $128=4^2 \cdot 2^2 \cdot \Theta^2$.

\subsubsection{$\lambda=(4, 1, 1)$}
Let $Z \subset C \times \Sym^2(C)$ be the subvariety defined by 
$\mcO_C(4x+y+z)\cong L^{\otimes 2}$. 
We claim that there is an isomorphism 
$\psi \colon C \rightarrow Z$ 
sending $x$ to the inverse under $\Psi$ of the unique section of $L^{\otimes 2}$ 
vanishing to order (at least) 4 at $x$.  
This follows from the following fact: for all $x\in C$ there is an equality $h^0(L^{\otimes 2}(-4x))=1$. 
This in turn follows from genericity of $L$ and the Riemann-Roch theorem. 

We calculate
\[
Z \cap H_\infty=36
\]
as follows. 
The divisor $H_{\infty}$ is linearly equivalent to the divisor $D_p$ defined 
by the vanishing of the section $s$ at $p$, for some fixed $p\in C$.  
If a section $s=\psi(q)$ 
in $Z$ vanishes at $p$, 
either $p=x$, in which case the intersection multiplicity is $4$, %\textcolor{blue}{
since the regular function defining $D_p$ vanishes to order 4,%} 
or $p$ is one of the other two zeros of $\Psi(q)$. 
Counting these points corresponds to counting solutions to 
$L^{\otimes 2}(-p)\cong \mathcal{O}_C(4x+y)$. 
But now this is the same theta divisor calculation as we have seen before, 
and we find that there are $4^2*1^2*2$ such points.  
So $Z \cap H_{\infty}=32+4=36$. 

We also need to remove deeper strata from $Z$. 
We find that it contains one copy each of $S_{(5,1)}$ and $S_{(4,2)}$. 
So finally we calculate
\[
\chi(S_{(4,1,1)} \cap U_\varepsilon)=-2-36-50-128=-216.
 \]

\subsubsection{$\lambda=(3, 3)$}
As in the case of $\lambda=(5, 1)$, 
the number of solutions to $\mcO_C(3x+3y) \cong L^{\otimes 2}$ is
\[
3^2 \cdot 3^2 \cdot \Theta^2=162. 
\]
Here we are double counting: 
two solutions $(x,y)$ and $(x',y')$ with $x=y'$ and $y=x'$ 
should only count for one point.  So we find 
\[
\chi(S_{(3, 3)} \cap U_\varepsilon)=81. 
\]

\subsubsection{$\lambda=(3, 2, 1)$}
Let $Z \subset C \times C \times C$ be the subvariety 
defined by $\mcO_C(3x+2y+z) \cong L^{\otimes 2}$. 
We claim that $\chi(Z)=-196$. 
We postpone the proof until the next subsection, see Proposition \ref{prop:Euler321}. 

Fixing some $p\in C$, the preimage $D$ of the divisor $H_{\infty}$ is equivalent to the divisor 
$3\{p\}\times C\times C+2C\times \{p\}\times C+C\times C\times \{p\}$. 
So $D$ consists of $132=3*8+2*18+72$ points.  
These numbers come from the same calculation of intersections of theta divisors as before, so for example $72$ appears via $72=2*2^2*3^2$.  $Z$ also contains some copies of the deeper strata: 
it contains one copy of $S_{(4,2)}$ (corresponding to $z=x$), 
one copy of $S_{(5,1)}$ given by setting $x=y$, 
and a variety $S^{[2\sco 1]}_{(3,3)}$ (corresponding to $y=z$). 
So putting everything together we find
\[
\chi(S_{(3,2,1)} \cap U_\varepsilon)=-196-132-128-50-162
=-668.
\]

\subsubsection{$\lambda=(3, 1^3)$}
Consider $Z$, the subvariety of pairs $(x,(y,z,w))\in C\times \Sym^3(C)$ 
giving solutions to $\mathcal{O}_C(3x+y+z+w)\cong L^{\otimes 2}$. 
Projecting along the morphism that forgets $y,z,w$, 
this is a $\mathbb{P}^1$-bundle $\pi\colon Z\rightarrow C$, and so has Euler characteristic $-4=2*(-2)$.

We claim that the pull back $D$ of $H_\infty$ to $Z$ is isomorphic to $C$. 
To show this, it is enough to prove that $D.F=1$, 
where $F$ is the fibre class. 
The fibre of $\pi$ over a point $x_0 \in C$ is 
$\pi^{-1}(x_0)=\bP \mH^0(L^{\otimes 2}(-3x_0))$. 
The map 
$\pi^{-1}(x_0) \rightarrow \bP \mH^0(L^{\otimes 2})$ 
is induced by a natural embedding 
$u\colon \mH^0(L^{\otimes 2}(-3x_0)) \hookrightarrow \mH^0(L^{\otimes 2})$ and hence is linear, and meets the hyperplane $H_{\epsilon}$ at a single point.  Since $f$ is generic, and $C$ is one dimensional, the image of $u$ intersects $H_{\epsilon}$ transversally for all $x_0$.

Next we remove deeper strata: 
one copy of $S_{(3,2,1)}$, one copy of $S_{(4,1,1)}$, 
a variety $S^{[2\sco 1]}_{(3,3)}$, one copy of $S_{(4,2)}$, and one copy of $S_{(5,1)}$. 
Putting everything together we find 
\begin{align*}
\chi(S_{(3,1,1,1)} \cap U_\varepsilon)
&=-4-(-2)-(-668)-(-216)-2*81-128-50 \\
&=542. 
\end{align*}

\subsubsection{$\lambda=(2^3)$}
Consider the commutative diagram: 
\begin{equation*}
\begin{tikzcd}
\Sym^3(C) \ar[r, hook, "2(-)"] \ar[d, "\AJ^3"] 
&\Sym^6(C) \ar[d, "\AJ^6"] \\
\Pic^3(C) \ar[r, "(-)^{\otimes 2}"] 
&\Pic^6(C). 
\end{tikzcd}
\end{equation*}
The closure $\overline{S}_{(2^3)} \cap \bP \mH^0(L^{\otimes 2})$ is 
the fibre $\eta^{-1}(L^{\otimes 2})$, where 
$\eta=\AJ^6 \circ 2(-)=(-)^{\otimes 2} \circ \AJ^3$.  The morphism $\AJ^3$ is a $\bP^1$-fibration. 
Hence, we have 
\begin{equation*}
\overline{S}_{(2^3)} \cap \bP \mH^0(L^{\otimes 2})=
\coprod_{16} \bP^1. 
\end{equation*}
The embeddings $\bP^1 \hookrightarrow \bP \mH^0(L^{\otimes 2})$ 
are quadric, induced by 
\begin{equation*}
\mH^0(M) \hookrightarrow \mH^0(L^{\otimes 2}), \quad 
s \mapsto s^{\otimes 2}, 
\end{equation*}
where $M$ is a choice of a square root of $L^{\otimes 2}$.
Hence, $\bP^1 \cap H_{\infty}=\{2 \text{ points}\}$. 

We conclude that 
\begin{equation*}
\overline{S}_{(2^3)} \cap U_\varepsilon = 
\coprod_{16} (\bP^1 \setminus \{2 \text{ points}\}). 
\end{equation*}
By removing the deeper stratum $S_{(4, 2)}$, we obtain 
\[
\chi(U_\varepsilon \cap S_{(2^3)})=
16 (\chi(\bP^1)-2)-128=-128.  
\]

\subsubsection{$\lambda=(2, 2, 1, 1)$}
Let $Z \subset C \times C \times \Sym^2(C)$ be the subvariety defined by 
$\mcO_C(2x+2y+z+w) \cong L^{\otimes 2}$. 
The projection $\pi \colon Z \to C\times C$ is given by forgetting $z$ and $w$, 
and is the blowup of $C^{\times 2}$ at 32 points 
corresponding to solutions to $L^{\otimes 2}(-2x-2y)\cong \omega_C$.  
So 
\[
\chi(Z)=4+32=36. 
\]
Let $D$ be the pull back of $H_\infty$ via $Z \to \bP \mH^0(L^{\otimes 2})$. 

\begin{lem}
We have $\chi(D)=-176$. 
\end{lem}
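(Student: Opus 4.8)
The plan is to compute $e(D)$ by regarding $D$ as a divisor on the smooth projective surface $Z$ and applying the adjunction formula. Recall that $Z$ is the blow-up $b\colon Z\to C\times C$ at the $32$ points found above, with exceptional curves $E_1,\dots,E_{32}$, and that $D=\Psi^{-1}(H_\infty)$ is the preimage under the morphism $\Psi\colon Z\to\bP\mH^0(L^{\otimes 2})$ of the hyperplane $H_\infty=\{f=0\}$. Since $\Psi$ is an everywhere-defined morphism, the linear system $\Psi^*\lvert\mcO(1)\rvert$ is base-point free, so for generic $f$ (which we may assume) Bertini's theorem guarantees that $D$ is a smooth curve. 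Then $e(D)=-D\cdot(D+K_Z)$, and it remains to determine the class $[D]\in\Pic(Z)$ together with $K_Z=b^*K_{C\times C}+\sum_iE_i$.

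First I would pin down $[D]$. Writing $\mcL\coloneqq\Psi^*\mcO(1)$, we have $\mcL=b^*A-\sum_{i=1}^{32}m_iE_i$ with $A=b_*\mcL\in\NS(C\times C)$. The restriction of $\Psi$ to each exceptional curve $E_i\cong\bP^1$ is the linear embedding $\bP\mH^0(\omega_C)\hookrightarrow\bP\mH^0(L^{\otimes 2})$ induced by $t\mapsto s_0\cdot t$, where $s_0\in\mH^0(L^{\otimes 2}\otimes\omega_C^{-1})$ cuts out the corresponding double points; hence $\Psi|_{E_i}$ has degree one and $m_i=\mcL\cdot E_i=1$. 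For $C$ general the group $\NS(C\times C)$ is freely generated by the fibre classes $F_1=\{\mathrm{pt}\}\times C$, $F_2=C\times\{\mathrm{pt}\}$ and the diagonal $\Delta_C$, with $F_1^2=F_2^2=0$, $F_1\cdot F_2=1$, $\Delta_C\cdot F_i=1$ and $\Delta_C^2=2-2g(C)=-2$, so it is enough to compute the three numbers $A\cdot F_1$, $A\cdot F_2$ and $A\cdot\Delta_C$.

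These I would compute geometrically, by counting how many of the relevant sections pass through a fixed generic point $q\in C$, exactly in the style of the theta-divisor counts above (with $\Theta^2=2$). For $A\cdot F_1$ we fix $x_0$ and vary $y$; the section $s_{x_0,y}$, with divisor $2x_0+2y+R_y$ and $R_y\sim L^{\otimes 2}(-2x_0-2y)$, meets the hyperplane $\{s(q)=0\}$ either when $y=q$, contributing multiplicity $2$ because of the double zero, or when $q\in R_y$, which by the theta-divisor computation contributes $2^2\cdot\Theta^2=8$; thus $A\cdot F_1=A\cdot F_2=10$. For $A\cdot\Delta_C$ we move the quadruple point of $s_{x,x}$, whose divisor is $4x+R'_x$ with $R'_x\sim L^{\otimes 2}(-4x)$: the collision $x=q$ now contributes multiplicity $4$ (equivalently, $\mcL|_{\Delta_C}$ is the third principal-parts bundle of $L^{\otimes 2}$), while $q\in R'_x$ contributes $4^2\cdot\Theta^2=32$, so $A\cdot\Delta_C=36$. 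Solving in the above basis gives $A=14F_1+14F_2-4\Delta_C$, whence $A^2=14\cdot 10+14\cdot 10-4\cdot 36=136$ and $A\cdot K_{C\times C}=A\cdot(2F_1+2F_2)=40$.

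Finally I would assemble the answer. From $\mcL=b^*A-\sum_iE_i$ and $K_Z=b^*K_{C\times C}+\sum_iE_i$ one computes $D^2=A^2-32$ and $D\cdot K_Z=A\cdot K_{C\times C}+32$, so the contributions of the exceptional curves cancel and $e(D)=-D\cdot(D+K_Z)=-A^2-A\cdot K_{C\times C}=-136-40=-176$. The main obstacle is the determination of the class $A$, and in particular the computation of $A\cdot\Delta_C$: one must correctly identify the order-four contact at the collision $x=q$, and before adjunction can be invoked one must also justify the description of $Z$ as the stated blow-up and the smoothness of the generic member $D$.
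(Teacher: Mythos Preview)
Your proof is correct and follows the same overall strategy as the paper: both identify $Z$ with the blow-up of $C\times C$ at the $32$ points, write $D=\pi^*A-\sum_iE_i$, and compute $e(D)=-(D^2+D\cdot K_Z)=-(A^2+A\cdot K_{C\times C})$ via adjunction, arriving at the same class $A=14F_1+14F_2-4\Delta_C$ and the same final value $-176$.

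The only substantive difference is in how the class $A$ is determined. The paper computes $\ch_1(\mcF)$ for $\mcF=\mu_*(\sigma^*L^{\otimes 2}(-2\Delta_1-2\Delta_2))$ via Grothendieck--Riemann--Roch, obtaining $\phi^*\mcO(1)=\mcO_{C\times C}(14(f_1+f_2)-4\Delta)$ directly. You instead compute the three intersection numbers $A\cdot F_1$, $A\cdot F_2$, $A\cdot\Delta_C$ by the same style of theta-divisor counts used elsewhere in the paper (e.g.\ for $\lambda=(5,1)$ and $\lambda=(4,1,1)$), and then solve for $A$ in the basis $F_1,F_2,\Delta_C$. Your approach is more elementary and keeps the argument uniform with the surrounding computations; the paper's GRR approach is more systematic and does not require the auxiliary hypothesis that $\NS(C\times C)$ has rank three (though this is harmless here, since $C$ and $L$ are already assumed general).
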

\begin{proof}
Consider the rational map $\phi \colon C \times C \dashrightarrow \bP \mH^0(L^{\otimes 2})$ 
which sends generic $(x, y) \in C \times C$ to the one-dimensional subspace $\mH^0(L^{\otimes 2}(-2x-2y))\subset \mH^0(L^{\otimes 2})$. 
As discussed above, the indeterminacy locus of this rational map is $32$ points in $C \times C$. 
In the following, we calculate the line bundle inducing the rational map $\phi$. 
Consider the diagram: 
\[
\begin{tikzcd}
& &C \times C^{\times 2} \ar[ld, "\sigma"'] \ar[rd, "\mu"] & \\
&C & &C^{\times 2}, 
\end{tikzcd}
\]
where $\sigma$ is the first projection and 
$\mu$ is the projection forgetting the first factor. 
Then the line bundle inducing the rational map $\phi$ is the double dual of 
$\mcF\coloneqq \mu_*(\sigma^*L^{\otimes 2}(-2\Delta_1-2\Delta_2))$, where 
\[
\Delta_i \coloneqq \{(x, (y_1, y_2)) \in C \times C^{\times 2} \colon x=y_i\}, \quad i=1, 2. 
\]
We calculate the first Chern character of $\mcF$ by the Grothendieck-Riemann-Roch formula. 
Let $\eta \in \mH^2(C \times C^{\times 2}, \bQ)$ denote the dual class of $\pt \times C^{\times 2}$. From now on we freely refer to cohomology classes by using their dual homology classes.
We have 
\begin{align*}
&\quad \ch(\sigma^*L^{\otimes 2}(-2\Delta_1-2\Delta_2)).\td_\mu \\
&=\left(1, -2\Delta_1-2\Delta_2, \frac{4(\Delta_1+\Delta_2)^2}{2}, * \right).(1, 6\eta, 0, 0).(1, -\eta, 0, 0) \\
&=(1, 5\eta-2\Delta_1-2\Delta_2, *_2, *_3), 
\end{align*}
where 
\begin{align*}
*_2
&=2(\Delta_1+\Delta_2)^2-10\eta.(\Delta_1+\Delta_2) \\
&=2(-2C_3+2\delta-2C_2)-10(C_3+C_2) \\
&=-14C_3-14C_2+4\delta, 
\end{align*}
and $\delta$ denotes the class of the small diagonal, 
$C_2$ denotes the class of $\pt \times C \times \pt$, and 
$C_3$ is the class of $\pt \times \pt \times C$. 
The intersection $\Delta_1^2=-2C_3$ can be calculated as follows: 
\[
\Delta_1^2
=p_{12}^*(\Delta^2)
=-2p_{12}^*(\pt), 
\]
where $p_{12} \colon C \times (C^{\times 2}) \to C \times C$ is given by forgetting the third factor, 
and $\Delta \subset C \times C$ is the diagonal. The other parts of the calculations for $*_2$ are similar. 

Hence, we obtain 
$\ch_1(\mcF)=-14(f_1+f_2)+4\Delta$, and 
\[
\phi^*\mcO(1)=\mcO_{C \times C}(14(f_1+f_2)-4\Delta)
\]
where $f_1$ is the class of $C\times \pt$ and $f_2$ is the class of $\pt\times C$.
Now, the map $Z \to \bP \mH^0(L^{\otimes 2})$ is induced by the strict transform of 
$\pi^*(14(f_1+f_2)-4\Delta)$ and hence 
\[
D=\pi^*(14(f_1+f_2)-4\Delta)-e, 
\]
where $e$ denotes the exceptional divisor of $\pi$. 
We obtain: 
\begin{align*}
-\chi(D)
=&\deg K_{D} \\
=&(K_Z+D).D \\
=&((\pi^*(2f_1+2f_2)+e)+(\pi^*(14(f_1+f_2)-4\Delta)-e)) \\ &.(\pi^*(14(f_1+f_2)-4\Delta)-e) \\
=&(16(f_1+f_2)-4\Delta).(14(f_1+f_2)-4\Delta) \\
=&2 \cdot 16 \cdot 14-2 \cdot (16+14) \cdot 4 +4^2 \cdot (-2)\\
=&176. 
\end{align*}
\end{proof}
$Z$ contains points corresponding to deeper strata, 
for example it contains three copies of $S_{(4,2)}$ 
given by either setting $z=w=x$, or $z=w=y$, or $x=y$ and $z=w$. 

In all, we must remove three copies of $S_{(4,2)}$, 
a double cover $S^{[2\sco 1]}_{(3,3)}$, two copies of $S_{(3,2,1)}$, 
a cover $S^{[6\sco 1]}_{(2,2,2)}$, one copy of $S_{(5,1)}$, 
and one copy of $S_{(4,1,1)}$.  
The number six deserves some explanation. 
Let $(a,b,c)$ be a point of $\Sym^6(C)$, lying in the image of $S_{(2,2,2)}$. 
Then we can lift it to a point of $Z$ in six different ways, the first by setting $x=a,y=b,z=w=c$, 
and the other five by permuting $a,b,c$ in these three equalities.  
Finally, note that after removing $T$, 
the union of all of these contributions from deeper strata, 
the cover $Z\setminus T\rightarrow S_{(2,2,1,1)}$ is $2:1$.  
Putting this all together, we find
\begin{align*}
&\quad \chi(S_{(2,2,1,1)}) \\
&=\left(36-(-176)-3*128-2*81-2*(-668)-6*(-128)-50-(-216)\right)/2 \\
&=968.
\end{align*}

\subsubsection{$\lambda=(2, 1^4)$}
We consider solutions $Z\subset C\times \Sym^4(C)$ 
to $\mathcal{O}_C(2x+y+z+w+t)\cong L^{\otimes 2}$.  
This is a $\mathbb{P}^2$-bundle $r\colon Z\rightarrow C$, where $r$ forgets $y,z,w,t$. 
So 
\[
\chi(P)=3*(-2)=-6.  
\]
Let $D$ be the pull back of $H_\infty$ to $Z$, 
and $F$ a fibre of $r$. 
As in the case $\lambda=(3, 1, 1, 1)$, 
we can see that $D \cap F$ is isomorphic to $\bP^1$, linearly embedded into $F$. 
In other words, $D$ is a $\bP^1$-bundle over $C$ 
and so $\chi(D)=-4=(-2)*2$. 

We need to remove the deeper strata: 
\[
S^{[2\sco 1]}_{(2,2,1,1)}+ S_{(3,1,1,1)}+ S^{[3\sco 1]}_{(2,2,2)}+2S_{(3,2,1)}+S_{(4,1,1)}+S^{[2\sco 1]}_{(3,3)}+2S_{(4,2)}+S_{(5,1)}
\]
%\textcolor{blue}{
The degree of a cover $mS^{[n\sco 1]}_{\pi}\rightarrow S_{\pi}$ is $mn$.  The above degrees are given as follows: a point in, say $S_{(3,2,1)}$, is represented by a tuple $(x_0,\ldots ,x_5)\in\Sym^6(C)$ with $x_0=x_1=x_2$ and $x_3=x_4$.  In order to lift to $Z$ we need to pick one out of $x_0$ or $x_3$ to be the value of $x$.  In general the degree of the cover over $S_{\pi}$ is given by counting the number of rows in $\pi$ of length at least $2$. %}

Putting this all together we calculate
\begin{align*}
\chi(S_{(2,1^4)} \cap U_\varepsilon)
=&-6-(-4)-2*968-542-3*(-128)-2*(-668)\\
&-(-216)-2*81-2*128-50\\
=&-1012. 
\end{align*}

\subsection{A degeneracy locus calculation}
Let $Z \subset C \times C \times C$ be the subvariety 
defined by $\mcO_C(3x+2y+z) \cong L^{\otimes 2}$. 
The goal of this subsection is to compute the Euler characteristic of $Z$. 
To do so, we first describe $Z$ as a degeneracy locus. 
Consider the following diagram: 
\[
\begin{tikzcd}
& &C \times C^{\times 3} \ar[ld, "\sigma"'] \ar[rd, "\mu"] & \\
&C & &C^{\times 3}, 
\end{tikzcd}
\]
where $\sigma$ denotes the first projection and 
$\mu$ denotes the projection forgetting the first factor. 
On $C \times C^{\times 3}$, we have a natural map of vector bundles: 
\[
\sigma^*L^{\otimes 2} \to 
\sigma^*L^{\otimes 2} \otimes \mcO_{3\Delta_1+2\Delta_2+\Delta_3}, 
\]
where 
\[
\Delta_i \coloneqq \{(x, (y_1, y_2, y_3)) \in C \times C^{\times 3} \colon x=y_i\} 
\]
for $i=1, 2, 3$. 
Pushing this down via $\mu$, we obtain 
\[
\alpha \colon 
\mH^0(C, L^{\otimes 2}) \otimes \mcO_{C^{\times 3}} \to 
\mcF\coloneqq \mu_*(\sigma^*L^{\otimes 2} \otimes \mcO_{3\Delta_1+2\Delta_2+\Delta_3}). 
\]
The fibre of the vector bundle $\mcF$ at a point $(x, y, z) \in C^{\times 3}$ is 
the $6$ dimensional vector space $L^{\otimes 2}|_{3x+2y+z}$. 

Hence, our variety $Z \subset C^{\times 3}$ is the $4$th degeneracy locus of 
$\alpha$, or equivalently, of $\alpha^\vee$. 

\begin{lem} \label{lem:Eulerdegen}
We have $\chi(Z)=c_2(\mcF)c_1(C^{\times 3})-c_1(\mcF)c_2(\mcF)-c_3(\mcF)$. 
\end{lem}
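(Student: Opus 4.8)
The plan is to realise $Z$ as the zero locus of a section of a rank-six bundle on a projective bundle, and then apply the standard formulas for the class and the normal bundle of such a zero locus. Recall that $\mcE \coloneqq \mH^0(L^{\otimes 2}) \otimes \mcO_{C^{\times 3}}$ is trivial of rank $h^0(L^{\otimes 2}) = 5$ (as $\deg L^{\otimes 2} = 6 > 2g(C)-2$), whereas $\mcF$ has rank $6$; thus the generic rank of $\alpha$ is $5$ and $Z = D_4(\alpha)$ is the locus where $\alpha$ fails to be injective. Let $\pi\colon \bP(\mcE) = C^{\times 3}\times\bP^4 \to C^{\times 3}$ be the projectivisation, with tautological inclusion $\mcO_{\bP(\mcE)}(-1)\hookrightarrow \pi^*\mcE$ and $\xi \coloneqq c_1(\mcO_{\bP(\mcE)}(1))$. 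The composite
\[
\mcO_{\bP(\mcE)}(-1) \hookrightarrow \pi^*\mcE \xrightarrow{\pi^*\alpha} \pi^*\mcF
\]
is a section $s$ of $\mcG \coloneqq \pi^*\mcF \otimes \mcO_{\bP(\mcE)}(1)$, a bundle of rank $6$, whose zero locus $\widetilde{Z}$ is $\{(p,[\ell]) : \ell \subseteq \ker\alpha_p\}$. For $p=(x,y,z)\in Z$ one has $L^{\otimes 2}(-3x-2y-z)\cong\mcO_C$ by definition of $Z$, so $\ker\alpha_p = \mH^0(L^{\otimes 2}(-3x-2y-z))$ is one-dimensional; hence $\pi$ restricts to an isomorphism $\widetilde{Z}\xrightarrow{\sim} Z$, and in particular $e(Z)=e(\widetilde{Z})$.

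The crux is then a transversality input: for the general line bundle $L$ of the theorem I claim $s$ is a regular section, equivalently that $Z$ (hence $\widetilde{Z}$) is smooth of the expected dimension $\dim\bP(\mcE) - \rk\mcG = 7 - 6 = 1$. Granting this, $\widetilde{Z}$ carries fundamental class $[\widetilde{Z}] = c_6(\mcG)$ and normal bundle $N_{\widetilde{Z}/\bP(\mcE)} \cong \mcG|_{\widetilde{Z}}$. Since $\widetilde Z$ is a smooth curve, $e(\widetilde Z) = \int_{\widetilde Z} c_1(T_{\widetilde Z})$, and the normal bundle sequence gives $c_1(T_{\widetilde Z}) = \bigl(c_1(T_{\bP(\mcE)}) - c_1(\mcG)\bigr)|_{\widetilde Z}$. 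Combining these with $e(Z)=e(\widetilde Z)$ yields
\[
e(Z) = \int_{\bP(\mcE)} c_6(\mcG)\bigl(c_1(T_{\bP(\mcE)}) - c_1(\mcG)\bigr).
\]

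What remains is a routine Chern-class computation on $\bP(\mcE)$. As $\mcE$ is trivial of rank $5$ we have $\xi^5 = 0$ and $\pi_*(\xi^4) = 1$. The relative Euler sequence gives $c_1(T_{\bP(\mcE)}) = 5\xi + \pi^*c_1(C^{\times 3})$, while $c_1(\mcG) = 6\xi + \pi^*c_1(\mcF)$ and $c_6(\mcG) = \sum_{i=0}^{6}\pi^*c_i(\mcF)\,\xi^{6-i}$. Using $\xi^5=\xi^6=0$ together with $c_i(\mcF)=0$ for $i\geq 4$ (since $\dim C^{\times 3}=3$), one gets $c_6(\mcG) = \pi^*c_2(\mcF)\xi^4 + \pi^*c_3(\mcF)\xi^3$ and $c_1(T_{\bP(\mcE)}) - c_1(\mcG) = -\xi + \pi^*\bigl(c_1(C^{\times 3}) - c_1(\mcF)\bigr)$. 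Multiplying these, discarding the terms annihilated by $\xi^5=0$ or lying in $A^{\geq 4}(C^{\times 3})=0$, and retaining the coefficient of $\xi^4$ before applying $\pi_*$, produces exactly $c_2(\mcF)c_1(C^{\times 3}) - c_1(\mcF)c_2(\mcF) - c_3(\mcF)$, as claimed.

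I expect the transversality/smoothness statement in the second paragraph to be the main obstacle, the rest being formal. This is precisely the kind of genericity property established for generalised de Jonqui\`eres loci \cite{far23, ung21}, and I anticipate resolving it either by citing those results or by a direct tangent-space computation, controlling the fibres of $Z$ via the Riemann--Roch estimate $h^0(L^{\otimes 2}(-3x-2y-z))\leq 1$ valid for general $L$.
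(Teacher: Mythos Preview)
Your argument is correct and arrives at the same formula, but via a different route from the paper. The paper simply invokes a ready-made formula for the Euler characteristic of a degeneracy locus from Pragacz \cite[Example 5.8 (i)]{pra88}, applied to $\alpha^\vee\colon \mcF^\vee \to V\otimes\mcO$ with $(m,n,r)=(6,5,4)$, and then simplifies the resulting expression in Segre classes $s_i(V\otimes\mcO-\mcF^\vee)$ using the triviality of $V\otimes\mcO$. Your approach instead \emph{derives} the formula from scratch: you resolve the degeneracy locus by passing to $\bP(\mcE)$, realise it as the zero scheme of a regular section of $\pi^*\mcF\otimes\mcO(1)$, and then compute $e$ by adjunction and the projection formula. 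This is more self-contained and transparent---one sees exactly where each term comes from---at the cost of having to establish the regularity of the section yourself, whereas the paper effectively hides that hypothesis inside the citation. Note that your verification that $\ker\alpha_p$ is one-dimensional is in fact automatic (a degree-zero line bundle on a curve of genus $\geq 1$ has $h^0\leq 1$), so the only genuine genericity input you still owe is the expected dimension and smoothness of $Z$; this is the standard Bertini-type statement for de Jonqui\`eres loci that you already flag.
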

\begin{proof}
For simplicity, we put $V=\mH^0(C, L^{\otimes 2})$.
We will apply \cite[Example 5.8 (i)]{pra88} 
to the map 
\[
\alpha^\vee \colon \mcF^\vee \to V \otimes \mcO. 
\]
Note that we have 
$m=6, n=5 ,r=4$ in the notations of \textit{loc. cit.}. 
We first fix some more notation. 
For a vector bundle $E$, we denote by 
$s_i(E)=(-1)^ic_i(-E)$ the $i$-th Segre class of $E$, and for two vector bundles $E$ and $E'$, we put 
\[
s_i(E-E')\coloneqq
\sum_{p=0}^i(-1)^{i-p}s_p(E)c_{i-p}(E'). 
\]
Finally, we put 
\[
s_{(2, 1)}(E-E')\coloneqq 
s_{2}(E-E')s_{1}(E-E')-s_{3}(E-E')s_{0}(E-E'). 
\]
Now, by \cite[Example 5.8 (i)]{pra88}, we have 
\begin{align*}
\chi(Z)&=
s_{2}(V \otimes \mcO-\mcF^\vee)c_1(C^{\times 3})
-s_{(2, 1)}(V \otimes \mcO-\mcF^\vee)
-2s_{3}(V \otimes \mcO-\mcF^\vee) \\
&=c_2(\mcF^\vee)c_1(C^{\times 3})
-(-c_2(\mcF^\vee)c_1(\mcF^\vee)+c_3(\mcF^\vee))
+2c_3(\mcF^\vee) \\
&=c_2(\mcF) c_1(C^{\times 3})-c_1(\mcF)c_2(\mcF)-c_3(\mcF)
\end{align*}
as desired. 
\end{proof}

We will use the following cohomology classes to express the Chern classes of $\mcF$: 
\begin{itemize}
\item $\eta\coloneqq \sigma^*\pt$, 
\item $\Delta_i \coloneqq \{(x, (y_1, y_2, y_3)) \in C \times C^{\times 3} \colon x=y_i\}$ for $i=1, 2, 3$, 
\item $\Delta_{ij} \coloneqq \{(x, (y_1, y_3, y_3)) \in C \times C^{\times 3} \colon x=y_i=y_j\}$ 
for $1 \leq i< j \leq 3$, 
\item $C_{ij} \coloneqq \tilde{p}_{ij}^*\pt$, 
where $\tilde{p}_{ij} \colon C \times C^{\times 3} \to C \times C$ denotes 
the projection \textit{forgetting} the $(i+1)$th and $(j+1)$th factors for $1\leq i < j \leq 3$, 
\item $C_i \coloneqq \tilde{p}_i^*\pt$, 
where $\tilde{p}_{i} \colon C \times C^{\times 3} \to C^{\times 3}$ denotes 
the projection forgetting the $(i+1)$th factor for $i=1, 2, 3$, 
\item $\delta$ is the small diagonal in $C \times C^{\times 3}$. 
\end{itemize}
By abuse of notation, we denote the pushforward via $\mu$ of the above classes by the same symbols. We first compute the Chern character of $\mcF$ by using the Grothendieck--Riemann--Roch formula: 
\begin{lem}
We have 
\begin{enumerate}
\item $\ch_0(\mcF)=6$, 
\item $\ch_1(\mcF)=24C_{23}+14C_{13}+6C_{12}-6\Delta_{12}-3\Delta_{13}-2\Delta_{23}$, 
\item $\ch_2(\mcF)=-60C_3-27C_2-16C_1+6\delta$, 
\item $\ch_3(\mcF)=66 \pt$. 
\end{enumerate}
\end{lem}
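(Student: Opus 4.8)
The plan is to apply Grothendieck--Riemann--Roch (GRR) to the proper morphism $\mu$. First I would record, on $C \times C^{\times 3}$, the short exact sequence
\[
0 \to \sigma^*L^{\otimes 2}(-D) \to \sigma^*L^{\otimes 2} \to \sigma^*L^{\otimes 2} \otimes \mcO_D \to 0, \qquad D \coloneqq 3\Delta_1 + 2\Delta_2 + \Delta_3,
\]
so that $\ch(\sigma^*L^{\otimes 2} \otimes \mcO_D) = \ch(\sigma^*L^{\otimes 2})(1 - e^{-D})$. The support of $\sigma^*L^{\otimes 2} \otimes \mcO_D$ lies on $\Delta_1 \cup \Delta_2 \cup \Delta_3$, each component mapping isomorphically onto $C^{\times 3}$, so the support is finite over the base, the higher direct images under $\mu$ vanish, and $\mcF = \mu_*(\sigma^*L^{\otimes 2}\otimes\mcO_D)$ is the honest (derived) pushforward, a vector bundle of rank $3+2+1=6$. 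Then GRR reads $\ch(\mcF) = \mu_*\bigl(\ch(\sigma^*L^{\otimes 2}\otimes\mcO_D)\cdot \td_\mu\bigr)$.

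Next I would assemble the two scalar inputs. Since $\deg L = 3$ and $\eta^2 = 0$, we have $\ch(\sigma^*L^{\otimes2}) = e^{6\eta} = 1 + 6\eta$. The relative tangent bundle of $\mu$ is $\sigma^*T_C$, and since $\deg(\omega_C) = 2g(C)-2 = 2$ and $\eta^2=0$ this gives $\td_\mu = 1 - \eta$. Hence $\ch(\sigma^*L^{\otimes 2})\cdot\td_\mu = (1+6\eta)(1-\eta) = 1 + 5\eta$, and the whole computation collapses to the compact master formula
\[
\ch(\mcF) = \mu_*\bigl((1 + 5\eta)(1 - e^{-D})\bigr).
\]

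I would then extract the answer degree by degree. Expanding $1 - e^{-D} = D - D^2/2 + D^3/6 - D^4/24$ (the term $D^5$ vanishes for dimension reasons) and multiplying by $1 + 5\eta$, the component lying in $H^{2k+2}(C\times C^{\times 3})$ pushes forward to $\ch_k(\mcF)$. Thus $\ch_0(\mcF) = \mu_* D$, $\ch_1(\mcF) = \mu_*\bigl(5\eta D - D^2/2\bigr)$, and so on. Each term reduces to the intersection theory of the diagonals on $C^{\times 4}$, governed by $\Delta_i \Delta_j = \Delta_{ij}$ for $i \neq j$, the excess relation $\Delta_i^2 = -2\,C_{jk}$ (with $\{j,k\}$ the complement of $i$, the factor $-2 = -\deg(\omega_C)$ being the self-intersection of the diagonal on $C\times C$), $\eta\Delta_i = C_{jk}$, and $\Delta_1\Delta_2\Delta_3 = \delta$. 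For example $\mu_*\Delta_i = 1$ since each $\Delta_i$ maps isomorphically onto $C^{\times 3}$, giving $\ch_0(\mcF) = 3+2+1 = 6$ at once; feeding the same relations into $\mu_*\bigl(5\eta D - D^2/2\bigr)$ and collecting the $C_{jk}$ and $\Delta_{ij}$ terms yields the stated $\ch_1(\mcF)$.

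The main obstacle is the bookkeeping for the top two components, which require $D^3$ and $D^4$ and hence triple and quadruple products of the diagonals. Here one must handle iterated self-intersections carefully: monomials such as $\Delta_i^2\Delta_j$, $\eta\Delta_i^2$ and $\eta\delta$ must be rewritten in the basis $\{C_i,\delta,\pt\}$ by repeatedly applying the normal-bundle relation $\Delta^2 = -\deg(\omega_C)[\pt]$ together with the vanishings $\eta^2 = 0$, $\eta\,C_{jk} = 0$ and $\Delta_i^3 = 0$. Once every monomial in the $\Delta_i$ and $\eta$ is reduced to a standard generator and pushed forward via $\mu$ (which lowers cohomological degree by two and sends a point class to a point class), the coefficients of $\ch_2(\mcF)$ and of the top point-class component, culminating in $66$, drop out. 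Everything apart from this combinatorial reduction of diagonal intersections is formal.
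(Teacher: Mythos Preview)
Your proposal is correct and follows essentially the same approach as the paper: both apply Grothendieck--Riemann--Roch for $\mu$, use the simplification $\ch(\sigma^*L^{\otimes 2})\cdot\td_\mu = 1 + 5\eta$, expand $\ch(\mcO_D)$ in powers of $D=3\Delta_1+2\Delta_2+\Delta_3$, and reduce via the diagonal calculus $\Delta_i\Delta_j=\Delta_{ij}$, $\Delta_i^2=-2C_{jk}$, $\eta\Delta_i=C_{jk}$, $\Delta_1\Delta_2\Delta_3=\delta$. Your packaging via the short exact sequence and the formula $\ch(\mcF)=\mu_*\bigl((1+5\eta)(1-e^{-D})\bigr)$ is slightly more streamlined, but the substance is identical; note also that item (4) is really $\ch_3(\mcF)$, since $\mcF$ lives on the three-fold $C^{\times 3}$.
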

\begin{proof}
We denote by $*_i$ the $i$th component of 
$\ch(\sigma^*L^{\otimes 2} \otimes \mcO_{3\Delta_1+2\Delta_2+\Delta_3}).\td_\mu$. 
It is obvious that 
\[
*_0=0, \quad *_1=3\Delta_1+2\Delta_2+\Delta_3. 
\]
Noting that 
$\ch(\sigma^*L^{\otimes 2}).\td_\mu=(1, 5\eta, 0, 0 ,0)$, we obtain 
\begin{align*}
*_2
&=-\frac{(3\Delta_1+2\Delta_2+\Delta_3)^2}{2}+5\eta.(3\Delta_1+2\Delta_2+\Delta_3) \\
&=-\frac{1}{2}(-18C_{23}-8C_{13}-2C_{12}+12\Delta_{12}+6\Delta_{13}+4\Delta_{23}) \\
&\quad+15C_{23}+10C_{13}+5C_{13} \\
&=24C_{23}+14C_{13}+6C_{12}-6\Delta_{12}-3\Delta_{13}-2\Delta_{23}. 
\end{align*}
\begin{align*}
*_3
&=\frac{1}{6}(3\Delta_1+2\Delta_2+\Delta_3)^3
-\frac{5}{2}\eta.(3\Delta_1+2\Delta_2+\Delta_3)^2 \\
&=\frac{1}{6}(-18C_{23}-8C_{13}-2C_{12}+12\Delta_{12}+6\Delta_{13}+4\Delta_{23}).
(3\Delta_1+2\Delta_2+\Delta_3) \\
&\quad-\frac{5}{2}\eta.
(-18C_{23}-8C_{13}-2C_{12}+12\Delta_{12}+6\Delta_{13}+4\Delta_{23}) \\
&=\frac{1}{6}(
-24C_3-6C_2+36(-2)C_3+18(-2)C_2+12\delta \\
&~\quad\quad -36C_3-4C_1+24(-2)C_3+12\delta+8(-2)C_1 \\
&~\quad\quad-18C_2-8C_1+12\delta+6(-2)C_2+4(-2)C_1) \\
&\quad+5(-6C_3-3C_2-2C_1) \\
&=-30C_3-12C_2-6C_1+6\delta-30C_3-15C_2-10C_1 \\
&=-60C_3-27C_2-16C_1+6\delta.
\end{align*}
\begin{align*}
*_4
&=-\frac{1}{4} \cdot \frac{1}{6}(3\Delta_1+2\Delta_2+\Delta_3)^4
+\frac{5}{6}\eta.(3\Delta_1+2\Delta_2+\Delta_3)^3 \\
&=-\frac{1}{4}(-30C_3-12C_2-6C_1+6\delta).(3\Delta_1+2\Delta_2+\Delta_3) \\
&\quad+5\eta.(-30C_3-12C_2-6C_1+6\delta) \\
&=-\frac{1}{4}(-18-36-24-24-30-12)+5 \cdot 6 \\
&=66. 
\end{align*}

Most of the above calculations are straightforward. 
Slightly non-trivial ones are, for example: 
\[
\Delta_1^2=-2C_{23}, \quad \Delta_1^3=0, \quad \delta.\Delta_1=-2\pt. 
\]
The first two equalities follow by observing that $\Delta_1=p_{01}^*\Delta$, 
where $p_{01}$ is the projection to the first two factors, 
and $\Delta \subset C \times C$ denotes the diagonal. 
Then the third one follows, as $\delta=\Delta_1.\Delta_{23}$. 

By pushing forward $*_i$ via $\mu$, we obtain the result. 
\end{proof}

\begin{prop} \label{prop:Euler321}
We have 
$\chi(Z)=-196$. 
\end{prop}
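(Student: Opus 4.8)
The plan is to compute $e(Z)$ directly from Lemma \ref{lem:Eulerdegen}, which reduces the problem to evaluating the three intersection numbers $c_2(\mcF)c_1(C^{\times 3})$, $c_1(\mcF)c_2(\mcF)$, and $c_3(\mcF)$ in the cohomology ring of $C^{\times 3}$. Since the preceding lemma already supplies the Chern \emph{character} components $\ch_0(\mcF),\ldots,\ch_4(\mcF)$ expressed in the basis of classes $C_i$, $C_{ij}$, $\delta$ (pushed forward via $\mu$), the first task is purely formal: convert the Chern character into Chern classes via the Newton identities $c_1=\ch_1$, $c_2=\tfrac12(\ch_1^2-2\ch_2)$, $c_3=\tfrac16(\ch_1^3-3\ch_1\ch_2+3\ch_3)$, and so on. Here I should be careful that $\ch_3(\mcF)$ was not listed, so I would first confirm whether $\ch_3$ is needed for the top-degree terms on $C^{\times 3}$ (a threefold, so only classes up to real codimension $3$, i.e.\ cohomological degree $6$, contribute to a number); indeed $c_3(\mcF)$ lives in $H^6$ and is what pairs to a number, so I expect $\ch_3(\mcF)$ to enter and may need to either extract it from the GRR computation or observe it is forced by the already-computed data.

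Next I would assemble the intersection-theoretic bookkeeping on $C^{\times 3}$. The key input is the multiplication table for the pushed-forward classes: since $\mu\colon C\times C^{\times 3}\to C^{\times 3}$ is the projection forgetting the first factor, the pushforwards $\mu_*(C_i)$, $\mu_*(C_{ij})$, $\mu_*(\delta)$ become standard classes on $C^{\times 3}$, namely the pullbacks of the point class along the various projections and the small-diagonal class. I would record how these multiply, using that $C$ has genus $2$ so that the only numerically nonzero products are those landing in $H^6(C^{\times 3})=H^2(C)^{\otimes 3}$ and evaluating to $\pm(\text{point})$; in particular the diagonal self-intersections contribute the Euler-characteristic factor $e(C)=-2=2-2g(C)$. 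With the multiplication rules fixed, evaluating $c_2(\mcF)c_1(C^{\times 3})$, $c_1(\mcF)c_2(\mcF)$, and $c_3(\mcF)$ becomes a finite (if intricate) tally, and summing them with the signs dictated by Lemma \ref{lem:Eulerdegen} should yield $-196$.

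The main obstacle I anticipate is purely combinatorial-numerical rather than conceptual: keeping the signs and coefficients straight when expanding products of the classes $C_i,C_{ij},\Delta_{ij},\delta$ after pushforward. The subtle points are exactly the ones flagged in the Chern-character lemma—identities such as $\Delta_1^2=-2C_{23}$, $\Delta_1^3=0$, and $\delta\cdot\Delta_1=-2\,\pt$—which encode the genus-two geometry through the factor $-2=e(C)$; an error in any of these propagates through $c_2$ and $c_3$. I would mitigate this by organizing the computation so that $c_1(\mcF)$ is read off directly from $\ch_1(\mcF)$, then computing $c_2(\mcF)=\tfrac12(\ch_1(\mcF)^2-2\ch_2(\mcF))$ and $c_3(\mcF)$ as explicit linear combinations of the basis classes in each codimension, and only at the very end pairing against $c_1(C^{\times 3})$ (which is $-2(C_1+C_2+C_3)$ up to the conventions in use) and reducing everything to multiples of the point class. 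A useful consistency check is that the top Chern class relation $c_3(\mcF)$ can be cross-validated against $\ch_4(\mcF)=66\,\pt$ via the degree-$4$ Newton identity restricted to $C^{\times 3}$, giving independent confirmation before I conclude $e(Z)=-196$.
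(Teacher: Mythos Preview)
Your proposal is correct and follows essentially the same route as the paper: apply Lemma~\ref{lem:Eulerdegen}, convert the Chern character to Chern classes via Newton's identities, and then evaluate the three intersection numbers on $C^{\times 3}$ using the diagonal relations already recorded.

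Two small clarifications. First, your worry about $\ch_3(\mcF)$ being absent is unfounded: the item labelled $\ch_4(\mcF)=66\,\pt$ in the preceding lemma is a typo for $\ch_3(\mcF)=66\,\pt$ (as is clear from the GRR computation of $*_4$ and from the final line of the paper's proof, where $-2\ch_3(\mcF)=-132$). Second, in the paper's notation the first Chern class of the base is $c_1(C^{\times 3})=(-2)(C_{23}+C_{13}+C_{12})$, i.e.\ expressed in the divisor classes $C_{ij}$ rather than the curve classes $C_i$; your ``up to conventions'' caveat covers this, but it is worth getting right before the final tally.
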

\begin{proof}
We need to calculate the Chern classes of $\mcF$ from its Chern character: 
\[
c_2(\mcF)=\frac{1}{2}\ch_1(\mcF)^2-\ch_2(\mcF); \quad 
c_3(\mcF)=\frac{1}{6}\ch_1(\mcF)^3-\ch_1(\mcF)\ch_2(\mcF)+2\ch_3(\mcF). 
\]

Firstly, 
\begin{align*}
\ch_1(\mcF)^2
&=(24C_{23}+14C_{13}+6C_{12}-6\Delta_{12}-3\Delta_{13}-2\Delta_{23})^2 \\
&=24(14C_3+6C_2-6C_3-3C_2-2C_{23}.\Delta_{23}) \\
&\quad+14(24C_3+6C_1-6C_3-3C_{13}.\Delta_{13}-2C_1) \\
&\quad+6(24C_2+14C_1-6C_{12}.\Delta_{12}-3C_2-2C_1) \\
&\quad-6(24C_3+14C_3+6C_{12}.\Delta_{12}-6(-2)C_3-3\delta-2\delta) \\
&\quad-3(24C_2+14C_{13}.\Delta_{13}+6C_2-6\delta-3(-2)C_2-2\delta) \\
&\quad-2(24C_{23}.\Delta_{23}+14C_1+6C_1-6\delta-3\delta-2(-2)C_1) \\
&=144C_3+90C_2+80C_1-96C_{23}.\Delta_{23}-84C_{13}.\Delta_{13}
-72C_{12}.\Delta_{12}+72\delta.
\end{align*}
We then have 
\begin{align*}
\ch_1(\mcF)^3
&=(144C_3+90C_2+80C_1-96C_{23}.\Delta_{23}-84C_{13}.\Delta_{13}
-72C_{12}.\Delta_{12}+72\delta). \\
&\quad (24C_{23}+14C_{13}+6C_{12}-6\Delta_{12}-3\Delta_{13}-2\Delta_{23}) \\
&=24(80-84-72+72)+14(90-96-72+72)+6(144-96-84+72) \\
&\quad-6(90+80-96-84-72(-2)+72(-2)) \\
&\quad-3(144+80-96-84(-2)-72+72(-2)) \\
&\quad-2(144+90-96(-2)-84-72+72(-2)) \\
&=-396, 
\end{align*}
and 
\begin{align*}
\ch_1(\mcF)\ch_2(\mcF)
&=-60(6-3-2)-27(14-6-2)-16(24-6-3) \\
&\quad+6(24+14+6-6(-2)-3(-2)-2(-2)) \\
&=-66. 
\end{align*}
We also have 
\begin{align*}
c_2(\mcF)c_1(C^{\times 3})
&=(\frac{1}{2}\ch_1(\mcF)^2-\ch_2(\mcF))(-2)(C_{23}+C_{13}+C_{12}) \\
&=(132C_3+72C_2+56C_1-48C_{23}.\Delta_{23}-42C_{13}.\Delta_{13}
-36C_{12}.\Delta_{12}+30\delta). \\
&\quad (-2)(C_{23}+C_{13}+C_{12}) \\
&=-2\{(56-42-36+30)+(72-48-36+30)+(132-48-42+30)\} \\
&=-196. 
\end{align*}

Finally, by Lemma \ref{lem:Eulerdegen}, we get 
\begin{align*}
\chi(Z)
&=c_2(\mcF)c_1(C^{\times 3})-c_1(\mcF)c_2(\mcF)-c_3(\mcF) \\
&=c_2(\mcF)c_1(C^{\times 3})
-\frac{2}{3}\ch_1(\mcF)^3+2\ch_1(\mcF)\ch_2(\mcF)-2\ch_3(\mcF) \\
&=-196+264-132-132=-196. 
\end{align*}
\end{proof}

\subsection{Fibrewise GV invariants}
In this subsection, we determine the fibrewise GV invariants associated to the spectral curves over points in $U_\varepsilon$. Given a partition $\lambda\vdash 6$ and an integer $g\geq 0$ we define $n_g(\lambda)$ to be the local contribution of a spectral curve corresponding to a point in $S_{\lambda}\cap U_{\epsilon}$ to the genus $g$ Gopakumar--Vafa invariant; by the formulas recalled in the proof of Proposition \ref{prop:GVfiber}, these contributions only depend on $\lambda$, so that these numbers are well-defined.
In order to do so, we calculate certain HOMFLY polynomials: 
We define the Laurent polynomials 
$P(T_{2, n})$ for $n \geq 0$ as follows: 
\begin{equation} \label{eq:HOMFLYdef}
\begin{aligned}
&P(T_{2, 0})=\frac{a-a^{-1}}{q-q^{-1}}, \\
&P(T_{2, 1})=1, \\
&P(T_{2, n})=-a(q-q^{-1})P(T_{2, n-1})+a^2P(T_{2, n-2}) \text{ for } n \geq 2. 
\end{aligned}
\end{equation}
This last equation is the standard recursion relation for HOMFLY polynomials of torus knots, and so $P(T_{2,n})$ is the HOMFLY polynomial of the torus knot $T_{2,n}$ (see \cite[Section 2]{os12}).  We will use the specialization $(q/a)^{n-1}P(T_{2, n})|_{a=0}$ for $2 \leq n \leq 5$:
\begin{align*}
(q/a)P(T_{2, 2})|_{a=0}=&
	q \cdot \{(q^{-1}-q)+\frac{1}{q^{-1}-q}\}, \\
(q/a)^2P(T_{2, 3})|_{a=0}= &
	q^2 \cdot\{(q^{-1}-q)^2+2\}, \\
(q/a)^3P(T_{2, 4})|_{a=0}=&
	q^3 \cdot \{(q^{-1}-q)^3+3(q^{-1}-q)+\frac{1}{q^{-1}-q}\}, \\
(q/a)^4P(T_{2, 5})|_{a=0}= &
	q^4 \cdot \{(q^{-1}-q)^4+4(q^{-1}-q)^2+3\}. 
\end{align*}

\begin{prop}
\label{prop:GVfiber}
The local contributions to the GV invariants satisfy $n_g(\lambda)=0$ for $g>6$ or $g<3$, and the remaining values are as in the following table
\begin{center}
\begin{tabular}{| c | c c c c| c|}
 \hline
 $\lambda$&$g=3$ & $4$ & $5$&$6$ &$\chi(U_{\epsilon}\cap S_{\lambda})$\\ 
 \hline
 $(5,1)$ & $0$ & $3$&$-4$&$1$&$50$ \\  
 $(4,2)$ & $-1$ &$4$&$-4$&$1$&$128$\\
 $(4,1,1)$&$0$&$1$&$-3$&$1$&$-216$\\
 $(3,3)$&$0$&$4$&$-4$&$1$&$81$\\
 $(3,2,1)$&$0$&$2$&$-3$&$1$&$-668$\\
 $(3,1^3)$&$0$&$0$&$-2$&$1$&$542$\\
 $(2^3)$&$-1$&$3$&$-3$&$1$&$-128$\\
 $(2,2,1,1)$&$0$&$1$&$-2$&$1$&$968$\\
 $(2,1^4)$&$0$&$0$&$-1$&$1$&$-1012$\\
 $(1^6)$&$0$&$0$&$0$&$1$&$256$\\
 \hline
\end{tabular}
\end{center}
We have also included the Euler characteristics of the corresponding strata of $U_{\epsilon}$, given by Proposition \ref{prop:strHitchin}, for reference.

\end{prop}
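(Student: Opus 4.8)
The plan is to compute, for each partition $\lambda\vdash 6$, the local Gopakumar--Vafa contributions $n_g(\lambda)$ of a spectral curve sitting over a point of $S_\lambda\cap U_\varepsilon$. The key geometric input is that such a spectral curve $C_a$, after removing the trace part, is a reduced double cover of $C$ branched at the zeros of a section of $L^{\otimes 2}$, and its singularities are completely determined by the combinatorial type $\lambda$: at a point where $\lambda_i$ sheets come together we see the plane curve singularity $y^2=x^{\lambda_i}$, which is the $A_{\lambda_i-1}$ singularity and is analytically the cone over the torus link $T_{2,\lambda_i}$. First I would invoke the results of Maulik--Shen \cite{ms13} and Migliorini--Shende(--Yun) \cite{my14} that express the perverse (BPS) filtration on the pushforward of $\IC$ along the Hilbert--Chow morphism in terms of the Euler characteristics of the Hilbert schemes $\Hilb^n(C_a)$, which in turn are governed by the HOMFLY polynomials of the links of the singularities via the Oblomkov--Shende conjecture \cite{os12}, now a theorem \cite{mau16}. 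This reduces the local contribution at each singular point to the specialization $(q/a)^{\lambda_i-1}P(T_{2,\lambda_i})|_{a=0}$ recorded in equation~(\ref{eq:HOMFLYdef}), and the contribution of the whole curve is the product over the singular points (together with a global factor coming from the smooth part and the genus of the normalization).

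The next step is purely bookkeeping: I would multiply the single-singularity factors according to the multiplicity structure of $\lambda$, expand the resulting Laurent polynomial in the variable $z=q^{-1}-q$ (equivalently in $q^{1/2}+q^{-1/2}$), and read off the coefficients in the GV normalization $\sum_g n_g(\lambda)(q^{-1/2}+q^{1/2})^{2g}$. For a smooth spectral curve, i.e. $\lambda=(1^6)$, the genus of $C_a$ equals $6$ (computed by Riemann--Hurwitz for the double cover branched at the $6$ zeros of the section of $L^{\otimes 2}$ on a genus-two base), which explains why $n_6=1$ and all lower $n_g$ vanish; each additional singularity of type $A_{\lambda_i-1}$ drops the geometric genus by $\lfloor \lambda_i/2\rfloor$, so the nonzero contributions are confined to the range $3\leq g\leq 6$, matching the claimed vanishing outside this window. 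I would present the product computation as a short table-generating calculation rather than writing out every expansion, since each entry is a routine manipulation of the four displayed specializations.

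The main obstacle I anticipate is matching conventions so that the local contributions genuinely depend only on $\lambda$ and not on the position of the point in the Hitchin base: one must check that the global geometry of $C_a$ (the normalization, which is always a fixed smooth curve of the appropriate genus once $L$ is generic, together with the delta invariants of the singularities) enters only through $\lambda$, and that the fibrewise formula of \cite{ms13, my14} applies verbatim even though our curves are singular but reduced. I would address this by noting that for $r=2$ the spectral curves over $U_\varepsilon$ are reduced with only planar singularities of type $A_{\lambda_i-1}$, so the support theorem and the local-global factorization of the HOMFLY side hold, and the normalization genus is constant along each stratum by genericity of $L$. Once these compatibilities are in place, the vanishing $n_g(\lambda)=0$ for $g>6$ is immediate (the top perverse degree is controlled by the arithmetic genus, which is $6$ for all $\lambda$), and the vanishing for $g<3$ follows from the fact that each singular stratum forces at least three units of genus to be absorbed into the BPS contributions of the singular points. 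The remaining entries of the table are then determined by the product formula, completing the proof.
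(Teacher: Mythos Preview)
Your approach is essentially the paper's: reduce the fibrewise contribution to Euler characteristics of $\Hilb^n(C_a)$ via \cite{ms13,my14}, then to HOMFLY polynomials of the links $T_{2,\lambda_i}$ via \cite{os12,mau16}, multiply the local factors, and expand in $(q^{-1}-q)$ to read off $n_g(\lambda)$; the paper simply writes out this product explicitly for each $\lambda$. One correction to your heuristic for the lower bound: it is not that ``each singular stratum forces at least three units of genus to be absorbed''---a single node ($\lambda=(2,1^4)$) absorbs only one---but rather that the \emph{maximum} total delta invariant $\sum_i\lfloor\lambda_i/2\rfloor$ over all $\lambda\vdash 6$ is $3$ (attained by $(4,2)$ and $(2^3)$), so the geometric genus of $C_a$ is always $\geq 6-3=3$, and $n_g(\lambda)$ vanishes below the geometric genus for each individual $\lambda$.
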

\begin{proof}
Let $C_a \to C$ be the spectral cover corresponding to a point 
$a \in S_{\lambda} \cap \bP \mH^0(L^{\otimes 2})$. 
For any partition $\lambda$, the spectral curve $C_a$ is reduced. 
Hence, by \cite{ms13, my14}, the fibrewise contribution $n_g(\lambda)$ is computed by 
the Euler characteristics of the Hilbert schemes of points on $C_a$. 
The latter is then computed by using certain HOMFLY polynomials. 
More precisely, \cite{ms13, my14} gives the first, and \cite{os12,mau16}, gives the second equality in the following sequence of equalities:
\begin{align}
\label{Hilb_Euler_formula}
\sum_{g \geq 0}q^{2g(C_a)-2}\!\cdot\!(-1)^{g}n_g(\lambda)(q^{-1}-q)^{2g-2} 
&=\int_{C_a^{[*]}}q^{2l}d\chi \\ \nonumber
&=(1-q^2)^{-\chi(C_a)}\prod_{i=1}^{r(\lambda)}[(q/a)^{\lambda_i-1}P(T_{2, \lambda_i})]_{a=0} \\ \nonumber
    &=q^{-\chi(C_a)}(q^{-1}-q)^{-\chi(C_a)}\prod_{i=1}^{r(\lambda)}[(q/a)^{\lambda_i-1}P(T_{2, \lambda_i})]_{a=0}.
\end{align}
where we denote by $r(\lambda)$ the number of rows of $\lambda$.  We give a little more detail, firstly in order to explain the sign $(-1)^g$ appearing in the LHS of \eqref{Hilb_Euler_formula}. 
Note that $2g(C_a)-2=10$ for all $a$.  By \cite{ms13, my14}, we have the following (cf. \cite{mt18}[Equation (4.4)]): 
\begin{equation*}
\sum_{l}\chi(\IC_{C_a^{[l]}})q^l=
q^{g(C_a)-1} \cdot
\frac{1}{(q^{-1/2}+q^{1/2})^2}\sum_i\chi(\Gr_P^i\IC(\overline{J}))q^i. 
\end{equation*}
After the variable change $q \mapsto q^2$, the LHS is equal to 
\[
\int_{C_a^{[*]}}(-1)^lq^{2l} d\chi, 
\]
and the RHS is equal to 
\[
q^{2g(C_a)-2}\sum_g n_g(\lambda)(q^{-1}+q)^{2g-2}. 
\]
By a further variable change $q^2 \mapsto -q^2$, we obtain the first equation in (\ref{Hilb_Euler_formula}). 

Let us also explain the second equality. Let $\{p_i\}$ be the set of singular points of $C_a$. By \cite[Conjecture 1]{os12} (which is proved in \cite{mau16}), we have 
\[
\int_{C_a^{[*]}}q^{2l}d\chi
=(1-q^2)^{-\chi(C_a)}\prod_{i}[(q/a)^{\mu_i}P(L_i)]_{a=0}, 
\]
where $\mu_i$ denotes the Milnor number of the singularity $p_i$ and $L_i$ denotes the link of $C_a$ at $p_i$. In our case, the singular points of $C_a$ are parametrised by the rows of the partition $\lambda=(\lambda_i)$, and the type of the singular point corresponding to $\lambda_i$ is $\{y^2=x^{\lambda_i}\}$. As a result, we have $\mu_i=\lambda_i-1$ and $L_i=T_{2, \lambda_i}$; these equalities imply the second equality in (\ref{Hilb_Euler_formula}). 

\begin{enumerate}
\item $\lambda=(5, 1)$: 
In this case, 
the spectral curve $C_a$ is a $2:1$ cover branched at two points. 
Hence $\chi(C_a)=2\chi(C)-2=-6$. 
It has a unique singular point of type $\{y^2=x^5\}$. 
Hence we have 
\begin{align*}
\int_{C_a^{[*]}}q^{2l}d\chi
&=q^6(q^{-1}-q)^6 [(q/a)^4P(T_{2,5})]_{a=0} \\
&=q^6(q^{-1}-q)^6 \cdot q^4 \cdot \{(q^{-1}-q)^4+4(q^{-1}-q)^2+3\} \\
&=q^{10}\{(q^{-1}-q)^{10}+4(q^{-1}-q)^8+3(q^{-1}-q)^6\}. 
\end{align*}

\item $\lambda=(4, 2)$: 
We have $\chi(C_a)=-6$. 
$C_a$ has two singular points, of types $\{y^2=x^2\}$ and $\{y^2=x^4\}$. 
We obtain: 
\begin{align*}
\int_{C_a^{[*]}}q^{2l}d\chi
&=q^6(q^{-1}-q)^6 [(q/a)^3P(T_{2,4})]_{a=0} \cdot [(q/a)P(T_{2,2})]_{a=0} \\
&=q^6(q^{-1}-q)^6 
\cdot q^3 \cdot \{(q^{-1}-q)^3+3(q^{-1}-q)+\frac{1}{q^{-1}-q}\} \\
&\quad \cdot q \cdot \{(q^{-1}-q)+\frac{1}{q^{-1}-q}\} \\
&=q^{10}\{(q^{-1}-q)^{10}+4(q^{-1}-q)^8+4(q^{-1}-q)^6+(q^{-1}-q)^4\}. 
\end{align*}

\item $\lambda=(4, 1, 1)$: 
We have $\chi(C_a)=2\chi(C)-3=-7$, 
$C_a$ has a unique singular point of type $\{y^2=x^4\}$. 
We obtain: 
\begin{align*}
\int_{C_a^{[*]}}q^{2l}d\chi
&=q^7(q^{-1}-q)^7 [(q/a)^3P(T_{2,4})]_{a=0} \\
&=q^7(q^{-1}-q)^7 
\cdot q^3 \cdot \{(q^{-1}-q)^3+3(q^{-1}-q)+\frac{1}{q^{-1}-q}\} \\
&=q^{10}\{(q^{-1}-q)^{10}+3(q^{-1}-q)^8+(q^{-1}-q)^6\}. 
\end{align*}

\item $\lambda=(3, 3)$: 
We have $\chi(C_a)=-6$, $C_a$ has two singular points of type $\{y^2=x^3\}$. 
So we obtain: 
\begin{align*}
\int_{C_a^{[*]}}q^{2l}d\chi
&=q^6(q^{-1}-q)^6 ([(q/a)^2P(T_{2,3})]_{a=0})^2 \\
&=q^6(q^{-1}-q)^6 \cdot q^4 \cdot\{(q^{-1}-q)^2+2\}^2 \\
&=q^{10}\{(q^{-1}-q)^{10}+4(q^{-1}-q)^8+4(q^{-1}-q)^6\}. 
\end{align*}

\item $\lambda=(3, 2, 1)$: 
We have $\chi(C_a)=-7$, $C_a$ has two singular points, of types $\{y^2=x^2\}$ and $\{y^2=x^3\}$. 
Hence we obtain: 
\begin{align*}
\int_{C_a^{[*]}}q^{2l}d\chi
&=q^7(q^{-1}-q)^7 [(q/a)^2P(T_{2,3})]_{a=0} \cdot [(q/a)P(T_{2,2})]_{a=0} \\
&=q^7(q^{-1}-q)^7 
\cdot q^2 \cdot\{(q^{-1}-q)^2+2\} 
\cdot q \cdot \{(q^{-1}-q)+\frac{1}{q^{-1}-q}\} \\
&=q^{10}\{(q^{-1}-q)^{10}+3(q^{-1}-q)^8+2(q^{-1}-q)^6\}. 
\end{align*}

\item $\lambda=(3, 1, 1, 1)$: 
We have $\chi(C_a)=-8$, $C_a$ has a unique singular point of type $\{y^2=x^3\}$. 
We obtain: 
\begin{align*}
\int_{C_a^{[*]}}q^{2l}d\chi
&=q^8(q^{-1}-q)^8 [(q/a)^2P(T_{2,3})]_{a=0} \\
&=q^8(q^{-1}-q)^8 
\cdot q^2 \cdot\{(q^{-1}-q)^2+2\} \\
&=q^{10}\{(q^{-1}-q)^{10}+2(q^{-1}-q)^8\}. 
\end{align*}

\item $\lambda=(2, 2, 2)$: 
We have $\chi(C_a)=-7$, $C_a$ has three singular points of type $\{y^2=x^2\}$. 
We obtain: 
\begin{align*}
\int_{C_a^{[*]}}q^{2l}d\chi
&=q^7(q^{-1}-q)^7 ([(q/a)P(T_{2,2})]_{a=0})^3 \\
&=q^7(q^{-1}-q)^7 \cdot q^3 \cdot \{(q^{-1}-q)+\frac{1}{q^{-1}-q}\}^3  \\
&=q^{10}\{(q^{-1}-q)^{10}+3(q^{-1}-q)^8+3(q^{-1}-q)^6+(q^{-1}-q)^4\}. 
\end{align*}

\item $\lambda=(2, 2, 1, 1)$: 
We have $\chi(C_a)=-8$, $C_a$ has two singular points of type $\{y^2=x^2\}$. 
We obtain: 
\begin{align*}
\int_{C_a^{[*]}}q^{2l}d\chi
&=q^8(q^{-1}-q)^8 ([(q/a)P(T_{2,2})]_{a=0})^2 \\
&=q^8(q^{-1}-q)^8 \cdot q^2 \cdot \{(q^{-1}-q)+\frac{1}{q^{-1}-q}\}^2  \\
&=q^{10}\{(q^{-1}-q)^{10}+2(q^{-1}-q)^8+(q^{-1}-q)^6\}. 
\end{align*}

\item $\lambda=(2, 1, 1, 1, 1)$: 
We have $\chi(C_a)=-9$, $C_a$ has a unique singular point of type $\{y^2=x^2\}$. 
We obtain: 
\begin{align*}
\int_{C_a^{[*]}}q^{2l}d\chi
&=q^9(q^{-1}-q)^9 [(q/a)P(T_{2,2})]_{a=0} \\
&=q^9(q^{-1}-q)^9 \cdot q \cdot \{(q^{-1}-q)+\frac{1}{q^{-1}-q}\}  \\
&=q^{10}\{(q^{-1}-q)^{10}+(q^{-1}-q)^8\}. 
\end{align*}

\end{enumerate}
\end{proof}

\subsection{Contributions from the nearby hyperplane}
Putting the discussions in the previous subsections together, 
we obtain: 
\begin{prop} \label{prop:GVnearby}
We have 
\[
n_{g}(U_\varepsilon)=\begin{cases}
1 & (g=6), \\
-8 & (g=5), \\
18 & (g=4), \\
0 & (\text{otherwise}). 
\end{cases}
\]
\end{prop}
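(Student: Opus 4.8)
The plan is to express $n_g(U_\varepsilon)$ as a weighted sum of the fibrewise contributions $n_g(\lambda)$ computed in Proposition \ref{prop:GVfiber}, the weights being the Euler characteristics $e(U_\varepsilon\cap S_\lambda)$ of Proposition \ref{prop:strHitchin}, and then to read off the answer by comparing coefficients. Concretely, I would prove the identity
\[
\sum_{i\in\bZ}\chi\bigl({}^p\mcH^i(\dR(h^0|_{U_\varepsilon})_*\IC_{(h^0)^{-1}(U_\varepsilon)})\bigr)q^i
=\sum_{\lambda\vdash 6}e(U_\varepsilon\cap S_\lambda)\sum_{g\geq 0}n_g(\lambda)(q^{-1/2}+q^{1/2})^{2g},
\]
whose left-hand side is by definition $\sum_g n_g(U_\varepsilon)(q^{-1/2}+q^{1/2})^{2g}$; matching the coefficient of each $(q^{-1/2}+q^{1/2})^{2g}$ then yields $n_g(U_\varepsilon)=\sum_{\lambda\vdash 6}n_g(\lambda)\,e(U_\varepsilon\cap S_\lambda)$.

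To justify this identity, I would use that every spectral curve $C_a$ over $a\in U_\varepsilon$ is reduced, so that the contribution of the relative Hilbert scheme / compactified Jacobian to the perverse-Euler-characteristic series is controlled fibrewise by the invariants of \cite{ms13, my14}. By \cite{mau16, os12} these fibrewise invariants depend only on the analytic types of the singularities of $C_a$, hence only on the stratum $S_\lambda$ containing $a$, and over $S_\lambda$ they assemble into the series $\sum_g n_g(\lambda)(q^{-1/2}+q^{1/2})^{2g}$. Combining this constancy along strata with the additivity of the Euler characteristic over the finite stratification $\{U_\varepsilon\cap S_\lambda\}_{\lambda\vdash 6}$ of $U_\varepsilon$ gives the displayed identity: each stratum contributes its Euler characteristic times the constant fibrewise series.

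The one weight not provided directly by Proposition \ref{prop:strHitchin} is that of the open stratum $S_{(1^6)}$ of smooth spectral curves. I would supply it from additivity of the Euler characteristic: since $U_\varepsilon$ is the complement of the hyperplane $H_\infty\cong\bP^3$ inside $\bP\mH^0(L^{\otimes 2})\cong\bP^4$, one has $e(U_\varepsilon)=5-4=1$, and subtracting the contributions of all deeper strata recorded in Proposition \ref{prop:strHitchin} leaves $e(U_\varepsilon\cap S_{(1^6)})=256$. Over this stratum $C_a$ is smooth of arithmetic (hence geometric) genus $6$, so $n_g((1^6))=\delta_{g,6}$, which is the last row of the table of Proposition \ref{prop:GVfiber}.

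With all eleven weights and all fibrewise invariants in hand, the remaining computation is purely arithmetic: the coefficient of $(q^{-1/2}+q^{1/2})^{12}$ recovers $n_6(U_\varepsilon)=\sum_\lambda e(U_\varepsilon\cap S_\lambda)=e(U_\varepsilon)=1$, while the coefficients of $(q^{-1/2}+q^{1/2})^{10}$ and $(q^{-1/2}+q^{1/2})^{8}$ give $n_5(U_\varepsilon)=-8$ and $n_4(U_\varepsilon)=18$, and the contributions at $g\leq 3$ cancel. I expect the only genuine obstacle to be the justification of the master identity, namely that the global perverse-Euler-characteristic series is assembled with no correction terms from the fibrewise data integrated against the strata; this is precisely where the support-theoretic results of \cite{ms13, my14} and the singularity-type dependence of \cite{mau16, os12} are needed.
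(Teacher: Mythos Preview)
Your proposal is correct and follows essentially the same route as the paper. The paper's proof simply asserts the integration formula $n_g(U_\varepsilon)=\sum_{\lambda\vdash 6} n_g(\lambda)\,e(U_\varepsilon\cap S_\lambda)$ (relying on the discussion in Remark~\ref{rmk:strategy} and the proof of Proposition~\ref{prop:GVfiber}) and then carries out the same arithmetic case-by-case; your extra paragraph justifying this identity via \cite{ms13,my14,mau16,os12} and stratum-wise constancy, together with your derivation of $e(U_\varepsilon\cap S_{(1^6)})=256$ from $e(U_\varepsilon)=1$, is sound and matches the paper's implicit reasoning (the value $256$ is in fact recorded in the table of Proposition~\ref{prop:GVfiber}).
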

\begin{proof}
We have 
\[
n_g(U_\varepsilon)=\sum_{\lambda\vdash 6} n_g(\lambda) \cdot \chi(U_\varepsilon \cap S_\lambda). 
\]

Since $n_6(\lambda)=1$ and $n_{g \leq 2}(\lambda)=0=n_{g \geq 7}(\lambda)$ 
for all partitions $\lambda\vdash 6$, 
we have $n_6(U_\varepsilon)=1$ and 
$n_{g \leq 2}(U_\varepsilon)=0=n_{g \geq 7}(U_\varepsilon)$. 

For $g=3$, we have 
\begin{align*}
n_3(U_\varepsilon)
&=-1 \cdot \chi(U_\varepsilon \cap S_{(4, 2)})
-1 \cdot \chi(U_\varepsilon \cap S_{(2, 2, 2)}) \\
&=-(128-128)=0. 
\end{align*}

For $g=4$, we have 
\begin{align*}
n_4(U_\varepsilon)
&=3\cdot50+4\cdot128+1\cdot(-216)+4\cdot81+2\cdot(-668) \\
&\quad+3\cdot(-128)+1\cdot968 \\
&=18. 
\end{align*}

For $g=5$, we have 
\begin{align*}
n_5(U_\varepsilon)
&=-4\cdot50-4\cdot128-3\cdot(-216)-4\cdot81-3\cdot(-668) \\
&\quad -2\cdot542-3\cdot(-128)-2\cdot968-1\cdot(-1012) \\
&=-8. 
\end{align*}
\end{proof}

\subsection{GV invariants for twisted Higgs bundles}

\begin{prop}
\label{prop:GVtwHiggs}
We have 
\begin{equation*}
n_{g, 2[C]}(\Tot_C(L))=\begin{cases}
-1 & (g=6), \\
8 & (g=5), \\
-18 & (g=4), \\
8 & (g=3), \\
-2 & (g=2), \\
0 & (\text{otherwise}). 
\end{cases}
\end{equation*}
\end{prop}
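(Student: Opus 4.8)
The plan is to compute the left-hand side of the defining identity in Definition \ref{GVdef}(2) directly from the known cohomology of the Hitchin moduli space, as outlined in Remark \ref{rmk:strategy}(1). First I would reduce to the traceless system. Using the Künneth decomposition $\dR h_*\IC_{M_L(2,1)}\cong\IC_{\mH^0(L)}\boxtimes \dR h^0_*\IC_{M^0_L(2,1)}$ from the proof of the trace/traceless lemma, the perverse pieces on $\widetilde{B}_L=\mH^0(L)\times B_L$ differ from those on $B_L$ by the factor $\chi(\IC_{\mH^0(L)})=(-1)^{\deg(L)-g+1}$, which equals $1$ since $\dim\mH^0(L)=2$. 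Hence the quantity $\sum_i\chi({}^p\mcH^i(\dR h_*\IC_{M_L(2,1)}))q^i$ defining $n_{g,2[C]}(\Tot_C(L))$ agrees with the corresponding expression for the traceless fibration $h^0\colon M^0_L(2,1)\to B_L$.

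Next I would convert this into a statement about the cohomology ring. By the decomposition theorem $\dR h^0_*\IC_{M^0_L(2,1)}\cong\bigoplus_i {}^p\mcH^i[-i]$, and since each ${}^p\mcH^i$ is $\bR_{>0}$-conic, Lemma \ref{lem:KS_conical} identifies $\chi({}^p\mcH^i)$ with the corresponding graded piece of the finite-dimensional cohomology $\mH^*(M^0_L(2,1))$ for the perverse filtration $P_\bullet$. Writing $\chi_i=\sum_n(-1)^n\dim\mathrm{Gr}^P_i\mH^n(M^0_L(2,1),\IC)$, one gets $\sum_i\chi({}^p\mcH^i)q^i=\sum_i\chi_i(-q)^i$; by relative hard Lefschetz this is a symmetric Laurent polynomial supported in perverse degrees $i\in[-6,6]$, where $6=\dim M^0_L(2,1)-\dim B_L=11-5$ is the genus of a generic spectral curve. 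So it suffices to determine the perverse numbers $\chi_i$.

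To compute the $\chi_i$ I would feed in the explicit presentation of $\mH^*(M^0_L(2,1))$ by generators $\alpha,\beta,\psi_1,\dots,\psi_{2g}$ and relations \cite{ht03,ht04}, together with their cohomological and perverse bidegrees \cite{dCHM12}. Since the perverse filtration is multiplicative, the perverse degree of a monomial is the sum of those of its factors, so the full bigraded Poincaré polynomial is pinned down by the generator bidegrees and the ideal of relations. Specializing the cohomological grading variable to $-1$ (accounting for the overall sign of the odd shift $\IC_{M^0_L(2,1)}=\bQ[11]$) yields the symmetric Laurent polynomial $\sum_i\chi_i(-q)^i$, and expanding it in the triangular basis $\{(q^{1/2}+q^{-1/2})^{2g}\}_{0\le g\le 6}$ reads off the integers $n_{g,2[C]}(\Tot_C(L))$, which I expect to match the stated table.

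The main obstacle is twofold. The more conceptual point is that \cite{ht03,ht04,dCHM12} are primarily phrased for the canonically twisted case $\mcV=\omega_C$, whereas here $\deg L=2g-1=3$; one must check that the presentation of the ring and, crucially, the perverse (equivalently, weight) bidegrees of the generators and relations are unchanged under this twist, so that the bookkeeping with $\dim M^0_L(2,1)=11$ and $g(C_a)=6$ is the correct one. The more technical point is the careful tracking of signs and shifts: the odd shift in $\IC$, the factor $(-q)^i$ versus $q^i$, and the alternating sum over cohomological degree must all be handled consistently for the output to emerge as the integral, symmetric sequence $(n_6,\dots,n_2)=(-1,8,-18,8,-2)$. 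Everything else is the lengthy but routine evaluation of the bigraded Poincaré polynomial from the generators and relations.
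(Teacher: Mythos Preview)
Your proposal is correct and follows essentially the same approach as the paper: reduce to the traceless/$\PGL$ system, invoke the conical contraction to pass to global cohomology, and then read off the perverse Euler numbers from the explicit description of the ring $\mH^*$ with its perverse filtration provided by \cite{ht03,ht04,dCHM12}. The paper goes one step further than you do, factoring $\mH^*(M)\cong \mH^*(\hat{M})\otimes \mH^*(\Pic^0(C))$ via the $\PGL$-moduli $\hat{M}$ (not just the traceless $M^0_L$, which still carries the Jacobian factor); this makes the bookkeeping cleaner and yields the closed form $-(q^{-4}-2q^{-2}+4-2q^2+q^4)(q^{-1/2}+q^{1/2})^4$, with the global sign explained by $\dim M=8g(C)-3$ being odd. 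Your flagged obstacle about the $L$-twisted versus $\omega_C$-twisted setting is exactly the point where the paper simply cites \cite[Theorems 1.2.10 and 4.2.2]{dCHM12} as applying; the paper does not elaborate further on this, so in that respect your caution is warranted but the argument is otherwise the same.
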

\begin{proof}
Let $\hat{M}$ denote the moduli space of semistable $L$-twisted $\PGL_n$-Higgs bundles. 

We first describe a basis of $\mH^i(\hat{M})$ for each $i$. By \cite[Subsection 6.3]{moz12}, adopting the notation of \textit{loc. cit.}, the Poincare polynomial of $\hat{M}$ is
\begin{equation} \label{eq:P(M)}
P_y(\hat{M})=P^{(1)}_{2, 2}(y)/(1+y)^4
=2y^8 + 4y^7 + 8y^6 + 4y^5 + 2y^4 + 4y^3 + y^2 + 1. 
\end{equation}
On the other hand, the cohomology ring $\mH^*(\hat{M})$ has the following  multiplicative generators \cite{dCHM12, ht03, ht04}: 
\[\alpha \in \mH^2(\hat{M}),\quad \beta \in \mH^4(\hat{M}),\quad \psi_i \in \mH^3(\hat{M}), \quad i=1, \cdots, 4. 
\]
Using the relations in \cite[Theorem 1.2.10, Proposition 4.2.1]{dCHM12} and the equation (\ref{eq:P(M)}), we have that 
\begin{align*}
&\mH^0(\hat{M})=\langle 1 \rangle, \quad 
\mH^1(\hat{M})=0, \quad 
\mH^2(\hat{M})=\langle \alpha \rangle, \quad 
\mH^3(\hat{M})=\langle \psi_i \rangle_{1 \leq i \leq 4}, \\
&\mH^4(\hat{M})=\langle \alpha^2, \beta \rangle, \quad 
\mH^5(\hat{M})=\langle \alpha\psi_i \rangle_{1 \leq i \leq 4}, \quad 
\mH^6(\hat{M})=\langle \psi_i\psi_j, \alpha^3, \alpha\beta \rangle_{1 \leq i < j \leq 4}, \\
&\mH^7(\hat{M})=\langle \alpha^2\psi_i \rangle_{1 \leq i \leq 4}, \quad 
\mH^8(\hat{M})=\langle \alpha^4, \alpha\gamma \rangle,  
\end{align*}
where we put $\gamma \coloneqq -2(\psi_1\psi_3+\psi_2\psi_4)$. 
In the following, we only explain the above equalities for $\mH^7(\hat{M})$ and $\mH^8(\hat{M})$. The arguments are similar and simpler for lower cohomological degrees. 
For $\mH^7(\hat{M})$, we have 
\[
\mH^7(\hat{M})=\langle \beta\psi_i, \alpha^2\psi_i  \rangle_{1 \leq i \leq 4}
=\langle \alpha^2\psi_i  \rangle_{1 \leq i \leq 4}, 
\]
where the second equality follows from \cite[Proposition 4.2.1]{dCHM12}. 
On the other hand, by (\ref{eq:P(M)}), we also know that $\dim \mH^7(\hat{M})=4$. Hence, we can conclude that $\{\alpha^2\psi_i\}_{1 \leq i \leq 4}$ is a basis of $\mH^7(\hat{M})$. 
For $\mH^8(\hat{M})$, we have 
\begin{equation} \label{eq:H8}
\begin{aligned}
\mH^8(\hat{M})
&=\langle \alpha^4, \alpha^2\beta, \alpha\psi_i\psi_j, \beta^2 \rangle_{1 \leq i < j \leq 4} \\
&=\langle \alpha^4, \alpha^2\beta, \alpha\psi_1\psi_2, \alpha\psi_1\psi_4, \alpha\psi_2\psi_3, \alpha\psi_3\psi_4, \alpha(\psi_1\psi_3-\psi_2\psi_4), \alpha\gamma, \beta^2 \rangle \\
&=\langle \alpha^4, \alpha\gamma \rangle. 
\end{aligned}
\end{equation}
To see the third equality, we claim that 
\[
\alpha \in I^0_3, \quad \beta^2 \in I^2_1, \quad 
\frac{\alpha^2\beta}{2}+2\alpha\gamma \in I^2_1, 
\]
where $I^a_b \subset \bQ[\alpha, \beta, \gamma]$ is an ideal defined in \cite[Definition 1.2.8]{dCHM12} for $a, b \geq 0$. 
Indeed, we have 
\[
\alpha=\rho^0_{1, 0, 0}, \quad 
\beta^2=\rho^3_{0, 2, 0}, \quad 
\frac{\alpha^2\beta}{2}+2\alpha\gamma =\rho^2_{2, 1, 0}, 
\]
where $\rho^c_{r, s, t} \in I^a_b$ are elements defined in \textit{loc. cit.}. 
Hence, we have that $\beta^2=0$ and $\alpha^2\beta \in \langle \alpha^4, \alpha\gamma \rangle$. 
Furthermore, since we have 
\begin{equation}\label{eq:psis}
\psi_1\psi_2, \psi_1\psi_4, \psi_2\psi_3, \psi_3\psi_4, (\psi_1\psi_3-\psi_2\psi_4) \in \Lambda^2_0, 
\end{equation}
with $\Lambda^2_0$ as defined in \textit{loc. cit.}, we conclude that the elements in (\ref{eq:psis}), multiplied by $\alpha\in I^0_3$, are all zero in $\mH^*(\hat{M})$. 
By the above arguments, the third equality in (\ref{eq:H8}) holds.

Now, we rearrange the above basis of $\mH^*(\hat{M})$ using the perverse degree instead of the cohomological degree. 
Note that the generators $\alpha, \beta, \psi_i$ all have perverse degree $2$ by \cite[Theorem 4.2.2]{dCHM12}. Hence, we obtain
\begin{align*}
&\mH_{\leq0}^*(\hat{M})=\mH_{\leq1}^*(\hat{M})=\langle 1 \rangle, \\
&\mH_{\leq2}^*(\hat{M})=\mH_{\leq3}^*(\hat{M})
=\langle \mH_{\leq 0}^*(\hat{M}),\alpha, \beta, \psi_i \rangle_{1\leq i \leq 4}, \\
&\mH_{\leq4}^*(\hat{M})=\mH_{\leq5}^*(\hat{M})
=\langle \mH_{\leq 2}^*(\hat{M}), 
\alpha^2, \alpha\beta, \alpha\psi_i, \psi_i\psi_j  \rangle_{1 \leq i< j \leq 4}, \\
&\mH_{\leq6}^*(\hat{M})=\mH_{\leq7}^*(\hat{M})=
\langle \mH_{\leq 4}^*(\hat{M}), 
\alpha^3, \alpha^2\psi_i, \alpha\gamma \rangle_{1\leq i \leq4}, \\
&\mH_{\leq8}^*(\hat{M})=
\langle \mH_{\leq 6}^*(\hat{M}), 
\alpha^4 \rangle.
\end{align*}
Here we have followed the notation and normalisation conventions of \cite{dCHM12}, regarding the perverse filtration on $\mH^*(\hat{M})$.  Precisely:
\[
\mH_{\leq i}^*(\hat{M})\coloneqq \mH^{*-a}(\tilde{B}_L,{}^{\mathfrak{p}}\tau_{\leq  i}h_*\bQ[a])
\]
where $a=\dim(\tilde{B}_L)=4g(C)-1$. 

Combined with the isomorphism 
$\mH^*(M) \cong \mH^*(\hat{M}) \otimes \mH^*(\Pic^0(C))$, 
we obtain 
\begin{align*}
\sum_i \chi({}^p\mcH^i({\dR h_* \IC_{M}}))q^i
=-(q^{-4}-2q^{-2}+4-2q^2+q^4)(q^{-1/2}+q^{1/2})^4, 
\end{align*}
from which we can read off the integers 
$n_{g, 2[C]}(\Tot_C(L))$. 
The minus sign in the right hand side comes from the fact that $\dim M=8g(C)-3$ is odd.
\end{proof}

\subsection{GV invariants for $\Tot(N)$}
\begin{thm}
\label{GV_N_thm}
Let $C$ be a genus two curve, and let $L$ be a generic line bundle on $C$ of degree $3$. 
Let $f \in (H^0(L^{\otimes 2}))^\vee \cong \Ext^1(L, L^{-1} \otimes \omega_C)$ 
be a generic linear function, $N$ the corresponding rank two vector bundle. 
Then we have 
\[
n_{g, 2[C]}(\Tot_C(N))=\begin{cases}
8 & (g=3), \\
-2 & (g=2), \\
0 & (\text{otherwise}). 
\end{cases}
\] 
In particular, the GV/GW correspondence holds for $\Tot_C(N)$ and the curve class $2[C]$. 
\end{thm}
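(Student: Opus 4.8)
The plan is to combine the two contributions computed in the preceding subsections via Proposition \ref{prop:GV=L+U}. By that proposition, for the generic bundle $N$ determined by the generic linear function $f$, we have the decomposition
\[
n_{g, 2[C]}(\Tot_C(N))=n_{g, 2[C]}(\Tot_C(L))+n_g(U_\varepsilon).
\]
Thus the entire proof reduces to adding together the two lists of integers already obtained: the twisted Higgs bundle contribution from Proposition \ref{prop:GVtwHiggs}, and the nearby hyperplane contribution from Proposition \ref{prop:GVnearby}.

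Concretely, I would first verify that $N$ is indeed a valid (generic, $2$-rigid) bundle to which Proposition \ref{prop:GV=L+U} applies: since $\deg(L)=3=2g(C)-1$ and $L$, $f$ are chosen generically, Corollary \ref{cor:rank2dchart} guarantees that $M_N^{st}(2,d)\cong\Crit(j)$ so that the GV invariants of $\Tot_C(N)$ are computed by Definition \ref{GVdef}, and the genericity hypotheses required in Propositions \ref{prop:strHitchin}, \ref{prop:GVtwHiggs}, and \ref{prop:GVnearby} are all met. Then I would simply tabulate the sum degree by degree. For $g=6$: $-1+1=0$. For $g=5$: $8+(-8)=0$. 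For $g=4$: $-18+18=0$. For $g=3$: $8+0=8$. For $g=2$: $-2+0=-2$. For all other $g$ both terms vanish. This yields precisely the claimed values $n_{3,2[C]}=8$, $n_{2,2[C]}=-2$, and zero otherwise.

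The striking feature here — and the only point requiring genuine commentary rather than arithmetic — is the cancellation in degrees $g=4,5,6$. The three largest genus contributions from the twisted Higgs bundle side exactly annihilate the nearby hyperplane contributions. I would remark that this cancellation is not a coincidence but reflects the geometry: the ``extra'' high-genus contributions to $n_{g,2[C]}(\Tot_C(L))$ coming from the singular spectral curves over the origin of the Hitchin base are precisely what must be subtracted off when passing from the line bundle $L$ to the rank two bundle $N$, and this subtraction is implemented through the vanishing cycle functor via Lemma \ref{lem:van=nearby+stalk}. The main (and essentially only) obstacle in this final step is bookkeeping: ensuring the sign conventions and perverse-degree normalisations in Propositions \ref{prop:GVtwHiggs} and \ref{prop:GVnearby} are mutually consistent so that the two lists may be added directly without an intervening sign correction. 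Since both were computed within the same normalisation (that of Definition \ref{GVdef}, with the overall sign from $\dim M$ being odd already absorbed in Proposition \ref{prop:GVtwHiggs}), no such correction arises.

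Finally, to conclude the GV/GW correspondence, I would compare the resulting GV invariants with the Gromov--Witten invariants of $\Tot_C(N)$ in curve class $2[C]$, as computed by Bryan--Pandharipande and recalled in Appendix \ref{sec:GW}. One substitutes the values $n_{3,2[C]}=8$, $n_{2,2[C]}=-2$ into the right-hand side of the GV/GW formula \eqref{eq:GVGW} of Conjecture \ref{conj:GVGW} and checks that the resulting generating series of $\lambda$-coefficients matches the known degree two Gromov--Witten partition function, thereby verifying Conjecture \ref{conj:GVGW} in this case.
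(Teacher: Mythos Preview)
Your proposal is correct and follows essentially the same route as the paper: invoke Proposition~\ref{prop:GV=L+U} to decompose $n_{g,2[C]}(\Tot_C(N))$ as the sum of the twisted Higgs contribution (Proposition~\ref{prop:GVtwHiggs}) and the nearby hyperplane contribution (Proposition~\ref{prop:GVnearby}), then add the two tables. The paper's proof is terser (it omits the degree-by-degree arithmetic and the explicit comparison with the Gromov--Witten side from Appendix~\ref{sec:GW}), but the logical structure is identical.
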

\begin{proof}
By Proposition \ref{prop:GV=L+U}, 
we have 
\[
n_{g, 2[C]}(\Tot_C(N))=n_g(U_\varepsilon)
+n_{g, 2[C]}(\Tot_C(L)).
\]
Now the result follows from Propositions \ref{prop:GVnearby} and \ref{prop:GVtwHiggs}.  
\end{proof}

\section{Higher genus} \label{sec:highergenus}
In this section, we fix a smooth projective curve $C$ of genus $g(C)\geq 3$ 
and a line bundle $L$ of degree $2g(C)-1$. 

\begin{prop}
\label{eq:GVLhigh}
We have 
\[
n_{g, 2[C]}(\Tot_C(L))=\begin{cases}
0 & (g \geq 4g(C)-1), \\
-1 & (g=4g(C)-2), \\
-2^{2g(C)-3} & (g=g(C)), \\
0 & (g \leq g(C)-1). 
\end{cases}
\]
\end{prop}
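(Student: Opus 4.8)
The plan is to run the argument of Proposition \ref{prop:GVtwHiggs} in arbitrary genus. Writing $M=M_L(2,1)$ and $u=(q^{-1/2}+q^{1/2})^2$, the de Cataldo--Hausel--Migliorini description of $\mH^*(\hat M)$ and the isomorphism $\mH^*(M)\cong \mH^*(\hat M)\otimes\mH^*(\Pic^0(C))$ factor the left-hand side of (\ref{eq:GVN}) as
\[
\sum_i\chi\bigl({}^p\mcH^i(\dR h_*\IC_{M})\bigr)q^i=(-1)^{\dim M}\,P_{\hat M}(q)\,u^{g(C)},
\]
where $P_{\hat M}(q)=\sum_i\chi({}^p\mcH^i(\dR \hat h_*\IC_{\hat M}))q^i$ is the perverse Poincar\'e polynomial of the twisted $\PGL_2$-Hitchin fibration $\hat h$ and $u^{g(C)}=(q^{-1/2}+q^{1/2})^{2g(C)}$ is the contribution of $\Pic^0(C)$. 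As $P_{\hat M}$ is a symmetric Laurent polynomial in $q$ it is an honest polynomial in $u$, and comparing with (\ref{eq:GVN}) gives $n_{g(C)+j,\,2[C]}(\Tot_C(L))=(-1)^{\dim M}[u^j]P_{\hat M}(q)$. The proposition is thus a statement about three coefficients of the $u$-expansion of $P_{\hat M}$: the top, the bottom, and the constant term.

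For the two extremal ranges I would argue exactly as in the genus two case, avoiding the (complicated) relations in $\mH^*(\hat M)$. The generic fibre of $\hat h$ is a Prym variety of dimension $3g(C)-2$, so relative hard Lefschetz shows that the top graded piece of the perverse filtration on $\mH^*(\hat M)$ is one-dimensional, spanned by $\alpha^{3g(C)-2}$; hence $\deg_u P_{\hat M}=3g(C)-2$ with leading coefficient $1$. Multiplying by $u^{g(C)}$ and using $(-1)^{\dim M}=(-1)^{8g(C)-3}=-1$ yields $n_{g,\,2[C]}(\Tot_C(L))=0$ for $g\geq 4g(C)-1$ and $n_{4g(C)-2,\,2[C]}(\Tot_C(L))=-1$. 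At the opposite end, polynomiality of $P_{\hat M}$ in $u$ means its lowest $u$-power is $u^0$, so the left-hand side has lowest $u$-power $u^{g(C)}$ and $n_{g,\,2[C]}(\Tot_C(L))=0$ for $g\leq g(C)-1$.

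The real content is the constant term, giving the value at $g=g(C)$: since $u=0$ corresponds to $q=-1$, we have $[u^0]P_{\hat M}=P_{\hat M}(-1)$, which, up to the sign absorbed in the normalisation of $P_{\hat M}$, is the topological Euler characteristic $e(\hat M)$. I would compute this by $\bC^{*}$-localisation for the scaling action on the Higgs field, using $e(\hat M)=e(\hat M^{\bC^{*}})$. The fixed locus where $\phi=0$ is the moduli of stable $\PGL_2$-bundles, whose Euler characteristic vanishes (Newstead's Poincar\'e polynomial specialises to $0$ at $-1$). The remaining fixed components parametrise Higgs bundles $E_1\oplus E_2$ with $\phi\colon E_1\to E_2\otimes L$; setting $\delta=E_1\otimes E_2^{-1}$ of degree $k$, the $w_2=1$ condition forces $k$ odd, stability forces $k>0$, and $\phi\neq 0$ forces $k\leq 2g(C)-1$. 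The assignment $[\phi]\mapsto\mathrm{div}(\phi)$, with $\delta=L(-\mathrm{div}(\phi))$ recovered from the divisor, identifies each such component with $\Sym^{2g(C)-1-k}(C)$. Summing $e(\Sym^{m}(C))=(-1)^m\binom{2g(C)-2}{m}$ over the even values $m=2g(C)-1-k$ gives
\[
e(\hat M)=\sum_{\substack{0\leq m\leq 2g(C)-2\\ m\ \mathrm{even}}}\binom{2g(C)-2}{m}=2^{2g(C)-3},
\]
and matching the overall sign with the genus two computation yields $n_{g(C),\,2[C]}(\Tot_C(L))=-2^{2g(C)-3}$.

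The main obstacle is this Euler-characteristic computation. One has to justify the localisation formula for the non-proper (semiprojective) Higgs moduli space, identify the fixed components together with the arithmetic constraints on $k$, and -- crucially -- observe that passing to $\PGL_2$ collapses the $2^{2g(C)}$-fold square-root ambiguity in the choice of $E_2$, so that the fixed loci are genuine symmetric products rather than \'etale covers of them; this is exactly what produces the clean binomial sum. The sign bookkeeping (through $(-1)^{\dim M}$, the $\IC$-shift, and the orientation of the symmetric products) also requires care. By contrast, the extremal ranges are formal consequences of the dCHM structure and relative hard Lefschetz, which is why the intermediate values $g(C)<g<4g(C)-2$ -- where one would need the full ring structure of $\mH^*(\hat M)$ -- are left open.
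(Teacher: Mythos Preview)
Your proposal is correct, and for the extremal ranges $g\geq 4g(C)-2$ and $g\leq g(C)-1$ it is essentially the same as the paper's proof: both use the factorisation $\mH^*(M)\cong\mH^*(\hat M)\otimes\mH^*(\Pic^0(C))$ to pull out the factor $u^{g(C)}$, and both identify the top perverse piece with a single copy of $\IC$ on the base (you phrase this via relative hard Lefschetz and the Prym dimension $3g(C)-2$, the paper via the genus of a smooth spectral curve and $\chi(\IC_{\widetilde B_L})$).

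The genuine difference is in how you compute $\chi(\hat M)=2^{2g(C)-3}$ for the value at $g=g(C)$. The paper simply quotes the closed formula for the $E$-polynomial of $M_L(2,1)$ from \cite{cdp11}, divides through by $(1-t)^{2g(C)}$, and specialises. You instead run Hitchin's $\bC^*$-localisation on the semiprojective variety $\hat M$: the nilpotent-cone contribution from stable bundles has vanishing Euler characteristic, and the Hodge-type fixed loci are identified with $\Sym^{2g(C)-1-k}(C)$ for odd $1\leq k\leq 2g(C)-1$, the passage to $\PGL_2$ killing exactly the $2^{2g(C)}$-fold square-root ambiguity; the binomial identity $\sum_{m\ \mathrm{even}}\binom{2g(C)-2}{m}=2^{2g(C)-3}$ then finishes. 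Your route is more geometric and self-contained, and makes transparent \emph{why} the answer is a power of two, at the cost of having to justify localisation for a non-proper space and the vanishing $\chi(N_{g(C)})=0$ for the stable-bundle moduli (both standard but worth citing). The paper's route is shorter once one is willing to import the Chuang--Diaconescu--Pan formula, but that formula is itself a substantial input.
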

\begin{proof}
For simplicity, we write $M=M_L(2, 1)$ 
and $\hat{M}=\hat{M}_L(2, 1)$. 
By \cite{cl16} (see also \cite[Theorem 0.4]{ms23}), we have 
\[
{}^p\mcH^i(\dR h_* \IC_M) \cong \IC(\wedge^i \dR^1\pi_* \IC_{\mcC}), \quad i \in \bZ, 
\]
where $U \subset \widetilde{B}_L$ is the dense open subset parametrising smooth spectral curves and $\pi \colon \mcC \to U$ is the universal curve. 
Since a smooth spectral curve has genus $4g(C)-2$,
the degree of the Laurent polynomial $\sum_{i \in \bZ} \chi({}^p\mcH^i(\dR h_* \IC_M))q^i$ is less than or equal to $4g(C)-2$. 
The coefficient of $q^{4g(C)-2}$ is 
\[
\chi(\IC_{\widetilde{B_L}})
=(-1)^{\dim \widetilde{B}_L} \cdot \chi(\bQ_{\widetilde{B}_L})
=-1, 
\]
where $\dim \widetilde{B}_L=4g(C)-1$.
This proves that 
\[
n_{g, 2[C]}(\Tot_C(L))=\begin{cases}
0 & (g \geq 4g(C)-1), \\
-1 & (g=4g(C)-2). 
\end{cases}
\]

To determine the GV invariants for small $g$, recall that we have an isomorphism 
\[
\mH^*(M) \cong 
\mH^*(\Pic^0(C)) \otimes \mH^*(\hat{M}). 
\]
Moreover, the generators $\epsilon_i$ 
of $\mH^*(\Pic^0(C))$ have 
cohomological degree one and perverse degree one. Hence we have 
\[
\sum_{i \in \bZ} \chi({}^p\mcH^i(\dR h_* \IC_M))q^i
=(q^{-\frac{1}{2}}+q^{\frac{1}{2}})^{2g(C)}
\cdot \big(\sum_{i \in \bZ} \chi({}^p\mcH^i(\dR \hat{h}_* \IC_{\hat{M}}))q^i \big). 
\]
From this, we can see that 
$n_{g, 2[C]}(\Tot_C(L))=0$ 
for $g \leq g(C)-1$. 
Furthermore, we have 
$n_{g, 2[C]}(\Tot_C(L))=-\chi(\hat{M})$. 
This can be computed as follows: 
Let $P_t(M)$ denote the Poincare polynomial of $M$. 
We can compute $P_{-t}(M)$ by substituting $u=v=-t$ in \cite[Equation (A.7)]{cdp11}: 
\begin{align*}
&\quad P_{-t}(M) \\
&=(1-t)^{2g(C)} \cdot \Bigg\{
\frac{(1-t)^{2g(C)-2}(1+t+t^2)^{2g(C)}}{(1+t)^2(1+t^2)}
+\frac{(1+t)^{2g(C)}}{4(1+t^2)} \\
&\quad\quad\quad\quad\quad\quad\quad\quad+
\frac{t^{4g(C)-2}(1-t)^{2g(C)-2}}{2(1+t)}\left(2g(C)-\frac{1}{1+t} \right) \\
&\quad\quad\quad\quad\quad\quad\quad\quad+
\frac{t^{4g(C)-2}(1-t)^{2g(C)-1}}{2(1+t)}\left(-2g(C)+1-\frac{1}{2} \right) 
\Bigg\}. 
\end{align*}
Substituting $t=1$ in 
$P_{-t}(M)/(1-t)^{2g(C)}$, 
we obtain 
$\chi(\hat{M})=2^{2g(C)-3}$. 
\end{proof}

\begin{prop}
\label{eq:GVUhigh}
The following assertions hold: 
\begin{enumerate}
\item 
$n_g(U_\varepsilon)=\begin{cases}
0 & (g \geq 4g(C)-1), \\
1 & (g=4g(C)-2), \\
0 & (g \leq 2g(C)-2). 
\end{cases}$
\item 
$n_{2g(C)-1}(U_\varepsilon)=
-\sum_{\lambda} \chi(S_{\lambda} \cap U_\varepsilon)
$, where $\lambda=(\lambda_i)$ runs over all partitions of $4g(C)-2$ such that 
$\lambda_i$ is even for every $i$. 
\end{enumerate}
\end{prop}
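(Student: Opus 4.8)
The plan is to reduce both assertions to the fibrewise contributions $n_g(\lambda)$ through the stratified identity
\[
n_g(U_\varepsilon)=\sum_{\lambda\vdash 4g(C)-2}n_g(\lambda)\cdot e(U_\varepsilon\cap S_\lambda),
\]
valid by exactly the reasoning used for Proposition \ref{prop:GVnearby}: the complex $\dR(h^0|_{U_\varepsilon})_*\IC$ is constructible with respect to the stratification $\{S_\lambda\cap U_\varepsilon\}$, the contribution of each stratum depends only on $\lambda$ (as recalled in the proof of Proposition \ref{prop:GVfiber}), and Euler characteristics are additive. All of the content then lies in locating, for each $\lambda$, the range of genera for which $n_g(\lambda)\neq 0$, together with the two extreme values.

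First I would record the relevant local geometry. Over a point of $S_\lambda$ the spectral curve $C_a\to C$ is a double cover with singularities of type $y^2=x^{\lambda_i}$; an odd part gives a ramification point (unibranch), while an even part gives one extra branch. A Riemann--Hurwitz and branch count then yield $\chi(C_a)=2\chi(C)-r(\lambda)$, where $r(\lambda)$ is the number of parts. Writing $s=q^{-1}-q$ and $g_n=[(q/a)^{n-1}P(T_{2,n})]_{a=0}=q^{n-1}p_n(s)$, the specializations computed before Proposition \ref{prop:GVfiber} give $p_1=1$, $p_2=s+s^{-1}$, $p_3=s^2+2$, $p_4=s^3+3s+s^{-1}$, $p_5=s^4+4s^2+3$; in general $p_n$ has top term $s^{n-1}$ with coefficient $1$, and bottom term $s^{-1}$ with coefficient $1$ when $n$ is even, respectively a strictly positive constant when $n$ is odd. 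Substituting the value of $\chi(C_a)$ and these factors into \eqref{Hilb_Euler_formula} and cancelling the common power of $q$ gives the clean identity
\[
\sum_{g}(-1)^g n_g(\lambda)\,s^{2g-2}=s^{4g(C)-4+r(\lambda)}\prod_{i} p_{\lambda_i}(s).
\]

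The remainder is a reading-off of the top and bottom $s$-degrees of the right-hand side. The top degree is $s^{8g(C)-6}=s^{2(4g(C)-2)-2}$ with coefficient $1$; since $4g(C)-2$ is even this gives $n_g(\lambda)=0$ for $g>4g(C)-2$ and $n_{4g(C)-2}(\lambda)=1$ for every $\lambda$. Summing over $\lambda$ yields the vanishing for $g\geq 4g(C)-1$, and since $\sum_\lambda e(U_\varepsilon\cap S_\lambda)=e(U_\varepsilon)=1$ (as $U_\varepsilon$ is an affine hyperplane, hence contractible), it yields $n_{4g(C)-2}(U_\varepsilon)=1$. For the bottom degree, let $o(\lambda)$ denote the number of odd parts; the lowest term is $s^{4g(C)-4+o(\lambda)}$. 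Because $|\lambda|=4g(C)-2$ is even, $o(\lambda)$ is even, so this term sits in genus $2g(C)-1+o(\lambda)/2\geq 2g(C)-1$. Hence $n_g(\lambda)=0$ for $g\leq 2g(C)-2$ for all $\lambda$, which gives assertion (1). Finally the genus $2g(C)-1$ term is present exactly when $o(\lambda)=0$, i.e. when every part of $\lambda$ is even; in that case each factor contributes bottom term $s^{-1}$ with coefficient $1$, so the bottom coefficient of $\prod_i p_{\lambda_i}$ is $1$, and the sign $(-1)^{2g(C)-1}=-1$ forces $n_{2g(C)-1}(\lambda)=-1$. Summing over strata gives assertion (2).

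The genuinely delicate point is the bottom-degree bookkeeping: one must check that every even part contributes bottom term precisely $s^{-1}$ with coefficient $1$, so that $\prod_i p_{\lambda_i}$ has bottom coefficient exactly $1$ with no cancellation, and one must use the parity observation that $o(\lambda)$ is even, which is what pins the first nonvanishing genus to $2g(C)-1$ precisely for the all-even partitions. The reduction to fibrewise contributions and the generating-function identity are inherited essentially verbatim from the genus-two analysis; in particular no case-by-case evaluation of the $e(U_\varepsilon\cap S_\lambda)$ is needed here, since assertion (1) uses only their total and assertion (2) leaves them as an explicit sum.
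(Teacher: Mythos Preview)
Your proof is correct and follows essentially the same approach as the paper's: both arguments analyze the top and bottom $s=(q^{-1}-q)$-degrees of the fibrewise generating function via the HOMFLY recursion, observing that the top degree is always $8g(C)-6$ with coefficient $1$ and the bottom degree is $4g(C)-4+o(\lambda)$. Your write-up is somewhat more explicit than the paper's in two places --- you spell out the stratified identity $n_g(U_\varepsilon)=\sum_\lambda n_g(\lambda)\,e(U_\varepsilon\cap S_\lambda)$ and the fact that $e(U_\varepsilon)=1$ (needed to conclude $n_{4g(C)-2}(U_\varepsilon)=1$), and you are careful to note that for odd $\lambda_i$ the bottom coefficient of $p_{\lambda_i}$ is merely positive rather than $1$, reserving the coefficient-$1$ claim for the all-even case where it is actually needed --- but the logic is the same.
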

\begin{proof}
Given a partition $\lambda$ of $4g(C)-2$ 
and an element $a \in S_{\lambda} \cap U_\varepsilon$, 
let $C_a \to C$ be the corresponding spectral curve. 
In view of the recursion formula \eqref{eq:HOMFLYdef} computing 
$P(T_{2, i})$ and the formula \eqref{Hilb_Euler_formula} for $\chi(\Hilb^n(C_a))$, 
we see that 
the polynomial $\int_{C_a^{[*]}}q^{2l}d \chi$ 
has degree 
\[
-2\chi(C)-\deg(L^{\otimes 2}) 
=8g(C)-6 
\]
and the leading coefficient is $1$. 
Hence, we get 
\[
n_g(U_\varepsilon)=\begin{cases}
0 & (g \geq 4g(C)-1), \\
1 & (g=4g(C)-2).
\end{cases}
\]

On the other hand, if we write 
$\int_{C_a^{[*]}}q^{2l}d \chi$ as a polynomial in $(q^{-1}-q)$, 
the lowest term has degree
\begin{equation}
\label{eq:lowestdeg}
-2\chi(C) 
+\# \{i \colon \lambda_i \text{ is odd}\}
\end{equation}
and coefficient $1$. 
Among all the partitions 
$\lambda=(\lambda_i)$, 
(\ref{eq:lowestdeg}) achieves its minimum 
if and only if every $\lambda_i$ is even. 
Hence the remaining assertions also hold. 
\end{proof}

\begin{thm}
Let $L \in \Pic^{2g(C)-1}(C)$ and 
$[N] \in \bP \mH^0(L^{\otimes 2})$ 
be arbitrary. 
Then we have 
\[
n_{g, 2[C]}(\Tot_C(N))=\begin{cases}
0 & (g \geq 4g(C)-2), \\
-2^{2g(C)-3} & (g=g(C)), \\
0 & (g \leq g(C)-1). 
\end{cases}
\]
In particular, the GV/GW correspondence holds for the above range of $g$. 
\end{thm}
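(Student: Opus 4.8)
The plan is to obtain this statement as a direct corollary of the decomposition in Proposition~\ref{prop:GV=L+U}, namely
\[
n_{g, 2[C]}(\Tot_C(N)) = n_{g, 2[C]}(\Tot_C(L)) + n_g(U_\varepsilon),
\]
and then to substitute the values already established in Propositions~\ref{eq:GVLhigh} and~\ref{eq:GVUhigh}. Both of those propositions hold for an arbitrary line bundle $L$ of degree $2g(C)-1$ and an arbitrary extension class $[N]$, so no genericity hypothesis is required in the ranges at issue.

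First I would dispose of the top range. For $g \geq 4g(C)-1$ both summands vanish, by the first cases of Propositions~\ref{eq:GVLhigh} and~\ref{eq:GVUhigh}. At the boundary value $g = 4g(C)-2$ the twisted-Higgs contribution is $-1$ while the nearby-hyperplane contribution is $+1$, so the two cancel. Together these establish $n_{g, 2[C]}(\Tot_C(N)) = 0$ for all $g \geq 4g(C)-2$.

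Next I would treat the remaining ranges, where the key point is that the nearby-hyperplane term contributes nothing. Since $g(C) \geq 3$ we have $g(C) \leq 2g(C)-2$, so the value $g = g(C)$ lies in the regime where part~(1) of Proposition~\ref{eq:GVUhigh} forces $n_{g(C)}(U_\varepsilon) = 0$; hence $n_{g(C), 2[C]}(\Tot_C(N)) = n_{g(C), 2[C]}(\Tot_C(L)) = -2^{2g(C)-3}$ by Proposition~\ref{eq:GVLhigh}. For $g \leq g(C)-1$ both terms vanish (the second again because $g \leq g(C)-1 \leq 2g(C)-2$), yielding $0$. The final clause asserting the GV/GW correspondence in these ranges is then verified by comparing these integers against the Gromov--Witten invariants recalled in Appendix~\ref{sec:GW}.

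I do not anticipate a genuine obstacle in this argument itself, since all the hard work has been front-loaded into Propositions~\ref{eq:GVLhigh} and~\ref{eq:GVUhigh}. The single conceptual point worth flagging is \emph{why} the delicate part~(2) of Proposition~\ref{eq:GVUhigh} --- the stratum-by-stratum Euler characteristic sum, which is the only place a difficult geometric computation or a genericity assumption would enter --- is never invoked. The reason is that its genus $g = 2g(C)-1$ falls strictly inside the gap left open by the three cases of the theorem: for $g(C) \geq 3$ one has $g(C) < 2g(C)-1 < 4g(C)-2$, so the statement simply declines to assert a value there. Recognising that this hard intermediate genus can be sidestepped is the essential idea making the higher-genus result accessible without reproducing the full stratification analysis of the genus-two case.
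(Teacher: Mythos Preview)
Your proposal is correct and follows essentially the same approach as the paper: combine the decomposition of Proposition~\ref{prop:GV=L+U} with the values supplied by Propositions~\ref{eq:GVLhigh} and~\ref{eq:GVUhigh}, then compare with Corollary~\ref{highg_GW} for the GV/GW clause. Your additional remarks spelling out the case-by-case matching and explaining why part~(2) of Proposition~\ref{eq:GVUhigh} is never invoked are helpful elaborations, but do not constitute a different route.
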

\begin{proof}
The proof is the same as that of Theorem \ref{GV_N_thm}, this time combining Proposition \ref{prop:GV=L+U} with Propositions \ref{eq:GVLhigh} and \ref{eq:GVUhigh}.  The final assertion, regarding the GV/GW correspondence, follows by comparison with Corollary \ref{highg_GW}.
\end{proof}

\appendix
\section{Some results from Gromov--Witten theory} \label{sec:GW}
Let $X=\Tot_C(N)$ be a local curve, 
with $N$ a $2$-rigid bundle. 
Consider the following generating series: 
\[
Z_{\leq 2}^{\GW}(X)
=\sum_{g \geq 0} \GW_{g, 1}(X)\lambda^{2g-2}t
+\sum_{g \geq 0} \GW_{g, 2}(X)\lambda^{2g-2}t^2,
\]
where $\GW_{g, d}(X)$ denotes the GW invariants of $X=\Tot_C(N)$ 
for the curve class $d[C]$. 
We \textit{define} $n^{\GW}_{g, d}(X)$ by  the formula (\ref{eq:GVGW}). 
The goal of this appendix is to determine $n^{\GW}_{g, 2}(X)$ for some $g$, 
using the following: 

\begin{thm}[{\cite[Section 8]{bp08}}]
\label{thm:bp}
We have 
\begin{align*}
&\exp\left(
Z^{\GW}_{\leq 2}(X)
\right)
=1+\left(2\sin\left(\frac{\lambda}{2} \right) \right)^{2g(C)-2} \cdot t \\
&+\left(2\sin\left(\frac{\lambda}{2} \right) \right)^{4g(C)-4} \cdot 
\left\{
\left(
4-4\sin\left(\frac{\lambda}{2} \right)\right)^{g(C)-1}+
\left(
4+4\sin\left(\frac{\lambda}{2} \right)\right)^{g(C)-1}
\right\}t^2 \\
&+\cdots.
\end{align*}

\end{thm}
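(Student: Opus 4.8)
The plan is to prove the statement via the Gromov--Witten TQFT of Bryan--Pandharipande for local curves, specialised to the Calabi--Yau locus. First I would relate the invariants of the $2$-rigid bundle $N$ to the equivariant theory of a split model. By the rigidity results underlying \cite{bp01, bp05}, together with deformation invariance of Gromov--Witten invariants (valid for $d\leq 2$ because the stable map moduli are proper when $N$ is $2$-rigid), the numbers $\GW_{g,d}(X)$ agree with the local invariants of $\Tot_C(L_1\oplus L_2)$ with $\deg L_1+\deg L_2=2g(C)-2$, defined as the non-equivariant (anti-diagonal, $s_1=-s_2$, Calabi--Yau) limit of the $T=(\bC^*)^2$-equivariant theory. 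This already explains why the final answer depends only on $g(C)$ and not on the splitting type.

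Next I would set up the gluing formalism. The equivariant local theory assigns to a boundary circle a free module with basis indexed by partitions of $d$ (equivalently irreducible representations of the symmetric group $S_d$), and to a cobordism decorated by the two line bundles an operator; one cuts $C$ into caps, a pair of pants encoding the product of a Frobenius algebra, and handles. Each elementary piece is evaluated by $T$-equivariant virtual localisation on the corresponding total space over $\bP^1$, which expresses it through Hodge integrals over $\overline{M}_{g,n}$. The basic inputs are the genus-zero level-$(0,0)$ pair of pants, the caps, and the genus-adding operator; these are exactly the series evaluated in \cite{bp08}.

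Then I would assemble the genus $g(C)$ answer. Realising $C$ as a sphere with $g(C)$ handles, the relevant operators are simultaneously diagonalisable in the partition basis, so the disconnected degree $d$ partition function becomes a sum over $\rho\vdash d$ of products of eigenvalues raised to powers governed by the Euler characteristic. For $d=1$ only $\rho=(1)$ contributes, and the eigenvalue collapses to $\left(2\sin(\lambda/2)\right)^{2g(C)-2}$, matching the coefficient of $t$. For $d=2$ the two partitions $(2)$ and $(1,1)$ contribute the two genus-adding eigenvalues, producing the common prefactor $\left(2\sin(\lambda/2)\right)^{4g(C)-4}$ times the two summands $\left(4-4\sin(\lambda/2)\right)^{g(C)-1}$ and $\left(4+4\sin(\lambda/2)\right)^{g(C)-1}$. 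Since the gluing formalism computes disconnected invariants directly, this sum is exactly the coefficient of $t^2$ in $\exp(Z^{\GW}_{\leq 2}(X))$, as claimed.

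The main obstacle will be the exact evaluation of the elementary building blocks and the control of the Calabi--Yau/non-equivariant specialisation: one must compute the relevant Hodge integrals, identify the genus-adding eigenvalues in the two $S_2$-isotypic sectors, and track all the combinatorial normalisations (dimensions of irreducibles, trigonometric weights) so that the sectors recombine into the stated closed form. A secondary technical point is rigorously justifying the passage between the proper moduli for the $2$-rigid bundle $N$ and the equivariant split model, i.e. the deformation invariance that renders the answer independent of the splitting type.
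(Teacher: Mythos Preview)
The paper does not provide its own proof of this theorem: it is stated as a quotation from \cite[Section 8]{bp08} and used as a black box. Your proposal correctly outlines the Bryan--Pandharipande TQFT approach that underlies that reference --- reduction to the equivariant split model via rigidity and deformation invariance, the gluing formalism with operators diagonalised in the partition basis, and the identification of the two $S_2$-isotypic eigenvalues giving the degree-two term --- so there is nothing substantive to compare beyond noting that you have reconstructed the argument of the cited source rather than anything the present paper supplies.
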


\begin{cor}
Assume that $g(C)=2$. 
Then we have 
\[
n_{g, 1}^{\GW}(X)=\begin{cases}
1 & (g=2), \\
0 & (\text{otherwise}), 
\end{cases} \quad 
n_{g, 2}^{\GW}(X)=\begin{cases}
8 & (g=3), \\
-2 & (g=2), \\
0 & (\text{otherwise}). 
\end{cases} 
\]
\end{cor}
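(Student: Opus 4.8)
The plan is to invert the Bryan--Pandharipande formula of Theorem \ref{thm:bp} to recover the genus-indexed Gromov--Witten potential in degrees one and two, and then to read off the invariants $n^{\GW}_{g,d}(X)$ from their defining equation \eqref{eq:GVGW}. Throughout I abbreviate $u \coloneqq 2\sin(\lambda/2)$ and write $Z_d \coloneqq \sum_{g\geq 0}\GW_{g,d}(X)\lambda^{2g-2}$, so that $Z^{\GW}_{\leq 2}(X) = Z_1 t + Z_2 t^2$. Since the right-hand side of Theorem \ref{thm:bp} is a power series in $t$ with constant term $1$, taking formal logarithms and truncating at order $t^2$ gives
\[
Z_1 t + Z_2 t^2 + \cdots = \log\bigl(1 + A t + B t^2 + \cdots\bigr) = A t + \Bigl(B - \tfrac{1}{2}A^2\Bigr)t^2 + \cdots,
\]
where $A$ and $B$ are the $t^1$- and $t^2$-coefficients appearing in Theorem \ref{thm:bp}.

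Specializing to $g(C) = 2$, the exponents become $2g(C)-2 = 2$, $4g(C)-4 = 4$ and $g(C)-1 = 1$, so that $A = u^2$ and
\[
B = u^4\bigl\{(4 - 4\sin(\lambda/2)) + (4 + 4\sin(\lambda/2))\bigr\} = 8u^4.
\]
Hence $Z_1 = u^2$ and $Z_2 = B - \tfrac{1}{2}A^2 = 8u^4 - \tfrac{1}{2}u^4 = \tfrac{15}{2}u^4$. This is the only genuinely formula-specific input; everything else is bookkeeping inside \eqref{eq:GVGW}.

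Now I extract the invariants degree by degree. In the degree one coefficient of \eqref{eq:GVGW} only the term $(g,\beta,k) = (g,[C],1)$ contributes, giving $Z_1 = \sum_g n^{\GW}_{g,1}(X)\,u^{2g-2}$; comparing with $Z_1 = u^2$ forces $n^{\GW}_{2,1}(X) = 1$ and $n^{\GW}_{g,1}(X) = 0$ otherwise. In the degree two coefficient two families of terms contribute: the primitive terms $(g, 2[C], 1)$ contributing $\sum_g n^{\GW}_{g,2}(X)\,u^{2g-2}$, and the degree-one multiple cover $(g, [C], 2)$ contributing $\sum_g \tfrac{1}{2}n^{\GW}_{g,1}(X)\,(2\sin\lambda)^{2g-2}$. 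Using the values of $n^{\GW}_{g,1}(X)$ just found, the latter collapses to $\tfrac{1}{2}(2\sin\lambda)^2 = 2\sin^2\lambda$, which the double-angle identity $\sin^2\lambda = 4\sin^2(\lambda/2)\cos^2(\lambda/2) = u^2\bigl(1 - \tfrac{1}{4}u^2\bigr)$ rewrites as $2u^2 - \tfrac{1}{2}u^4$ in the variable $u$. Subtracting this from $Z_2$ yields
\[
\sum_g n^{\GW}_{g,2}(X)\,u^{2g-2} = \tfrac{15}{2}u^4 - \Bigl(2u^2 - \tfrac{1}{2}u^4\Bigr) = 8u^4 - 2u^2,
\]
from which $n^{\GW}_{3,2}(X) = 8$, $n^{\GW}_{2,2}(X) = -2$, and all others vanish, as claimed.

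The computation is elementary once Theorem \ref{thm:bp} is in hand, so there is no serious obstacle; the only point requiring care is the removal of the degree-one multiple-cover contribution from the degree-two coefficient, which is precisely the step that converts a statement about $2\sin(\lambda/2)$ at argument $\lambda$ into one at argument $2\lambda$ and must be re-expressed in the variable $u$ via the double-angle identity before the coefficients can be read off. Keeping all expansions truncated at order $t^2$ throughout ensures that no higher-degree Gromov--Witten data (the $+\cdots$ in Theorem \ref{thm:bp}) enters the argument.
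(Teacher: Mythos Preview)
Your proof is correct and follows essentially the same approach as the paper: both arguments extract the $t^1$- and $t^2$-coefficients from Theorem~\ref{thm:bp}, remove the degree-one multiple-cover contribution, and read off the invariants. The only cosmetic difference is that the paper works in the variable $Q=e^{i\lambda}$ and compares $t^2$-coefficients of the exponential directly, whereas you take logarithms and work in $u=2\sin(\lambda/2)$; the underlying identity $(Q-Q^{-1})^2=(Q^{1/2}-Q^{-1/2})^2\bigl((Q^{1/2}-Q^{-1/2})^2+4\bigr)$ used in the paper is exactly your double-angle computation.
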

\begin{proof}
It is immediate to read off $n_{g, 1}(X)$ 
from the formula in Theorem \ref{thm:bp}
To compute $n_{g, 2}(X)$, 
it is convenient to work with the variable 
$Q=e^{i\lambda}$. 
Then we have 
\[
\left(Q^{\frac{1}{2}}-Q^{-\frac{1}{2}}\right)^2
=-\left(2\sin\left(\frac{\lambda}{2} \right) \right)^2. 
\]
Firstly, the coefficient of $t^2$ 
in the series $\exp(Z_{\leq 2}^{\GW}(X))$ is 
\begin{align*}
&\quad-\frac{1}{2}\left(Q-Q^{-1}\right)^2
+\frac{1}{2}\left(Q^{\frac{1}{2}}-Q^{-\frac{1}{2}} \right)^4
+\sum_{g\geq 0}(-1)^{g-1}n^{\GW}_{g, 2}(X)\left(Q^{\frac{1}{2}}-Q^{-\frac{1}{2}} \right)^{2g-2} \\ 
&=-2\left(Q^{\frac{1}{2}}-Q^{-\frac{1}{2}} \right)^2
+\sum_{g\geq 0}(-1)^{g-1}n^{\GW}_{g, 2}(X)\left(Q^{\frac{1}{2}}-Q^{-\frac{1}{2}} \right)^{2g-2}. 
\end{align*}
Secondly, the coefficient of $t^2$ 
in the right hand side of the formula in Theorem \ref{thm:bp} is 
\begin{align*}
8\left(Q^{\frac{1}{2}}-Q^{-\frac{1}{2}} \right)^4. 
\end{align*}
Now, we obtain the conclusion by comparing the above two expressions. 
\end{proof}

\begin{cor}
\label{highg_GW}
Let $g(C)$ be arbitrary. 
We have 
\[
n^{\GW}_{g, 1}(X)=\begin{cases}
1 & (g=g(C)), \\
0 & (\text{otherwise}), 
\end{cases} \quad 
n^{\GW}_{g, 2}(X)=\begin{cases}
0 & (g\geq 4g(C)-2), \\
-2^{2g(C)-3} & (g=g(C)), \\
0 & (g \leq g(C)-1). 
\end{cases} 
\]
\end{cor}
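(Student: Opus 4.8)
The plan is to extract the invariants $n^{\GW}_{g,d}(X)$ from the Bryan--Pandharipande formula of Theorem~\ref{thm:bp} exactly as in the preceding genus two corollary, comparing coefficients after passing to the variable $Q=e^{i\lambda}$. Writing $s\coloneqq Q^{1/2}-Q^{-1/2}$, I will use the identities $s^2=-(2\sin(\lambda/2))^2$ and $(Q^{1/2}+Q^{-1/2})^2=s^2+4$, so that in particular $(2\sin(\lambda/2))^{4g(C)-4}=s^{4g(C)-4}$ and $(2\sin\lambda)^{2g(C)-2}=(-1)^{g(C)-1}s^{2g(C)-2}(s^2+4)^{g(C)-1}$. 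Throughout, I let $Z_d$ denote the coefficient of $t^d$ in the free energy $Z^{\GW}_{\leq 2}(X)$, so that $Z_1=\sum_g\GW_{g,1}(X)\lambda^{2g-2}$ and $Z_2=\sum_g\GW_{g,2}(X)\lambda^{2g-2}$.

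For the degree one invariants I will observe that the coefficient of $t^1$ in $\exp(Z^{\GW}_{\leq 2}(X))$ agrees with $Z_1$, since the square and higher powers in the exponential only contribute in degree $\geq 2$. As the class $[C]$ admits no multiple covers landing in $t^1$, the defining relation (\ref{eq:GVGW}) reads $Z_1=\sum_g n^{\GW}_{g,1}(X)(2\sin(\lambda/2))^{2g-2}$, and comparison with the $t^1$-coefficient $(2\sin(\lambda/2))^{2g(C)-2}$ of Theorem~\ref{thm:bp} immediately gives $n^{\GW}_{g,1}(X)=1$ for $g=g(C)$ and $0$ otherwise.

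For the degree two invariants I will extract the coefficient of $t^2$ in $\exp(Z^{\GW}_{\leq 2}(X))$, which equals $Z_2+\tfrac12 Z_1^2$. By (\ref{eq:GVGW}), $Z_2$ is the sum of the genuine degree two term $\sum_g n^{\GW}_{g,2}(X)(2\sin(\lambda/2))^{2g-2}$ and the $k=2$ multiple cover term $\tfrac12(2\sin\lambda)^{2g(C)-2}$ of the degree one invariant just computed, while $\tfrac12 Z_1^2=\tfrac12(2\sin(\lambda/2))^{4g(C)-4}$. Passing to $s$ and equating the total with the $t^2$-coefficient of Theorem~\ref{thm:bp} (which is legitimate since that expression is even in $\sin(\lambda/2)$, hence a polynomial in $s^2$) yields the identity
\[
\sum_g(-1)^{g-1}n^{\GW}_{g,2}(X)s^{2g-2}
=\mathrm{BP}(s)-\tfrac12(-1)^{g(C)-1}s^{2g(C)-2}(s^2+4)^{g(C)-1}-\tfrac12 s^{4g(C)-4},
\]
where $\mathrm{BP}(s)$ is the $t^2$-coefficient of Theorem~\ref{thm:bp} rewritten in $s$, from which the invariants are read off term by term.

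The decisive point, which I expect to be the only content beyond bookkeeping, is a degree count in $s$ on the right hand side. The multiple cover term is the unique contribution of minimal degree $2g(C)-2$, with lowest coefficient $\tfrac12(-1)^{g(C)-1}4^{g(C)-1}$, whereas both $\tfrac12 s^{4g(C)-4}$ and $\mathrm{BP}(s)$ are supported in degrees $\geq 4g(C)-4>2g(C)-2$. Matching the coefficient of $s^{2g(C)-2}$ therefore isolates $(-1)^{g(C)-1}n^{\GW}_{g(C),2}(X)=-\tfrac12(-1)^{g(C)-1}4^{g(C)-1}$, i.e. $n^{\GW}_{g(C),2}(X)=-2^{2g(C)-3}$; the absence of any term below degree $2g(C)-2$ forces $n^{\GW}_{g,2}(X)=0$ for $g\leq g(C)-1$; and since the top degree of $\mathrm{BP}(s)$ is $4g(C)-4+2\lfloor(g(C)-1)/2\rfloor<8g(C)-6$, the coefficient of $s^{2g-2}$ vanishes for $2g-2\geq 8g(C)-6$, giving $n^{\GW}_{g,2}(X)=0$ for $g\geq 4g(C)-2$. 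The main obstacle is thus purely organisational: correctly accounting for the exponential cross term $\tfrac12 Z_1^2$ and the quadratic multiple cover contribution, and confirming that rewriting the even function of $\sin(\lambda/2)$ from Theorem~\ref{thm:bp} in terms of $s$ introduces nothing below degree $4g(C)-4$. The final assertion then follows by matching these values against the Gopakumar--Vafa computation over the stated range of $g$.
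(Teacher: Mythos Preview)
Your proposal is correct and follows essentially the same approach as the paper: both extract the $t^1$ and $t^2$ coefficients from the Bryan--Pandharipande formula after passing to the variable $Q=e^{i\lambda}$ (equivalently $s=Q^{1/2}-Q^{-1/2}$), separate out the multiple-cover and $\tfrac12 Z_1^2$ contributions, and read off the invariants by a degree count in $s$. Your write-up is in fact a bit more careful than the paper's: you keep track of the sign $(-1)^{g(C)-1}$ on the multiple-cover term (the paper's displayed $-\tfrac12(Q-Q^{-1})^{2g(C)-2}$ is literally only correct for even $g(C)$, though the stated conclusion is right), and you make the upper degree bound for $\mathrm{BP}(s)$ explicit rather than leaving it implicit. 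The closing sentence about a ``final assertion'' is extraneous here, since this corollary contains no GV/GW matching statement.
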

\begin{proof}
Similarly to the above corollary, 
we work with the variable $Q=e^{i\lambda}$. 
The coefficient of $t^2$ 
in the series $\exp(Z_{\leq 2}^{\GW}(X))$ is 
\begin{align*}
&-\frac{1}{2}\left(Q-Q^{-1}\right)^{2g(C)-2}
+\frac{1}{2}\left(Q^{\frac{1}{2}}-Q^{-\frac{1}{2}} \right)^{4g(C)-4} \\
&\quad\quad\quad\quad\quad\quad
+\sum_{g\geq 0}(-1)^{g-1}n^{\GW}_{g, 2}(X)\left(Q^{\frac{1}{2}}-Q^{-\frac{1}{2}} \right)^{2g-2}, 
\end{align*}
and 
\begin{align*}
&\quad -\frac{1}{2}\left(Q-Q^{-1}\right)^{2g(C)-2}
+\frac{1}{2}\left(Q^{\frac{1}{2}}-Q^{-\frac{1}{2}} \right)^{4g(C)-4} \\
&=-\frac{1}{2}\left\{ 
\left(Q^{\frac{1}{2}}-Q^{-\frac{1}{2}} \right)^4
+4\left(Q^{\frac{1}{2}}-Q^{-\frac{1}{2}} \right)^2
\right\}^{g(C)-1}
+\frac{1}{2}\left(Q^{\frac{1}{2}}-Q^{-\frac{1}{2}} \right)^{4g(C)-4} \\
&=-2^{2g(C)-3}\left(Q^{\frac{1}{2}}-Q^{-\frac{1}{2}} \right)^{2g(C)-2}+\cdots. 
\end{align*}
Comparing with the right hand side of the formula in Theorem \ref{thm:bp}, 
we obtain the results. 
\end{proof}

\begin{rmk}
Similarly, one can check that the GV type invariants for $\Tot_C(L)$ computed in Propositions \ref{prop:GVtwHiggs} and \ref{eq:GVLhigh} agree with the GW invariants of $\Tot_C(L \oplus (L^{-1}\otimes \omega_C))$ computed 
in \cite[Corollary 7.2]{bp08}. 
\end{rmk}

\bibliographystyle{alpha}
\bibliography{maths}

\end{document}